\documentclass[oneside,english]{amsart}
\usepackage[T1]{fontenc}
\usepackage[latin9]{inputenc}
\usepackage{babel}
\usepackage{enumitem}
\usepackage{amstext}
\usepackage{amsthm}
\usepackage{amssymb}
\usepackage{geometry}
\geometry{verbose}
\usepackage[bookmarks=true,bookmarksnumbered=false,bookmarksopen=false,
 breaklinks=true,pdfborder={0 0 1},backref=false,colorlinks=false]
 {hyperref}
\usepackage[capitalise]{cleveref}

\makeatletter


\AtBeginDocument{\providecommand\thmref[1]{\cref{thm:#1}}}
\AtBeginDocument{\providecommand\defref[1]{\cref{def:#1}}}
\AtBeginDocument{\providecommand\figref[1]{\cref{fig:#1}}}
\AtBeginDocument{\providecommand\exaref[1]{\cref{exa:#1}}}
\AtBeginDocument{\providecommand\propref[1]{\cref{prop:#1}}}
\AtBeginDocument{\providecommand\lemref[1]{\cref{lem:#1}}}
\AtBeginDocument{\providecommand\corref[1]{\cref{cor:#1}}}

\numberwithin{equation}{section}
\numberwithin{figure}{section}
\theoremstyle{plain}
\newtheorem{thm}{\protect\theoremname}[section]
\theoremstyle{definition}
\newtheorem{defn}[thm]{\protect\definitionname}
\theoremstyle{plain}
\newtheorem{prop}[thm]{\protect\propositionname}
\ifx\proof\undefined
\newenvironment{proof}[1][\protect\proofname]{\par
	\normalfont\topsep6\p@\@plus6\p@\relax
	\trivlist
	\itemindent\parindent
	\item[\hskip\labelsep\scshape #1]\ignorespaces
}{%
	\endtrivlist\@endpefalse
}
\providecommand{\proofname}{Proof}
\fi
\theoremstyle{remark}
\newtheorem{rem}[thm]{\protect\remarkname}
\theoremstyle{definition}
\newtheorem{example}[thm]{\protect\examplename}
\theoremstyle{plain}
\newtheorem{lem}[thm]{\protect\lemmaname}
\theoremstyle{plain}
\newtheorem{cor}[thm]{\protect\corollaryname}

\usepackage{tikz}
\usetikzlibrary{arrows.meta}


\makeatother

\providecommand{\corollaryname}{Corollary}
\providecommand{\definitionname}{Definition}
\providecommand{\examplename}{Example}
\providecommand{\lemmaname}{Lemma}
\providecommand{\propositionname}{Proposition}
\providecommand{\remarkname}{Remark}
\providecommand{\theoremname}{Theorem}

\begin{document}
\title{Ordinal graphs and their $\mathrm{C}^{*}$-algebras}
\author{Benjamin Jones}
\date{January 19, 2025}
\address{School of Mathematical and Statistical Sciences, Arizona State University,
Tempe, Arizona 85287}
\email{brjone16@asu.edu}
\subjclass[2000]{Primary 46L05}
\keywords{Ordinal, Graph, Cuntz-Pimsner algebra, $\mathrm{C}^{*}$-correspondence}
\begin{abstract}
We introduce a class of left cancellative categories we call ordinal
graphs for which there is a functor $d:\Lambda\rightarrow\mathrm{Ord}$
by which morphisms of $\Lambda$ factor. We use generators and relations
to study the Cuntz-Krieger algebra $\mathcal{O}\left(\Lambda\right)$
defined by Spielberg. In particular, we construct a $\mathrm{C}^{*}$-correspondence
$X_{\alpha}$ for each $\alpha\in\mathrm{Ord}$ in order to apply
Ery\"uzl\"u and Tomforde's condition (S) and prove a Cuntz-Krieger
uniqueness theorem for ordinal graphs.
\end{abstract}

\maketitle

\section{Introduction}

A directed graph $E$ is a collection $\left(E^{0},E^{1},r,s\right)$
where $E^{0}$, $E^{1}$ are countable, discrete sets and $r,s:E^{1}\rightarrow E^{0}$.
To each directed graph $E$ there is an associated $\mathrm{C}^{*}$-algebra
$C^{*}\left(E\right)$. This algebra is defined using generators and
relations, as in \cite[Chapter 5]{GRAPHALGS}. Namely, $C^{*}\left(E\right)$
is the $\mathrm{C}^{*}$-algebra which is universal for mutually orthogonal
projections $\left\{ P_{v}:v\in E^{0}\right\} $ and partial isometries
$\left\{ S_{e}:e\in E^{1}\right\} $ with mutually orthogonal ranges
such that for $v\in E^{0}$ and $e\in E^{1}$,
\begin{enumerate}
\item $S_{e}^{*}S_{e}=P_{s(e)}$
\item $P_{r(e)}S_{e}S_{e}^{*}=S_{e}S_{e}^{*}$
\item $P_{v}=\sum_{f\in r^{-1}\left(v\right)}S_{f}S_{f}^{*}$ if $0<\left|r^{-1}\left(v\right)\right|<\infty$
\end{enumerate}
Regarding $\mathbb{N}$ as a monoid under addition, one may equivalently
define a graph as a small category $\Lambda$ with a functor $d:\Lambda\rightarrow\mathbb{N}$
with the following factorization property: for every $e\in\Lambda$
and $n\leq d\left(e\right)$ there are unique $f,g\in\Lambda$ with
$d\left(f\right)=n$ and $e=fg$. This enables many generalizations
of the construction of the algebra $C^{*}\left(E\right)$ above. In
\cite{KGRAPH}, Kumjian and Pask replace $\mathbb{N}$ with the monoid
$\mathbb{N}^{k}$ for some $k\in\mathbb{N}$ to define $k$-graphs.
Then a similar set of generators and relations yields an algebra $C^{*}\left(\Lambda\right)$
which recovers $C^{*}\left(E\right)$ in the case $k=1$.

More recently, Brown and Yetter in \cite{CONDUCHEFIBRATIONS} replace
$\mathbb{N}$ with a category $\mathcal{B}$ and use groupoids to
derive conditions for Cuntz-Krieger uniqueness. In particular, they
obtain Cuntz-Krieger uniqueness when the factorization satisfies certain
conditions and $\mathcal{B}$ is right-cancellative. Lydia de Wolf
further develops this theory and gives examples of this construction
when $\mathcal{B}=\mathbb{N}_{0}^{\mathbb{N}}$ and $\mathcal{B}=\mathbb{Q}_{\geq0}$
in \cite{LYDIADEWOLF}. One may also consider $C^{*}\left(\Lambda\right)$
for a category $\Lambda$ when $\Lambda$ has no such functor $d$
\cite{LCSC}. All that is required in this construction is a small
category $\Lambda$ which is left-cancellative, yet in the case that
$\Lambda$ is a directed graph or $k$-graph, the usual $\mathrm{C}^{*}$-algebra
is recovered.

In this paper, we consider the case in which there is a functor $d:\Lambda\rightarrow\mathrm{Ord}$
with the factorization property, where $\mathrm{Ord}$ denotes the
ordinals. We regard the ordinals as a monoid under ordinal addition
and call the category $\Lambda$ with this functor an ordinal graph.
The natural numbers are a submonoid of $\mathrm{Ord}$, which makes
ordinal graphs a generalization of directed graphs. We use generators
and relations from \cite{LCSC} to analyze $C^{*}\left(\Lambda\right)=\mathcal{O}\left(\Lambda\right)$.
While addition on $\mathbb{N}$ is abelian and cancellative, addition
on $\mathrm{Ord}$ is neither abelian nor right-cancellative, and
this has consequences for the types of relations we may represent
with ordinal graphs. Instead of groupoids, we adopt an approach using
$\mathrm{C}^{*}$-correpsondences and apply results from \cite{CKU4CPA}
to recover a Cuntz-Krieger uniqueness result.

\section{Preliminaries}

If $\left(X,\phi\right)$ is a $\mathrm{C}^{*}$-correspondence over
$A$, we write $a\cdot x$ for $\phi\left(a\right)x$. For each such
correspondence there is a closed ideal $J_{X}\vartriangleleft A$
with $J_{X}=\left(\ker\phi\right)^{\perp}\cap\phi^{-1}\left(\mathcal{K}\left(X\right)\right)$
and an associated Cuntz-Pimsner algebra $\mathcal{O}\left(X\right)$
obtained as the universal $\mathrm{C}^{*}$-algebra for covariant
representations \cite{KATIDEAL}. Maps $\psi:X\rightarrow\mathcal{C}$
and $\pi:A\rightarrow\mathcal{C}$ into a $\mathrm{C}^{*}$-algebra
$\mathcal{C}$ form a representation $\left(\psi,\pi\right)$ when
$\psi$ is $A$-linear, $\pi$ is a {*}-homomorphism, $\psi\left(x\cdot a\right)=\psi\left(x\right)\pi\left(a\right)$,
$\psi\left(x\right)^{*}\psi\left(y\right)=\pi\left(\left\langle x,y\right\rangle \right)$,
and $\psi\left(a\cdot x\right)=\pi\left(a\right)\psi\left(x\right)$.
Associated to each representation $\left(\psi,\pi\right)$ is a {*}-homomorphism
$\left(\psi,\pi\right)^{\left(1\right)}:\mathcal{K}\left(X\right)\rightarrow\mathcal{C}$
where $\left(\psi,\pi\right)^{\left(1\right)}\left(\theta_{x,y}\right)=\psi\left(x\right)\psi\left(y\right)^{*}$.
Then the representation $\left(\psi,\pi\right)$ is covariant if for
every $a\in J_{X}$, $\left(\psi,\pi\right)^{\left(1\right)}\left(\phi\left(a\right)\right)=\pi\left(a\right)$.
For each correspondence $\left(X,\phi\right)$ over $A$, one can
construct correspondences $\left(X^{\otimes n},\phi_{n}\right)$ over
$A$, where $X^{\otimes n}$ is the $n$-fold internal tensor product
of $X$ over $A$. Given a representation $\left(\psi,\pi\right)$
of $\left(X,\phi\right)$, there exists representations $\left(\psi^{\otimes n},\pi\right)$
of $\left(X^{\otimes n},\phi_{n}\right)$ \cite[Proposition 1.8]{TOPALG}.

Throughout the paper we will make use of ordinal arithmetic. If $\leq$
is a partial order on a set $A$, we say $\leq$ well-orders $A$
if for all $E\subseteq A$ with $E\not=\emptyset$, there exists $x\in E$
with $x=\min E$. An order isomorphism between two partially ordered
sets is an increasing bijection $f:A\rightarrow B$ with an increasing
inverse. An ordinal is an order isomorphism class of well-ordered
sets. We denote the order isomorphism class of a well-ordered set
$A$ as $\left[A\right]$.

Ordinal addition is defined so that $\left[A\right]+\left[B\right]=\left[A\sqcup B\right]$
where $A\sqcup B$ is well-ordered by a relation $\leq$ which agrees
with the order on $A$ and $B$ such that $a\leq b$ for all $a\in A$
and $b\in B$. Addition induces a well order on $\text{Ord}$, where
for $\alpha,\beta\in\text{Ord}$, $\alpha\leq\beta$ if and only if
there is $\gamma\in\text{Ord}$ such that $\alpha+\gamma=\beta$.
Similarly, ordinal multiplication is defined by $\left[A\right]\cdot\left[B\right]=\left[A\times B\right]$
where $\left(a,b\right)\leq\left(c,d\right)$ if $b<d$, or if $b=d$
and $a\leq c$. For ordinals $\left[A\right]$ and $\left[B\right]$,
exponentiation $\left[A\right]^{\left[B\right]}$ is defined to be
$\left[C\right]$, where
\[
C=\left\{ f\in A^{B}:B\backslash f^{-1}\left(\min A\right)\text{ is finite}\right\} 
\]
Here $A^{B}$ denotes the set of functions from $B$ to $A$. Then
$C$ is well-ordered, so that for distinct $f,g\in C$ and $x=\max\left\{ b\in B:f\left(b\right)\not=g\left(b\right)\right\} $,
$f<g$ if and only if $f\left(x\right)<g\left(x\right)$ \cite[Chapter XIV]{CARDINALORDINAL}.

If $A$ is a finite well-ordered set, we say $\left[A\right]$ is
finite. Every finite well-ordered set is order isomorphic to a bounded
interval in $\mathbb{N}$, and for $n\in\mathbb{N}$, we identify
the ordinal $\left[\left\{ k\in\mathbb{N}:k<n\right\} \right]$ with
$n$. Furthermore, arithmetic for finite ordinals agrees with arithmetic
on $\mathbb{N}$. We define $\omega=\left[\mathbb{N}\right]$, from
which we see $\mathbb{N}$ is order isomorphic to $\left\{ \alpha\in\text{Ord}:\alpha<\omega\right\} =\left[0,\omega\right)$. 

While ordinal arithmetic and arithmetic on $\mathbb{N}$ share many
useful properties, for infinite ordinals $\alpha,\beta\in\mathrm{Ord}$,
it is no longer necessary that $\alpha+\beta=\beta+\alpha$ or $\alpha\cdot\beta=\beta\cdot\alpha$.
However, we still have $\alpha\cdot\left(\beta+\gamma\right)=\alpha\cdot\beta+\alpha\cdot\gamma$.
We also have a division algorithm for $\mathrm{Ord}$: if $\gamma<\alpha\cdot\beta$,
there exist unique $\beta_{1}<\beta$ and $\alpha_{1}<\alpha$ such
that $\gamma=\alpha\cdot\beta_{1}+\alpha_{1}$. Base expansion in
terms of infinite bases makes sense for ordinals by the following
theorem.
\begin{thm}[{\cite[Chapter XIV 19.3]{CARDINALORDINAL}}]
\label{thm:base-expansion}For every $\alpha>0$ and $\beta>1$ there
exists unique $k\in\mathbb{N}$ such that $\alpha$ may be represented
uniquely as
\[
\alpha=\beta^{\alpha_{1}}\cdot\gamma_{1}+\beta^{\alpha_{2}}\cdot\gamma_{2}+\ldots+\beta^{\alpha_{k}}\cdot\gamma_{k}
\]
where $\alpha_{n}\geq\alpha_{n+1}$ and $0<\gamma_{n}<\beta$.
\end{thm}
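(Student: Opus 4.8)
The plan is to prove both existence and uniqueness by transfinite induction on $\alpha$, the engine being the division algorithm recalled above together with the standard monotonicity and continuity properties of ordinal exponentiation. Before starting I would isolate the facts about the map $\delta\mapsto\beta^{\delta}$ (for a fixed $\beta>1$) that the argument needs: that it is strictly increasing, that it satisfies $\beta^{\delta+1}=\beta^{\delta}\cdot\beta$, that it is continuous at limits so that $\beta^{\lambda}=\sup_{\delta<\lambda}\beta^{\delta}$, and consequently that $\delta\le\beta^{\delta}$ for every $\delta$. Each of these follows from the definition of exponentiation given above by a short transfinite induction (or may be quoted from \cite{CARDINALORDINAL}); I would also use, without further comment, that left addition and left multiplication by a fixed ordinal are monotone in the right-hand argument (strictly so in the positive case), which is visible from the definitions of $+$ and $\cdot$ in terms of orders on $A\sqcup B$ and $A\times B$.

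For existence, fix $\alpha>0$. Since $\beta^{0}=1\le\alpha$ and, by $\delta\le\beta^{\delta}$, the class $\{\delta:\beta^{\delta}\le\alpha\}$ is a nonempty set of ordinals $\le\alpha$, it has a largest element $\alpha_{1}$; the continuity of exponentiation at limits is what guarantees the supremum of this set lies in it. Maximality of $\alpha_{1}$ gives $\alpha<\beta^{\alpha_{1}+1}=\beta^{\alpha_{1}}\cdot\beta$, so the division algorithm, applied with dividend $\alpha$ and divisor $\beta^{\alpha_{1}}$, produces unique $\gamma_{1}<\beta$ and $r<\beta^{\alpha_{1}}$ with $\alpha=\beta^{\alpha_{1}}\cdot\gamma_{1}+r$; since $\alpha\ge\beta^{\alpha_{1}}$ we get $\gamma_{1}\ge1$. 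If $r=0$ we are done with $k=1$. Otherwise $0<r<\beta^{\alpha_{1}}\le\alpha$, so the inductive hypothesis expands $r=\beta^{\alpha_{2}}\cdot\gamma_{2}+\cdots+\beta^{\alpha_{k}}\cdot\gamma_{k}$ with the required properties, and $\beta^{\alpha_{2}}\le r<\beta^{\alpha_{1}}$ forces $\alpha_{2}<\alpha_{1}$ by strict monotonicity; concatenating gives the expansion of $\alpha$, in fact with strictly decreasing exponents.

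For uniqueness the key point is a sublemma, proved by induction on the number of terms (an empty tail being read as $0$): if $\delta_{1}>\cdots>\delta_{m}$ and $0<\varepsilon_{i}<\beta$ then $\beta^{\delta_{1}}\le\beta^{\delta_{1}}\cdot\varepsilon_{1}+\cdots+\beta^{\delta_{m}}\cdot\varepsilon_{m}<\beta^{\delta_{1}+1}$. The lower bound is immediate; for the upper bound the tail $\beta^{\delta_{2}}\cdot\varepsilon_{2}+\cdots$ is $<\beta^{\delta_{2}+1}\le\beta^{\delta_{1}}$ by the inductive hypothesis and monotonicity, so by left monotonicity of addition the whole sum is $<\beta^{\delta_{1}}\cdot\varepsilon_{1}+\beta^{\delta_{1}}=\beta^{\delta_{1}}\cdot(\varepsilon_{1}+1)\le\beta^{\delta_{1}}\cdot\beta=\beta^{\delta_{1}+1}$, using left distributivity and $\varepsilon_{1}+1\le\beta$. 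Granting the sublemma, any expansion of $\alpha$ has leading exponent equal to the unique $\delta$ with $\beta^{\delta}\le\alpha<\beta^{\delta+1}$, so two expansions share their leading exponent $\alpha_{1}$; writing each as $\beta^{\alpha_{1}}\cdot\gamma_{1}+r$ and $\beta^{\alpha_{1}}\cdot\varepsilon_{1}+r'$ with $r,r'<\beta^{\alpha_{1}}$ (again by the sublemma), uniqueness in the division algorithm forces $\gamma_{1}=\varepsilon_{1}$ and $r=r'$; since $r=r'<\alpha$, the inductive hypothesis applied to this common tail — which is $0$ exactly when both expansions have length one — yields equality of the remaining terms and that the two expansions have the same length $k$.

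The inductive skeleton is routine; I expect the real work to be the bookkeeping with ordinal arithmetic, in particular verifying the exponentiation facts from the $A^{B}$-definition given above and being careful to apply each inequality on the correct side, since neither $+$ nor $\cdot$ is commutative on $\mathrm{Ord}$. A clean way to organize this is to first show that $\beta^{\delta}$ satisfies the recursion $\beta^{0}=1$, $\beta^{\delta+1}=\beta^{\delta}\cdot\beta$, $\beta^{\lambda}=\sup_{\delta<\lambda}\beta^{\delta}$, after which strict monotonicity, continuity, and the estimate $\delta\le\beta^{\delta}$ all drop out by transfinite induction and the remainder of the argument proceeds as above.
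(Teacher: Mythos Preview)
The paper does not prove this theorem; it is quoted from Sierpi\'nski's \emph{Cardinal and Ordinal Numbers} as background, so there is no in-paper argument to compare against. Your proof is correct and is essentially the classical one found in that reference: pick the maximal exponent via monotonicity and continuity of $\delta\mapsto\beta^{\delta}$, peel off the leading term with the division algorithm, and recurse; uniqueness then follows from the bound $\beta^{\delta_{1}}\le\sum_{i}\beta^{\delta_{i}}\cdot\varepsilon_{i}<\beta^{\delta_{1}+1}$ together with uniqueness in the division algorithm.

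One small observation worth making explicit in your write-up: your argument in fact yields \emph{strictly} decreasing exponents $\alpha_{1}>\alpha_{2}>\cdots>\alpha_{k}$, and this is what is required for uniqueness to hold---with the weak inequality $\alpha_{n}\ge\alpha_{n+1}$ as literally stated one could split $\beta^{\delta}\cdot 2$ as $\beta^{\delta}\cdot 1+\beta^{\delta}\cdot 1$. So the statement should be read with strict inequality (as it is in the cited source), and your proof matches that reading.
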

When $\beta=\omega$, \thmref{base-expansion} gives a representation
of $\alpha$ named the Cantor normal form. Since in that case the
coefficients $\gamma_{n}$ are finite, every ordinal $\alpha$ may
be written as a finite sum of powers of $\omega$.

For ordinals $\alpha,\beta$ with $\alpha<\beta$, $\omega^{\alpha}+\omega^{\beta}=\omega^{\beta}$.
Applying this fact and the Cantor normal form, it's possible to compute
arbitrary sums and products of ordinals using the algebraic properties
of ordinals we have stated here. Moreover, addition of ordinals is
left cancellative: for $\alpha,\beta,\gamma\in\mathrm{Ord}$, $\alpha+\beta=\alpha+\gamma$
implies $\beta=\gamma$. Thus we have one-sided subtraction of ordinals.
If $\alpha\leq\beta$, we write $-\alpha+\beta$ for the unique ordinal
$\gamma$ such that $\alpha+\gamma=\beta$. Additionally, we write
the expression $\delta-\alpha+\beta$ for the ordinal $\delta+\left(-\alpha+\beta\right)$.
Using this notation, we have $-\omega^{\alpha}+\omega^{\beta}=\omega^{\beta}$
when $\alpha<\beta$, and for $\alpha=0,\beta=1$, this gives $1+\omega=-1+\omega=\omega$.

\section{Ordinal Graphs}

We begin by defining ordinal graphs. The definition is almost identical
to \cite[Definition 1.1]{KGRAPH}, except the range of $d$ is $\mathrm{Ord}$
instead of $\mathbb{N}^{k}$.
\begin{defn}
\label{def:An-ordinal-graph}An \emph{ordinal graph} is a pair $\left(\Lambda,d\right)$
where $\Lambda$ is a small category and $d:\Lambda\rightarrow\mathrm{Ord}$
is a functor such that for every $e\in\Lambda$ and $\alpha\leq d(e)$,
there exist unique $f,g\in\Lambda$ with $d(f)=\alpha$ and $e=fg$.
\end{defn}
Here small category means $\Lambda$ is a \emph{set} of morphisms.
We will often suppress the functor $d$ and simply write $\Lambda$
for an ordinal graph $\left(\Lambda,d\right)$. We call $d$ the length
functor, and elements of $\Lambda$ paths. $\mathrm{Ord}$ is regarded
as a monoid under ordinal addition. Then since $d$ is a functor,
$d\left(ef\right)=d\left(e\right)+d\left(f\right)$ for paths $e,f\in\Lambda$.
We frequently refer to the existence and uniqueness of $f$ and $g$
in \defref{An-ordinal-graph} as unique factorization. 

It may happen that $d$ maps into the ordinals less than $\omega$.
Then since the range of $d$ is order isomorphic to a subset of $\mathbb{N}$,
the factorization property ensures that each path is a product of
paths of length $1$. Hence $\Lambda$ is the category of paths of
a directed graph $E$, where $E^{0}=d^{-1}\left(0\right)$ and $E^{1}=d^{-1}\left(1\right)$.
Since infinite ordinals are not finite sums of the ordinal $1$, we
do not in general have that $\Lambda$ is generated by $d^{-1}\left(1\right)$.
\begin{defn}
Let $\left(\Lambda,d\right)$ be an ordinal graph. We call $v\in\Lambda$
a \emph{vertex} if $d\left(v\right)=0$.
\end{defn}
As a consequence of uniqueness in the factorization property, we obtain
left cancellation.
\begin{prop}
\label{prop:left-cancellative}Every ordinal graph is a left cancellative
small category.
\end{prop}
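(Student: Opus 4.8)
The plan is to obtain left cancellation as an immediate consequence of the \emph{uniqueness} half of the factorization property in \defref{An-ordinal-graph}, so the argument is essentially bookkeeping. Recall that a small category $\Lambda$ is left cancellative when $ef=eg$ implies $f=g$ for every triple $e,f,g\in\Lambda$ for which the composites $ef$ and $eg$ are defined. So I would begin by fixing such a triple with $ef=eg$. The composability hypotheses force $f$ and $g$ to be composable with $e$ on the same side, so that $h:=ef=eg$ is a well-defined single morphism of $\Lambda$.

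Next I would produce the ordinal at which to factor. Since $d$ is a functor into $\mathrm{Ord}$ with its additive monoid structure, $d(h)=d(e)+d(f)$, and since $\mu\le\mu+\nu$ for all ordinals $\mu,\nu$, we get $\alpha:=d(e)\le d(h)$. Thus the factorization property applies to $h$ at the ordinal $\alpha$: there is a \emph{unique} pair $(p,q)\in\Lambda\times\Lambda$ with $d(p)=\alpha$ and $h=pq$.

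Finally I would note that both $(e,f)$ and $(e,g)$ are such pairs: $d(e)=\alpha$ by the choice of $\alpha$, and $h=ef$, $h=eg$ by the definition of $h$, with all factors in $\Lambda$. Uniqueness then yields $(e,f)=(e,g)$, hence $f=g$, which is exactly left cancellation. (As a side remark one also recovers $d(f)=d(g)$ here, consistently with left cancellativity of ordinal addition, but this is not needed for the argument.)

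There is no real obstacle: the only points needing care are the composability bookkeeping — confirming that ``$ef$ and $eg$ are defined'' is precisely what lets one read $h=ef$ and $h=eg$ as two instances of the factorization statement for the single path $h$ — and the elementary ordinal inequality $d(e)\le d(ef)$ needed to invoke the factorization property at $\alpha=d(e)$. Neither is substantive, so the proof is short.
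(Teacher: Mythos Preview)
Your proposal is correct and follows essentially the same approach as the paper: both arguments fix a common product, observe that the two given decompositions are factorizations at the ordinal $d(e)$, and invoke uniqueness from \defref{An-ordinal-graph} to conclude. The paper's version is terser (it does not spell out the inequality $d(e)\le d(ef)$ or the composability bookkeeping), but the substance is identical.
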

\begin{proof}
Let $\left(\Lambda,d\right)$ be an ordinal graph, and suppose $e=fg=fh$
for $f,g,h\in\Lambda$. Then $fg$ and $fh$ are factorizations of
$e$, so by unique factorization $g=h$.
\end{proof}
\begin{rem}
It may be tempting to assume that the same argument yields right cancellation.
This is not true, however, because the factorization of $e$ into
$fg$ in \defref{An-ordinal-graph} only allows us to control the
length of $f$. More precisely, $e$ may factor as $fg=hk$ where
$d\left(g\right)=d\left(k\right)$ and $f\not=h$. The following example
illustrates this.
\end{rem}
\begin{example}
Let $\Lambda=\left[0,\omega^{2}\right)$ and $d:\Lambda\rightarrow\mathrm{Ord}$
be the restriction of the identity function. Then $\Lambda$ is a
category with composition given by ordinal addition. This follows
because for every $\alpha,\beta\in\Lambda$, $\alpha<\omega^{2}$
and $\beta<\omega^{2}$, so $\alpha+\beta<\omega^{2}$ and $\Lambda$
is closed under composition. Unique factorization follows from ordinal
subtraction. In particular, for every $\alpha\in\Lambda$ and $\beta\leq d\left(\alpha\right)=\alpha$
there exists unique $\gamma\leq d\left(\alpha\right)$ such that $\beta+\gamma=\alpha$.
Thus $\left(\Lambda,d\right)$ forms an ordinal graph. 

\begin{figure}[h]
\begin{center}
\begin{tikzpicture}[every loop/.style={looseness=40}]
	\draw[->] node[circle, fill, inner sep=0, minimum size=4pt, label=right:$v$] (v) {} edge[in=135, out=45, loop] node[above] {$e$} ();
	\draw[->] (v) edge[in=225, out=315, loop] node[below] {$f: eee\ldots; ef=f$} ();
\end{tikzpicture}
\end{center}

\caption{\label{fig:one-loop-fig}$\Lambda=\left[0,\omega^{2}\right)$ as an
ordinal graph, where $v=0$, $e=1$, and $f=\omega$}
\end{figure}

To distinguish between paths in $\Lambda$ and elements of $\mathrm{Ord}$,
we may define $v=0\in\Lambda$, $e=1\in\Lambda$, and $f=\omega\in\Lambda$.
Recall that each ordinal can be written uniquely as a sum of powers
of $\omega$. Then $vf=ef=f$, and thus every path in $\Lambda$ is
of the form $\underbrace{f\ldots f}_{n\text{ times}}\underbrace{e\ldots e}_{m\text{ times}}v$. 

In \figref{one-loop-fig}, we write $f:eee\ldots$ to denote a path
$f$ which factors as $f=eeeg$ for some path $g$. In general, we
write $h:e_{1}e_{2}e_{3}\ldots$ for a path $h$ which for each $k\in\mathbb{N}$
factors as $h=e_{1}e_{2}\ldots e_{k}g_{k}$ for some paths $g_{k}$
and satisfies
\[
d\left(h\right)=\sup_{n\in\mathbb{N}}\sum_{k=1}^{n}d\left(e_{k}\right)
\]
Indeed, in this example we have $e_{k}=e$ and $h=f$. Since $ef=f$,
the notation is redundant, but it is useful in the next example.
\end{example}
\begin{example}
\label{exa:long-path}Define a category $\Lambda$ generated by $\left\{ v_{n},e_{n},f_{n},g_{n}:n\in\mathbb{N}\right\} $
with relations:
\begin{enumerate}
\item $v_{n}=r\left(e_{n}\right)=r\left(f_{n}\right)=s\left(f_{n}\right)=r\left(g_{n}\right)$
\item $v_{n}=s\left(e_{n-1}\right)$ for $n>0$
\item $g_{n}=f_{n}e_{n}g_{n+1}$
\item $s\left(g_{0}\right)=v_{0}$
\end{enumerate}
Then $\left\{ v_{n}:n\in\mathbb{N}\right\} $ are the identity maps
of $\Lambda$. There is a functor $d:\Lambda\rightarrow\mathrm{Ord}$
with $d\left(v_{n}\right)=0$, $d\left(e_{n}\right)=d\left(f_{n}\right)=1$,
and $d\left(g_{n}\right)=\omega$. We may regard elements of $\Lambda$
as finite compositions of the generators. Then if $e,f,g\in\Lambda$
are distinct with $d\left(g\right)=\omega$, $d\left(e\right)=d\left(f\right)=1$,
and $s\left(e\right)=s\left(f\right)=r\left(g\right)$, we have $eg=fg$
only if $e=f$. This ensures $\Lambda$ has the unique factorization
property.

\begin{figure}[h]
\begin{center}
\begin{tikzpicture}[every loop/.style={looseness=40}]
	\draw (0, 0) node[circle, fill, inner sep=0, minimum size=4pt, label=below:$v_0$] (v0) {};
	\draw (2, 0) node[circle, fill, inner sep=0, minimum size=4pt, label=below:$v_1$] (v1) {};
	\draw (4, 0) node[circle, fill, inner sep=0, minimum size=4pt, label=below:$v_2$] (v2) {};
	\draw (6, 0) node (v3) {\ldots};
	\draw[->] (v0) edge[in=135, out=45, loop] node[above] {$f_0$} ();
	\draw[->] (v1) edge[in=135, out=45, loop] node[above] {$f_1$} ();
	\draw[->] (v2) edge[in=135, out=45, loop] node[above] {$f_2$} ();
	\draw[->] (v3) edge node[above] {$e_2$} (v2) (v2) edge node[above] {$e_1$} (v1) (v1) edge node[above] {$e_0$} (v0);
	\draw[->] (v0) edge[in=225, out=315] node[below] {$g_2: f_2 e_2 f_3 e_3 \dots$} (v2);
\end{tikzpicture}
\end{center}

\caption{The ordinal graph $\Lambda$ in \exaref{long-path}}
\end{figure}
\end{example}
By \propref{left-cancellative} and \cite[10.3]{LCSC}, each ordinal
graph has an associated Toeplitz $\mathrm{C}^{*}$-algebra $\mathcal{T}(\Lambda)$
and a Cuntz-Krieger $\mathrm{C}^{*}$-algebra $\mathcal{O}(\Lambda)$.
We will work with these algebras using generators and relations, which
we record later in this section.
\begin{defn}
If $\Lambda$ is an ordinal graph and $\alpha\in\mathrm{Ord}$, define
the ordinal graph $\left(\Lambda_{\alpha},d_{\alpha}\right)$ by $\Lambda_{\alpha}=\left\{ e\in\Lambda:d(e)<\omega^{\alpha}\right\} $
and $d_{\alpha}=\left.d\right|_{\Lambda_{\alpha}}$. Define $\Lambda^{\alpha}=\left\{ e\in\Lambda:d(e)=\omega^{\alpha}\right\} $.
The elements $v$ of $\Lambda_{0}$ are the \emph{vertices} of $\Lambda$.
\end{defn}
Note that $\Lambda_{0}=\Lambda^{0}$. $\Lambda_{\alpha}$ is well-defined
because whenever $\beta,\gamma<\omega^{\alpha}$, $\beta+\gamma<\omega^{\alpha}$.
From $\Lambda$, we may construct $\mathcal{T}\left(\Lambda_{\alpha}\right)$
and $\mathcal{O}\left(\Lambda_{\alpha}\right)$ for all ordinals $\alpha$.
Since $\Lambda$ is a set and $\mathrm{Ord}$ is a proper class, $\mathcal{T}\left(\Lambda_{\alpha}\right)=\mathcal{T}\left(\Lambda\right)$
and $\mathcal{O}\left(\Lambda_{\alpha}\right)=\mathcal{O}\left(\Lambda\right)$
for sufficiently large $\alpha\in\mathrm{Ord}$. Later we will see
there is much to say about the relationship between these algebras.

Since $\Lambda$ is a category, it comes equipped with source and
range maps $s,r:\Lambda\rightarrow\Lambda$. As with directed graphs,
$s\left(e\right)$ and $r\left(e\right)$ should be interpreted as
the source and range vertices of the path $e$. 
\begin{prop}
$v\in\Lambda$ is a vertex if and only if $v=s\left(e\right)$ for
some $e\in\Lambda$.
\end{prop}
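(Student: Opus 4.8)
The plan is to show that the morphisms of length $0$ are precisely the identity morphisms of the category $\Lambda$, after which the proposition is immediate: in a small category presented as a set of morphisms, the image of the source map $s\colon\Lambda\to\Lambda$ is exactly the set of identity morphisms (every identity $\iota$ equals $s(\iota)$, and $s(e)$ is always an identity), so the condition ``$v=s(e)$ for some $e$'' is the same as ``$v$ is an identity morphism.''

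For the easy direction, suppose $v=s(e)$. Then $v$ is the identity morphism at the source object of $e$, so $v=vv$, and applying the length functor gives $d(v)=d(vv)=d(v)+d(v)=d(v)+0$. Since ordinal addition is left cancellative (as recorded in the preliminaries), $d(v)=0$, so $v$ is a vertex. Note that it is left cancellativity, not the right cancellativity that fails in $\mathrm{Ord}$, that is needed here.

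For the converse, I would argue that $d(v)=0$ forces $v$ to be an identity morphism. Let $\iota$ be the identity at the range object of $v$ and $\iota'$ the identity at the source object of $v$, so that $v=\iota v=v\iota'$. These exhibit two factorizations of $v$ of the form $v=fg$ with $d(f)=0$: one with $(f,g)=(\iota,v)$, using $d(\iota)=0$, and one with $(f,g)=(v,\iota')$, using the hypothesis $d(v)=0$. Since $0\le d(v)$, the uniqueness half of the factorization property in \defref{An-ordinal-graph} forces $(\iota,v)=(v,\iota')$; in particular $v=\iota$ is an identity morphism, whence $v=s(v)$, and we may take $e=v$.

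I do not anticipate a genuine obstacle; the only point needing care is to apply unique factorization at $\alpha=0$ correctly, using that a morphism is absorbed by the identities at its two endpoints so that both $(\iota,v)$ and $(v,\iota')$ are legitimate decompositions whose first factor has length $0$.
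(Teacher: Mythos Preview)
Your proof is correct and follows essentially the same route as the paper: both directions use the idempotence of identities under composition together with left cancellativity of ordinal addition for the forward direction, and for the converse compare the two length-$0$ factorizations $v=r(v)\cdot v$ and $v=v\cdot s(v)$ via unique factorization to conclude $v=r(v)$ is an identity. The only cosmetic difference is that you explicitly frame the argument as ``length $0$ $\iff$ identity,'' whereas the paper leaves this implicit; the chain $d(v)+d(v)=d(v)+0$ is a little compressed but the intent is clear.
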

\begin{proof}
Suppose $v=s\left(e\right)$. Then $vv=s\left(e\right)s\left(e\right)=s\left(e\right)=v$,
hence $d\left(v\right)+d\left(v\right)=d\left(v\right)$. Subtracting
$d\left(v\right)$, we see $d\left(v\right)=0$. Since $s\left(r\left(e\right)\right)=r\left(e\right)$,
this argument also implies $r\left(e\right)\in\Lambda_{0}$. On the
other hand, if $d\left(v\right)=0$, then $r\left(v\right)vs\left(v\right)=v$.
Since $d\left(r\left(v\right)v\right)=d\left(r\left(v\right)\right)=0$,
unique factorization implies $v=r\left(v\right)v=r\left(v\right)$,
and $s\left(v\right)=s\left(r\left(v\right)\right)=r\left(v\right)=v$. 
\end{proof}
\begin{defn}
\label{def:factor}For $e\in\Lambda$ with $\alpha\leq d(e)$, write
$e_{\alpha}$ and $e^{\alpha}$ for the unique paths satisfying $e_{\alpha}e^{\alpha}=e$
and $d\left(e_{\alpha}\right)=\alpha$.
\end{defn}
For finite ordinals $\alpha$, $e^{\alpha}$ may be confused with
a power of $e$. We will never use exponents to denote repeated composition.
We must also take care to avoid confusing $e_{\alpha}$ with a member
of a sequence of paths $e_{1},e_{2},e_{3},\ldots$ For finite $\alpha$
and a path $e\in\Lambda$, $e_{\alpha}$ will always refer to the
factorization of $e$. If we wish to denote a sequence of paths $f_{1},f_{2},\ldots$,
then we will avoid ambiguity by not labeling any path in $\Lambda$
as $f$.

We record some calculations that apply to the notation in the previous
definition. We frequently make use of these identities.
\begin{prop}
Let $e,f\in\Lambda$ with $s\left(e\right)=r\left(f\right)$. Then
\begin{enumerate}[label=\alph*.]
\item If $\alpha\leq d\left(e\right)$, $\left(ef\right)_{\alpha}=e_{\alpha}$
and $\left(ef\right)^{\alpha}=e^{\alpha}f$.
\item If $\alpha\geq d\left(e\right)$, then $\left(ef\right)_{\alpha}=ef_{-d\left(e\right)+\alpha}$
and $\left(ef\right)^{\alpha}=f^{-d\left(e\right)+\alpha}$.
\item If $\alpha+\beta\leq d\left(e\right)$, then $\beta\leq d\left(e^{\alpha}\right)$.
Moreover, $\left(e^{\alpha}\right)^{\beta}=e^{\alpha+\beta}$ and
$\left(e^{\alpha}\right)_{\beta}=\left(e_{\alpha+\beta}\right)^{\alpha}$.
\item If $\beta\leq\alpha\leq d\left(e\right)$, $\left(e_{\alpha}\right)^{\beta}=\left(e^{\beta}\right)_{-\beta+\alpha}$
and $\left(e_{\alpha}\right)_{\beta}=e_{\beta}$.
\end{enumerate}
\end{prop}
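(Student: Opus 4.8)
The plan is to reduce all four identities to one principle: if $p\in\Lambda$, if $\gamma\le d(p)$, and if we can exhibit a factorization $p=qr$ with $d(q)=\gamma$, then uniqueness of factorization forces $p_{\gamma}=q$ and $p^{\gamma}=r$. Accordingly, for each identity I would write down an explicit factorization of the path appearing on the left-hand side, compute the length of its first factor using that $d$ is a functor together with the left-cancellation and one-sided-subtraction facts recorded in the Preliminaries, and then read the identity off. A preliminary bookkeeping step in each case is to check that every $p_{\gamma}$ and $p^{\gamma}$ that occurs is actually defined, i.e.\ that $\gamma\le d(p)$; these are all short applications of left cancellation in $\mathrm{Ord}$.

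Concretely: for (a), since $\alpha\le d(e)\le d(ef)$, substituting $e=e_{\alpha}e^{\alpha}$ gives $ef=e_{\alpha}\,(e^{\alpha}f)$ with $d(e_{\alpha})=\alpha$, so $(ef)_{\alpha}=e_{\alpha}$ and $(ef)^{\alpha}=e^{\alpha}f$. For (b) (reading the hypothesis together with the implicit requirement $\alpha\le d(ef)$ needed for the left-hand sides to make sense), I would put $\gamma=-d(e)+\alpha$; left-cancelling $d(e)$ in $d(e)+\gamma=\alpha\le d(e)+d(f)$ shows $\gamma\le d(f)$, so $f_{\gamma}$ is defined, and $ef=(ef_{\gamma})(f^{\gamma})$ with $d(ef_{\gamma})=d(e)+\gamma=\alpha$ gives both equalities. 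For (c), note $\alpha\le\alpha+\beta\le d(e)$ so $e^{\alpha}$ exists; writing $\alpha+\beta+\delta=d(e)$ and cancelling $\alpha$ yields $\beta+\delta=d(e^{\alpha})$, i.e.\ $\beta\le d(e^{\alpha})$; then $e=e_{\alpha}\,(e^{\alpha})_{\beta}\,(e^{\alpha})^{\beta}$ has first two factors of total length $\alpha+\beta$, so uniqueness of the length-$(\alpha+\beta)$ factorization of $e$ gives $e_{\alpha+\beta}=e_{\alpha}(e^{\alpha})_{\beta}$ and $e^{\alpha+\beta}=(e^{\alpha})^{\beta}$, and re-reading the first of these as the length-$\alpha$ factorization of $e_{\alpha+\beta}$ gives $(e_{\alpha+\beta})^{\alpha}=(e^{\alpha})_{\beta}$. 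For (d), with $\beta\le\alpha\le d(e)$, I would expand $e=(e_{\alpha})_{\beta}\,(e_{\alpha})^{\beta}\,e^{\alpha}$; the first factor has length $\beta$, so $e_{\beta}=(e_{\alpha})_{\beta}$ and $e^{\beta}=(e_{\alpha})^{\beta}e^{\alpha}$, and since $d((e_{\alpha})^{\beta})=-\beta+\alpha$ the latter is the length-$(-\beta+\alpha)$ factorization of $e^{\beta}$, whence $(e^{\beta})_{-\beta+\alpha}=(e_{\alpha})^{\beta}$.

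I do not expect a genuine obstacle here. The one thing to be careful about is that $\mathrm{Ord}$ is only left-cancellative, so one may never cancel on the right and the expression $-\alpha+\beta$ is legitimate only when $\alpha\le\beta$; every ``is this defined?'' check above is precisely an instance of left cancellation, and with those in hand each identity is immediate from uniqueness of factorization.
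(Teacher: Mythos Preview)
Your proposal is correct and follows essentially the same approach as the paper: in each part one exhibits an explicit factorization of the relevant path with a first factor of the required length, and then invokes uniqueness of factorization. The only cosmetic difference is in part (d), where the paper obtains $(e_{\alpha})^{\beta}=(e^{\beta})_{-\beta+\alpha}$ by specializing part (c) (with $\beta$ and $-\beta+\alpha$ in place of $\alpha$ and $\beta$), whereas you derive it directly from the factorization $e=(e_{\alpha})_{\beta}\,(e_{\alpha})^{\beta}\,e^{\alpha}$; the arguments are otherwise interchangeable, and your extra care in checking that each $p_{\gamma}$ is defined is a nice touch the paper leaves implicit.
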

\begin{proof}
For part a, suppose $\alpha\leq d\left(e\right)$. Note that $d\left(e_{\alpha}\right)=\alpha$
and $e_{\alpha}e^{\alpha}f=ef$, so by unique factorization, $\left(ef\right)_{\alpha}=e_{\alpha}$
and $\left(ef\right)^{\alpha}=e^{\alpha}f$. For b, suppose on the
other hand that $\alpha\geq d\left(e\right)$. Then $d\left(ef_{-d\left(e\right)+\alpha}\right)=d\left(e\right)+d\left(f_{-d\left(e\right)+\alpha}\right)=d\left(e\right)-d\left(e\right)+\alpha=\alpha$,
and $ef=ef_{-d\left(e\right)+\alpha}f^{-d\left(e\right)+\alpha}$.
Unique factorization once again gives the desired equalities. For
part c, assume $\alpha+\beta\leq d\left(e\right)$. Then $\alpha+\beta\leq d\left(e\right)=d\left(e_{\alpha}e^{\alpha}\right)=d\left(e_{\alpha}\right)+d\left(e^{\alpha}\right)=\alpha+d\left(e^{\alpha}\right)$.
Subtracting $\alpha$ from both sides, we see $\beta\leq d\left(e^{\alpha}\right)$.
Also, $e_{\alpha}\left(e^{\alpha}\right)_{\beta}\left(e^{\alpha}\right)^{\beta}=e=e_{\alpha+\beta}e^{\alpha+\beta}$,
and $d\left(e_{\alpha}\left(e^{\alpha}\right)_{\beta}\right)=\alpha+\beta=d\left(e_{\alpha+\beta}\right)$.
By unique factorization, $e_{\alpha}\left(e^{\alpha}\right)_{\beta}=e_{\alpha+\beta}$
and $\left(e^{\alpha}\right)^{\beta}=e^{\alpha+\beta}$. Then since
$d\left(e_{\alpha}\right)=\alpha$, $\left(e_{\alpha+\beta}\right)^{\alpha}=\left(e^{\alpha}\right)_{\beta}$.
Finally, assume for part d that $\alpha\leq\beta\leq d\left(e\right)$.
$\left(e_{\alpha}\right)^{\beta}=\left(e^{\beta}\right)_{-\beta+\alpha}$
follows from part c, and moreover $\left(e_{\alpha}\right)_{\beta}\left(e_{\alpha}\right)^{\beta}e^{\alpha}=e=e_{\beta}e^{\beta}$
with $d\left(\left(e_{\alpha}\right)_{\beta}\right)=\beta=d\left(e_{\beta}\right)$.
Hence $\left(e_{\alpha}\right)_{\beta}=e_{\beta}$ by unique factorization.

If $\Lambda$ is a left cancellative small category which is finitely
aligned in the following sense, then $\mathcal{T}\left(\Lambda\right)$
and $\mathcal{O}\left(\Lambda\right)$ may be understood in terms
of generators, relations, and the exhaustive sets of $\Lambda$. We
record the relevant definitions below.
\end{proof}
\begin{defn}[{\cite[Definition 3.2]{LCSC}}]
A left cancellative category $\Lambda$ is \emph{finitely aligned}
if for all $e,f\in\Lambda$ there exists finite $F\subseteq\Lambda$
such that $e\Lambda\cap f\Lambda=\cup_{g\in F}g\Lambda$.
\end{defn}
\begin{defn}[{\cite[Definition 10.13]{LCSC}}]
If $\Lambda$ is a left cancellative category and $v\in\Lambda$,
$F\subseteq v\Lambda$ is \emph{exhaustive} for $v$ if for all $f\in v\Lambda$
there exists $e\in F$ such that $e\Lambda\cap f\Lambda\not=\emptyset$.
\end{defn}
\begin{lem}
\label{lem:finite-align}Every ordinal graph $\Lambda$ is finitely
aligned. Moreover, if $e,f\in\Lambda$, then $e\Lambda\cap f\Lambda\in\left\{ \emptyset,e\Lambda,f\Lambda\right\} $.
\end{lem}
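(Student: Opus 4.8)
The \textbf{moreover} clause immediately yields finite alignment: given $e,f\in\Lambda$, take $F=\emptyset$ when $e\Lambda\cap f\Lambda=\emptyset$, and $F=\{e\}$ or $F=\{f\}$ in the two nonempty cases; in each case $F$ is finite and $e\Lambda\cap f\Lambda=\bigcup_{g\in F}g\Lambda$. So the whole statement reduces to proving the trichotomy $e\Lambda\cap f\Lambda\in\{\emptyset,e\Lambda,f\Lambda\}$, and we may assume the intersection is nonempty, choosing $h\in e\Lambda\cap f\Lambda$, say $h=eg=fg'$ for suitable $g,g'\in\Lambda$.

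The key point is that $d$ takes values in $\mathrm{Ord}$, which is \emph{totally} ordered, so after possibly swapping the roles of $e$ and $f$ we may assume $d(e)\le d(f)$; and since $d(h)=d(e)+d(g)=d(f)+d(g')$ we also have $d(e)\le d(f)\le d(h)$. Now $h=eg$ is a factorization of $h$ whose first factor has length $d(e)$, so unique factorization forces $h_{d(e)}=e$; likewise $h_{d(f)}=f$. I would then apply part (d) of the preceding proposition with the chain $d(e)\le d(f)\le d(h)$ (taking the path there to be $h$, $\alpha=d(f)$, $\beta=d(e)$): it gives $\left(h_{d(f)}\right)_{d(e)}=h_{d(e)}$, i.e. $f_{d(e)}=e$. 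Hence $f=f_{d(e)}f^{d(e)}=e\,f^{d(e)}\in e\Lambda$.

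From $f=e\,f^{d(e)}$ we get $f\Lambda=e\,f^{d(e)}\Lambda\subseteq e\Lambda$, so $f\Lambda\subseteq e\Lambda\cap f\Lambda\subseteq f\Lambda$, giving $e\Lambda\cap f\Lambda=f\Lambda$. In the symmetric case $d(f)\le d(e)$ the same argument yields $e\Lambda\cap f\Lambda=e\Lambda$. This establishes the trichotomy and hence finite alignment.

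I do not expect a genuine obstacle here: the total ordering of $\mathrm{Ord}$ is exactly what makes the argument collapse (this is where ordinal graphs behave better than $k$-graphs for $k\ge2$). The only place requiring care is invoking the correct factorization identity and keeping the inclusion directions straight — one must divide $h$ at the \emph{smaller} of the two lengths $d(e),d(f)$ to conclude that the shorter of $e,f$ left-divides the longer, and therefore that the intersection equals the $\Lambda$-shift of the \emph{longer} path.
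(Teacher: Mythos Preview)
Your proof is correct and follows essentially the same route as the paper's: assume without loss of generality $d(e)\le d(f)$, pick a common extension, and use unique factorization to show the shorter path left-divides the longer, whence the intersection is $f\Lambda$. The only cosmetic difference is that you invoke the identity $(h_{d(f)})_{d(e)}=h_{d(e)}$ from the factorization proposition, whereas the paper writes $f=pq$ with $d(p)=d(e)$ and applies unique factorization to the common extension directly to get $p=e$; these are the same argument in different notation.
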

\begin{proof}
Let $e,f\in\Lambda$ be given, and suppose without loss of generality
that $d(e)\leq d(f)$. Choose $p,q\in\Lambda$ such that $f=pq$ with
$d(p)=d(e)$. If $e\Lambda\cap f\Lambda=\emptyset$, then we are done.
Otherwise, let $g\in e\Lambda\cap f\Lambda$ and choose $r,s\in\Lambda$
such that $g=er=ps$. By unique factorization, $p=e$, and therefore
$e\Lambda\cap f\Lambda=e\Lambda\cap eq\Lambda=e\Lambda$.
\end{proof}
To present $\mathcal{T}\left(\Lambda\right)$ and $\mathcal{O}\left(\Lambda\right)$
in terms of generators and relations, we must study the common extensions
in $\Lambda$.
\begin{defn}[{\cite[Definition 2.4]{LCSC}}]
For a left cancellative category $\Lambda$ and $e,f\in\Lambda$,
$e\approx f$ if there is invertible $g\in\Lambda$ with $eg=f$.
\end{defn}
\begin{defn}[{\cite[Definition 3.1]{LCSC}}]
If $F$ is a subset of a left cancellative category $\Lambda$, the
\emph{common extensions }of $F$ are the elements of $\bigcap_{e\in F}e\Lambda$.
If $e$ is a common extension of $F$, $e$ is \emph{minimal} if for
all common extensions $f$ of $F$ with $e\in f\Lambda$, $e\approx f$.
$\bigvee F$ denotes the set of minimal common extensions of $F$.
For $e,f\in\Lambda$, $e\vee f$ denotes $\bigvee\{e,f\}$.
\end{defn}
\begin{lem}
\label{lem:approx-equal}If $\Lambda$ is an ordinal graph and $e,f\in\Lambda$,
$e\approx f$ if and only if $e=f$.
\end{lem}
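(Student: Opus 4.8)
The plan is to prove the two implications separately, with the nontrivial content lying in the direction $e\approx f\Rightarrow e=f$. For the reverse implication, I would simply observe that if $e=f$ then $e=e\cdot s(e)=f$, and $s(e)$ is a vertex, hence an identity morphism of the category $\Lambda$ and in particular invertible; so $e\approx f$ directly from the definition of $\approx$.

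For the forward implication, suppose $g\in\Lambda$ is invertible with $eg=f$. Since the composite $eg$ is defined we have $s(e)=r(g)$, and since $g$ is invertible there is $g^{-1}\in\Lambda$ with $gg^{-1}=r(g)$ and $g^{-1}g=s(g)$, both identity morphisms, hence both vertices. Applying the length functor $d$ and using $d(r(g))=0$ gives $d(g)+d(g^{-1})=0$; because ordinal addition of two ordinals equals $0$ only when each summand is $0$, we conclude $d(g)=0$. Hence $d(f)=d(eg)=d(e)+d(g)=d(e)$. Now I would apply the unique factorization property of \defref{An-ordinal-graph} to the path $f$ with $\alpha=d(e)$, which is legitimate since $d(e)=d(f)\le d(f)$: there are unique $p,q\in\Lambda$ with $d(p)=d(e)$ and $f=pq$. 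But $f=eg$ is such a factorization (as $d(e)=\alpha$), and $f=f\cdot s(f)$ is also such a factorization (as $d(f)=\alpha$); by uniqueness $e=p=f$ (and incidentally $g=s(f)$), which is what we wanted.

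The only step that requires any thought is the deduction $d(g)=0$ from invertibility of $g$: this is precisely where we use that the monoid $(\mathrm{Ord},+)$ has no nontrivial factorizations of its identity, unlike a genuinely group-like situation. Beyond that, the argument is a routine invocation of unique factorization, so I do not expect a real obstacle here.
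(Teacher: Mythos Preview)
Your proof is correct and follows essentially the same approach as the paper: show that an invertible $g$ has $d(g)=0$ (via $d(g)+d(g^{-1})=0$), deduce $d(e)=d(f)$, and then invoke unique factorization to conclude $e=f$. The paper's write-up differs only cosmetically, re-deriving $d(s(g))=0$ inside the proof rather than citing the earlier proposition on vertices as you do.
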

\begin{proof}
First suppose $e\approx f$. Choose invertible $g\in\Lambda$ with
$eg=f$. Then $d\left(s\left(g\right)\right)=d\left(s\left(g\right)s\left(g\right)\right)=d\left(s\left(g\right)\right)+d\left(s\left(g\right)\right)$,
hence $d\left(s\left(g\right)\right)=0$. Also, $d\left(g^{-1}g\right)=d\left(s\left(g\right)\right)=d\left(g^{-1}\right)+d\left(g\right)=0$,
so $d\left(g\right)=0$. Thus $d\left(eg\right)=d\left(e\right)+d\left(g\right)=d\left(e\right)=d\left(f\right)$,
and by unique factorization of $eg=fs\left(f\right)$, $e=f$. Clearly
if $e=f$, $es\left(e\right)=f$ and $e\approx f$.
\end{proof}
\begin{lem}
\label{lem:join-paths}If $\Lambda$ is an ordinal graph, $e,f\in\Lambda$,
and $e\Lambda\cap f\Lambda\not=\emptyset$, then either $e\in f\Lambda$
or $f\in e\Lambda$. If $e\in f\Lambda$, $e\vee f=\left\{ f\right\} $,
and if $f\in e\Lambda$, $e\vee f=\left\{ e\right\} $.
\end{lem}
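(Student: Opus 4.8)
The plan is to read off the dichotomy from (the proof of) \lemref{finite-align} and then to pin down $e\vee f$ by tracking a single containment of right ideals. For the first assertion, note the statement is symmetric in $e$ and $f$, so assume $d(e)\leq d(f)$ and factor $f=pq$ with $d(p)=d(e)$. If $e\Lambda\cap f\Lambda\neq\emptyset$, choose $g$ in the intersection and write $g=er=ps$. Since $d(p)=d(e)\leq d(g)$, both $er$ and $ps$ exhibit the unique factorization of $g$ at length $d(e)$, so $p=e$; hence $f=eq\in e\Lambda$. Thus either $f\in e\Lambda$ or $e\in f\Lambda$, which is the claimed alternative (and incidentally $e\Lambda\cap f\Lambda=eq\Lambda=f\Lambda$, consistent with \lemref{finite-align}).

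For the statement about $e\vee f$, suppose $e\in f\Lambda$, say $e=fg$. First I would observe $e\Lambda\subseteq f\Lambda$, since every $eh\in e\Lambda$ equals $f(gh)\in f\Lambda$; therefore $e\Lambda\cap f\Lambda=e\Lambda$, so by definition the common extensions of $\{e,f\}$ are exactly the elements of $e\Lambda$. Next I would check that $e$ itself is the unique minimal common extension. It is a common extension, because $e=es(e)\in e\Lambda$ and $e\in f\Lambda$ by hypothesis. It is minimal: if $y$ is a common extension with $e\in y\Lambda$, then $y\in e\Lambda$ gives $y=eh$, while $e\in y\Lambda$ gives $e=yk=ehk$; left cancelling $e$ yields $s(e)=hk$, and since ordinal addition is left cancellative $d(h)=d(k)=0$, so $h=s(e)$ and $y=e\approx e$. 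Conversely, every common extension $x$ lies in $e\Lambda$, and since $e$ is itself a common extension with $x\in e\Lambda$, minimality of $x$ would force $x\approx e$, that is $x=e$ by \lemref{approx-equal}. Hence $\bigvee\{e,f\}=\{e\}$, i.e.\ the minimal common extension of $\{e,f\}$ is the longer of the two paths; interchanging $e$ and $f$ disposes of the case $f\in e\Lambda$.

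I do not expect a genuine obstacle: once \lemref{finite-align} is in hand the argument is bookkeeping with the right ideals $e\Lambda$ and $f\Lambda$. The two points worth keeping in view are that \lemref{approx-equal} collapses $\approx$ to equality, so a minimal common extension is literally one path rather than an equivalence class, and that the reduction $d(h)=d(k)=0$ uses only left cancellativity of ordinal addition, so the failure of right cancellation in $\mathrm{Ord}$ is harmless here.
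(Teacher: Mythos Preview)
Your proof is correct and follows essentially the same route as the paper's: both obtain the dichotomy from \lemref{finite-align} (you reproduce its argument, the paper simply cites it), then identify the common extensions as $e\Lambda$ when $e\in f\Lambda$, verify that $e$ is minimal via a length count, and use \lemref{approx-equal} to conclude uniqueness. One remark: your conclusion that $e\vee f=\{e\}$ when $e\in f\Lambda$ (the longer path is the minimal common extension) agrees with the paper's \emph{proof} but not with the lemma as literally stated, which has the labels swapped; the statement contains a typo, and both you and the paper prove the correct version.
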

\begin{proof}
Let $e,f\in\Lambda$ with $e\Lambda\cap f\Lambda\not=\emptyset$.
By \lemref{finite-align}, either $e\Lambda\cap f\Lambda=e\Lambda$
or $e\Lambda\cap f\Lambda=f\Lambda$. We may assume without loss of
generality that $e\Lambda\cap f\Lambda=e\Lambda$. Hence the common
extensions of $\left\{ e,f\right\} $ are the elements of $e\Lambda$.
If $g$ is a common extension of $\left\{ e,f\right\} $ and $e\in g\Lambda$,
then $g=eh$ for some $h\in\Lambda$, and $e\in eh\Lambda$. Thus
$d\left(e\right)\geq d\left(e\right)+d\left(h\right)$, and $d\left(h\right)=0$.
Therefore $g=e$, and $e$ is minimal. If $g\in e\Lambda$ is minimal,
then since $e$ is a common extension, $e\approx g$, and by \lemref{approx-equal},
$g=e$. We conclude that $e\vee f=\left\{ e\right\} $.
\end{proof}
We apply the following theorems to obtain generators and relations
for $\mathcal{T}\left(\Lambda\right)$ and $\mathcal{O}\left(\Lambda\right)$.
\begin{thm}[{\cite[Theorem 10.15]{LCSC}, \cite[Theorem 9.7]{LCSC}}]
\label{thm:generators}Let $\Lambda$ be a countable finitely-aligned
left cancellative small category. Then $\mathcal{O}\left(\Lambda\right)$
is the $C^{*}$-algebra which is universal for generators $\left\{ T_{e}:e\in\Lambda\right\} $
and relations 1-4. $\mathcal{T}\left(\Lambda\right)$ is the $C^{*}$-algebra
which is universal for generators $\left\{ T_{e}:e\in\Lambda\right\} $
and relations 1-3.
\begin{enumerate}
\item $T_{e}^{*}T_{e}=T_{s(e)}$
\item $T_{e}T_{f}=T_{ef}$ if $s(e)=r(f)$
\item $T_{e}T_{e}^{*}T_{f}T_{f}^{*}=\bigvee_{g\in e\vee f}T_{g}T_{g}^{*}$
\item $T_{v}=\bigvee_{e\in F}T_{e}T_{e}^{*}$ if $F$ is finite and exhaustive
for $v$
\end{enumerate}
\end{thm}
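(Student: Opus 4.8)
The plan is to obtain both presentations directly from Spielberg's work, reading the statement as a reformulation, in the notation of this paper, of \cite[Theorem 9.7]{LCSC} (for $\mathcal{T}(\Lambda)$) and \cite[Theorem 10.15]{LCSC} (for $\mathcal{O}(\Lambda)$). First I would record that the standing hypotheses of \cite{LCSC} hold: $\Lambda$ is a countable, left cancellative, finitely aligned small category, which is precisely what is assumed (and, in the case of an ordinal graph, is supplied by \propref{left-cancellative} and \lemref{finite-align}). In particular the algebras $\mathcal{T}(\Lambda)$ and $\mathcal{O}(\Lambda)$, and their universal properties, are imported from \cite{LCSC} rather than constructed anew, so there is no new analytic content.

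The work is then to check that relations 1--4 are exactly the relations appearing in those theorems. Relations 1 and 2 are verbatim. For relation 3 I would confirm that $\bigvee_{g\in e\vee f}T_gT_g^*$ is a well-defined finite supremum of projections: by finite alignment there are only finitely many $\approx$-classes of minimal common extensions of $\{e,f\}$, and $\approx$-equivalent paths $g$ determine the same range projection $T_gT_g^*$, so the supremum is finite and independent of any choice of representatives (and collapses to literal equality of paths in an ordinal graph, by \lemref{approx-equal}). I would also treat the degenerate case $e\vee f=\emptyset$, where relation 3 reads $T_eT_e^*T_fT_f^*=0$, against the behaviour $e\Lambda\cap f\Lambda=\emptyset$. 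For relation 4 I would match the phrase ``$F$ finite and exhaustive for $v$'' with the covariance / foundation-set condition of \cite[Theorem 10.15]{LCSC}, noting in particular that an empty exhaustive set for $v$ forces $T_v=0$; the Toeplitz algebra is then the ``drop relation 4'' version, exactly as in \cite{LCSC}.

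I expect the only real obstacle to be bookkeeping rather than mathematics: one must verify that this finite list generates the same ideal of relations as the presentation in \cite{LCSC}, in particular that it suffices to impose relation 3 on pairs $\{e,f\}$ (so that the corresponding identity for an arbitrary finite subset follows using relation 2), and that the universality assertions transport cleanly through the change of notation. Once this dictionary between the two presentations is in place, the theorem follows with no further $C^*$-algebraic input.
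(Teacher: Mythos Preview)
Your proposal is correct and matches the paper's treatment: the theorem is stated with attribution to \cite[Theorem 9.7, Theorem 10.15]{LCSC} and no proof is given in the paper, so the result is simply imported from Spielberg's work. Your additional bookkeeping about matching the relations to those in \cite{LCSC} is reasonable commentary but goes beyond what the paper itself provides.
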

Note that by \lemref{join-paths}, relation 3 for ordinal graphs may
be rewritten as
\begin{enumerate}[label=(3)]
\item $T_{e}T_{e}^{*}T_{f}T_{f}^{*}=\begin{cases}
T_{e}T_{e}^{*} & e\in f\Lambda\\
T_{f}T_{f}^{*} & f\in e\Lambda\\
0 & \text{otherwise}
\end{cases}$
\end{enumerate}
Even when an ordinal graph $\Lambda$ is not countable, we define
$\mathcal{O}\left(\Lambda\right)$ and $\mathcal{T}\left(\Lambda\right)$
using these relations. If $\mathcal{C}$ is a $\mathrm{C}^{*}$-algebra
and $T=\left\{ T_{e}:e\in\Lambda\right\} \subseteq\mathcal{C}$ is
a family of operators satisfying relations 1-4, then we call $T$
a Cuntz-Krieger $\Lambda$-family. If instead $T$ satisfies relations
1-3, we call $T$ a Toeplitz $\Lambda$-family. Every Toeplitz $\Lambda$-family
$T\subseteq\mathcal{C}$ induces a {*}-homomorphism $\pi:\mathcal{T}\left(\Lambda\right)\rightarrow\mathcal{C}$
mapping generators to members of $T$. Likewise, each Cuntz-Krieger
family induces a {*}-homomorphism $\pi:\mathcal{O}\left(\Lambda\right)\rightarrow\mathcal{C}$.

We record a few calculations which will prove useful in the study
of these $\mathrm{C}^{*}$-algebras. 
\begin{lem}
\label{lem:basic-calcs}Let $\Lambda$ be an ordinal graph and $e,f\in\Lambda$.
The following hold in $\mathcal{T}\left(\Lambda\right)$ and $\mathcal{O}\left(\Lambda\right)$.
\begin{enumerate}[label=\alph*.]
\item $T_{e}$ is a partial isometry.
\item If $v\in\Lambda_{0}$, then $T_{v}$ is a projection.
\item If $s(e)\not=r(f)$, then $T_{e}T_{f}=0$.
\item $T_{e}^{*}T_{f}=\begin{cases}
T_{g} & f=eg\\
T_{g}^{*} & e=fg\\
0 & e\Lambda\cap f\Lambda=\emptyset
\end{cases}$
\end{enumerate}
\end{lem}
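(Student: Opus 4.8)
The plan is to derive (a)--(d) directly from relations 1--3, so that each holds in $\mathcal{T}(\Lambda)$ and $\mathcal{O}(\Lambda)$ simultaneously. I will use repeatedly that for $e\in\Lambda$ the paths $s(e)$ and $r(e)$ are vertices with $e\, s(e)=e=r(e)\, e$, so relation 2 gives $T_eT_{s(e)}=T_e$ and $T_{r(e)}T_e=T_e$. For part (a), compute $T_eT_e^*T_e=T_e\bigl(T_e^*T_e\bigr)=T_eT_{s(e)}=T_e$ using relations 1 and 2; this is the partial isometry identity, and in particular $T_eT_e^*$ and $T_e^*T_e$ are projections. For part (b), if $v\in\Lambda_0$ then $v=s(v)$, so relation 1 gives $T_v^*T_v=T_v$; taking adjoints yields $T_v^*T_v=T_v^*$, hence $T_v=T_v^*$, and then $T_vT_v=T_v^*T_v=T_v$, so $T_v$ is a projection.

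For part (c), I would first note that two distinct vertices $u,w$ satisfy $u\Lambda\cap w\Lambda=\emptyset$, since every element of $u\Lambda$ has range $u$ and every element of $w\Lambda$ has range $w$. Hence the rewritten form of relation 3 gives $T_uT_u^*T_wT_w^*=0$, and since $T_u,T_w$ are projections by (b) this reads $T_uT_w=0$. Applying this with $u=s(e)$ and $w=r(f)$ when $s(e)\neq r(f)$, and inserting the range projections, $T_eT_f=\bigl(T_eT_{s(e)}\bigr)\bigl(T_{r(f)}T_f\bigr)=T_e\bigl(T_{s(e)}T_{r(f)}\bigr)T_f=0$.

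Part (d) carries the real content, and I would split it into the three cases dictated by \lemref{join-paths}: when $e\Lambda\cap f\Lambda\neq\emptyset$, either $e\in f\Lambda$ or $f\in e\Lambda$ (and the path $g$ below is unique by \propref{left-cancellative}, so $T_g$ is well defined). If $f=eg$, then $T_e^*T_f=T_e^*T_eT_g=T_{s(e)}T_g=T_g$, since $s(e)=r(g)$. If $e=fg$, then dually $T_e^*T_f=T_g^*T_f^*T_f=T_g^*T_{s(f)}=T_g^*$, using $s(f)=r(g)$ together with the adjoint of $T_{r(g)}T_g=T_g$. The remaining case $e\Lambda\cap f\Lambda=\emptyset$ is the main obstacle: relation 3 only tells us that the projections $P=T_eT_e^*$ and $Q=T_fT_f^*$ from (a) are orthogonal, so $PQ=QP=0$, and one cannot read off $T_e^*T_f=0$ from this directly. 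The remedy is to pass to $x^*x$ for $x=T_e^*T_f$: using $QT_f=T_f$ and $T_f^*Q=T_f^*$ one gets $x^*x=T_f^*PT_f=T_f^*\bigl(QPQ\bigr)T_f=0$, whence $x=0$. Everything else reduces to routine bookkeeping with relations 1--3, \lemref{join-paths}, and the vertex identities above.
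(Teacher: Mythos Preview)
Your proof is correct and follows essentially the same approach as the paper: parts (a)--(c) are identical, and in part (d) both arguments reduce the case $e\Lambda\cap f\Lambda=\emptyset$ to the orthogonality $T_eT_e^*T_fT_f^*=0$ combined with the partial isometry identities, the only cosmetic difference being that the paper multiplies this relation on the left by $T_e^*$ and on the right by $T_f$ directly, whereas you compute $x^*x$ for $x=T_e^*T_f$.
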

\begin{proof}
For part a, notice that $s(e)=s(s(e))$, so by relation 2, $T_{e}T_{s(e)}=T_{es(e)}=T_{e}$.
Then by relation 1, $T_{e}T_{e}^{*}T_{e}=T_{e}T_{s(e)}=T_{e}$, and
thus $T_{e}$ is a partial isometry. For part b, we have $s(v)=v$,
and relation 1 implies $T_{v}^{*}T_{v}=T_{v}$. To prove part c, suppose
$s(e)\not=r(f)$. Then $s(e)\Lambda\cap r(f)\Lambda=\emptyset$, and
by relation 3, $T_{s(e)}T_{s(e)}^{*}T_{r(f)}T_{r(f)}^{*}=0$. Since
$T_{s(e)}$ and $T_{r(f)}$ are projections, this is equivalent to
$T_{s(e)}T_{r(f)}=0$. Now relation 2 implies $T_{e}T_{f}=T_{e}T_{s(e)}T_{r(f)}T_{f}=0$.
Finally, for d notice that if $f=eg$, $d(f)\geq d(g)$. By taking
adjoints if needed, we may assume without loss of generality that
$d(f)\geq d(g)$. Then either $f\in e\Lambda$ or $e\Lambda\cap f\Lambda=\emptyset$.
First suppose $f=eg$, and note $T_{e}^{*}T_{f}=T_{e}^{*}T_{e}T_{g}=T_{s(e)}T_{g}$,
which by relation 2 implies $T_{e}^{*}T_{f}=T_{g}$. Now assume $e\Lambda\cap f\Lambda=0$.
Then by relation 3, $T_{e}T_{e}^{*}T_{f}T_{f}^{*}=0$. Multiplying
by $T_{f}$ on the right and by $T_{e}^{*}$ on the left, we see $T_{e}^{*}T_{e}T_{e}^{*}T_{f}T_{f}^{*}T_{f}=T_{e}^{*}T_{f}=0$,
as desired.
\end{proof}
\begin{rem}
In the proof of the previous lemma, we did not use the full power
of relation 3, only that $T_{e}T_{e}^{*}T_{f}T_{f}^{*}=0$ when $e\Lambda\cap f\Lambda=\emptyset$.
In fact, when $f=eg$, the identities above give $T_{e}T_{e}^{*}T_{f}T_{f}^{*}=T_{e}T_{g}T_{f}^{*}=T_{f}T_{f}^{*}$,
so it is enough to use the weaker form of relation 3 by only requiring
the range projections of $T_{e}$ and $T_{f}$ to be orthogonal when
$e\Lambda\cap f\Lambda=\emptyset$.

An important consequence of parts c and d in the previous result is
that every word composed of the generators $\left\{ T_{e}:e\in\Lambda\right\} $
of $\mathcal{O}\left(\Lambda\right)$ and their adjoints is of the
form $T_{e}T_{f}^{*}$.
\end{rem}
\begin{cor}
\label{cor:gens-word}If $\Lambda$ is an ordinal graph and $\mathcal{O}\left(\Lambda\right)$
has generators $\left\{ T_{e}:e\in\Lambda\right\} $, then
\[
\mathcal{O}\left(\Lambda\right)=\overline{\text{\ensuremath{\mathrm{span}}}}\left\{ T_{e}T_{f}^{*}:e,f\in\Lambda\right\} 
\]
\end{cor}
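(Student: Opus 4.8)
The plan is to exhibit the set $\mathcal{S}:=\{0\}\cup\{T_{e}T_{f}^{*}:e,f\in\Lambda\}$ as a self-adjoint, multiplicatively closed subset of $\mathcal{O}(\Lambda)$, so that $\overline{\mathrm{span}}\,\mathcal{S}$ is automatically a $\mathrm{C}^{*}$-subalgebra (it is norm-closed by fiat, a subspace as a span, closed under products by bilinearity and continuity of multiplication, and self-adjoint since $\mathcal{S}$ is). By \thmref{generators}, $\mathcal{O}(\Lambda)$ is universal for the family $\{T_{e}:e\in\Lambda\}$ and in particular is generated as a $\mathrm{C}^{*}$-algebra by it; hence it suffices to verify that $\overline{\mathrm{span}}\,\mathcal{S}$ contains every $T_{e}$, the inclusion $\overline{\mathrm{span}}\,\mathcal{S}\subseteq\mathcal{O}(\Lambda)$ being trivial.

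For the generators I would note that $s(e)$ is a vertex, so $T_{s(e)}$ is a projection by \lemref{basic-calcs}(b), and relation 2 gives $T_{e}=T_{es(e)}=T_{e}T_{s(e)}=T_{e}T_{s(e)}^{*}\in\mathcal{S}$; taking adjoints shows $T_{e}^{*}=T_{s(e)}T_{e}^{*}\in\mathcal{S}$ as well. Self-adjointness of $\mathcal{S}$ is immediate from $(T_{e}T_{f}^{*})^{*}=T_{f}T_{e}^{*}$. For closure under multiplication, the one computation to carry out is $(T_{e}T_{f}^{*})(T_{g}T_{h}^{*})=T_{e}(T_{f}^{*}T_{g})T_{h}^{*}$: by \lemref{basic-calcs}(d) the middle factor is $0$, or $T_{k}$ with $g=fk$, or $T_{k}^{*}$ with $f=gk$, and by \lemref{finite-align} (equivalently \lemref{join-paths}) these cases are exhaustive. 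In the first case the product is $0$; in the second it is $T_{e}T_{k}T_{h}^{*}$, which equals $T_{ek}T_{h}^{*}$ if $s(e)=r(k)$ and $0$ otherwise by \lemref{basic-calcs}(c); in the third it is $T_{e}(T_{h}T_{k})^{*}$, which equals $T_{e}T_{hk}^{*}$ if $s(h)=r(k)$ and $0$ otherwise. In every case the result lies in $\mathcal{S}$, completing the argument.

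I do not anticipate a genuine obstacle here; the corollary is essentially a repackaging of the remark preceding it. (An alternative route is a direct induction on word length: $\mathcal{O}(\Lambda)=\overline{\mathrm{span}}$ of all finite words in $\{T_{e}\}\cup\{T_{e}^{*}\}$, with base cases $T_{e}=T_{e}T_{s(e)}^{*}$ and $T_{e}^{*}=T_{s(e)}T_{e}^{*}$ and inductive step reducing $(T_{g}T_{h}^{*})T_{e}$ and $(T_{g}T_{h}^{*})T_{e}^{*}$ via \lemref{basic-calcs}(c),(d) exactly as above.) The only point requiring care is bookkeeping the composability hypotheses $s(\cdot)=r(\cdot)$ needed to apply relation 2, using \lemref{basic-calcs}(c) to land on $0$ whenever they fail.
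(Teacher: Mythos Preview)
Your proposal is correct and takes essentially the same approach as the paper: both reduce everything to the computation of $T_{f}^{*}T_{g}$ via \lemref{basic-calcs}(d), with the paper phrasing it as ``every word in the generators and their adjoints equals some $T_{e}T_{f}^{*}$'' (precisely your alternative route) and you phrasing it as ``$\mathcal{S}$ is self-adjoint and multiplicatively closed.'' The difference is purely cosmetic; your write-up is in fact more careful about the composability bookkeeping than the paper's two-line proof.
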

\begin{proof}
By the construction of $\mathrm{C}^{*}$-algebras which are universal
for a set of generators and relations, the span of words in the elements
$\left\{ T_{e}:e\in\Lambda\right\} \cup\left\{ T_{e}^{*}:e\in\Lambda\right\} $
is dense in $\mathcal{O}\left(\Lambda\right)$. As mentioned above,
\lemref{basic-calcs} part d implies that every such word is equal
to an element in $\left\{ T_{e}T_{f}^{*}:e,f\in\Lambda\right\} $,
and the result follows.
\end{proof}
Let $\alpha,\beta\in\mathrm{Ord}$ with $\alpha\leq\beta$ be given.
By the following lemma, each instance in $\mathcal{O}\left(\Lambda_{\alpha}\right)$
of relation 4 from \thmref{generators} continues to hold in $\mathcal{O}\left(\Lambda_{\beta}\right)$.
Likewise, relations 1-3 in $\mathcal{T}\left(\Lambda_{\alpha}\right)$
continue to hold in $\mathcal{T}\left(\Lambda_{\beta}\right)$, giving
us the inductive system in \propref{inductive-system}.
\begin{lem}
\label{lem:alpha-regular-implies-beta-regular}If $\alpha\leq\beta$
and $F\subseteq v\Lambda_{\alpha}$ is finite and exhaustive for $v$
in $\Lambda_{\alpha}$, then $F$ is exhaustive for $v$ in $\Lambda_{\beta}$.
\end{lem}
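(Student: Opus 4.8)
The plan is to pull an arbitrary path of $\Lambda_{\beta}$ back into $\Lambda_{\alpha}$ by truncation, apply the hypothesis there, and then lift the resulting common extension back up to $\Lambda_{\beta}$.

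First I would record two preliminaries. Since $d(v)=0<\omega^{\alpha}$ we have $v\in v\Lambda_{\alpha}$, so any set exhaustive for $v$ in $\Lambda_{\alpha}$ is nonempty; hence $F\neq\emptyset$, and as $F$ is finite with $d(e)<\omega^{\alpha}$ for each $e\in F$, the ordinal $\delta:=\max_{e\in F}d(e)$ satisfies $\delta<\omega^{\alpha}\le\omega^{\beta}$. The purpose of $\delta$ is that it dominates the lengths of all members of $F$ while still lying below $\omega^{\alpha}$. Also, since $\omega^{\alpha}$ is closed under addition and $\omega^{\alpha}\le\omega^{\beta}$, we have $\Lambda_{\alpha}\subseteq\Lambda_{\beta}$ and $e\Lambda_{\alpha}\subseteq e\Lambda_{\beta}$ for every $e\in\Lambda_{\alpha}$; in particular $F\subseteq v\Lambda_{\beta}$, so it only remains to verify the exhaustiveness condition.

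Now fix $f\in v\Lambda_{\beta}$; I must produce $e\in F$ with $e\Lambda_{\beta}\cap f\Lambda_{\beta}\neq\emptyset$. If $d(f)<\omega^{\alpha}$ then $f\in v\Lambda_{\alpha}$, the hypothesis supplies $e\in F$ with $e\Lambda_{\alpha}\cap f\Lambda_{\alpha}\neq\emptyset$, and the inclusions above give $e\Lambda_{\beta}\cap f\Lambda_{\beta}\neq\emptyset$. Otherwise $d(f)\ge\omega^{\alpha}>\delta$, so the truncation $f_{\delta}$ of \defref{factor} is defined, $d(f_{\delta})=\delta<\omega^{\alpha}$, and $r(f_{\delta})=r(f)=v$ (since $f=f_{\delta}f^{\delta}$); hence $f_{\delta}\in v\Lambda_{\alpha}$. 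Applying the hypothesis to $f_{\delta}$ yields $e\in F$ and a common extension $g\in e\Lambda_{\alpha}\cap f_{\delta}\Lambda_{\alpha}$.

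The final step lifts this. Because $d(e)\le\delta=d(f_{\delta})$, the argument in the proof of \lemref{finite-align}, applied to $e$ and $f_{\delta}$ in the ordinal graph $\Lambda_{\alpha}$, forces $f_{\delta}\in e\Lambda_{\alpha}$: writing $g=er=f_{\delta}t$ and $f_{\delta}=pq$ with $d(p)=d(e)$, unique factorization of $g$ gives $p=e$, so $f_{\delta}=eq$. Then $f=f_{\delta}f^{\delta}=e(qf^{\delta})$, and since $d(qf^{\delta})\le d(e)+d(qf^{\delta})=d(f)<\omega^{\beta}$ the path $qf^{\delta}$ lies in $\Lambda_{\beta}$, whence $f\in e\Lambda_{\beta}\cap f\Lambda_{\beta}$. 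I do not anticipate a genuine obstacle; the one idea that has to be right is the choice of truncation length $\delta=\max_{e\in F}d(e)$, since that is exactly what makes $d(e)\le d(f_{\delta})$ hold for every candidate $e$, so that \lemref{finite-align} can upgrade ``$e$ and $f_{\delta}$ are compatible'' to ``$f_{\delta}$ extends $e$'', which is what the lift requires. The only case needing any care is $d(f)\ge\omega^{\alpha}$, the subcase $d(f)<\omega^{\alpha}$ being immediate.
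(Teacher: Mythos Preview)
Your proof is correct and follows essentially the same approach as the paper's: truncate $f$ at length $\delta=\max_{e\in F}d(e)$, apply exhaustiveness in $\Lambda_{\alpha}$ to the truncation, and then use unique factorization (via the argument of \lemref{finite-align}) together with $d(e)\le\delta$ to conclude that $f$ itself extends $e$. Your write-up is in fact more careful than the paper's, which contains a minor typo ($\omega^{\beta}$ where $\omega^{\alpha}$ is meant) and reuses the letter $h$ for two different paths.
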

\begin{proof}
Suppose $f\in v\Lambda_{\beta}\backslash v\Lambda_{\alpha}$. Factor
$f$ as $f=gk$ where $\omega^{\alpha}>d(g)\geq\max_{e\in F}d(e)$.
Now for some $e\in F$, $e\Lambda_{\alpha}\cap g\Lambda_{\alpha}\not=\emptyset$,
so choose $h\in e\Lambda_{\alpha}\cap g\Lambda_{\alpha}$. Then $h=ea=gb$
for $a,b\in\Lambda_{\alpha}$. Let $g=e'c$ where $d(e')=d(e)$, whence
$h=ea=e'cb$. By uniqueness of factorization, $e=e'$, and $f\in e\Lambda_{\beta}=e\Lambda_{\beta}\cap f\Lambda_{\beta}$. 
\end{proof}
\begin{prop}
\label{prop:inductive-system}For each $\alpha,\beta\in\mathrm{Ord}$
with $\alpha\leq\beta$ there exist {*}-homomorphisms $\sigma_{\alpha}^{\beta}:\mathcal{T}\left(\Lambda_{\alpha}\right)\rightarrow\mathcal{T}\left(\Lambda_{\beta}\right)$
and $\rho_{\alpha}^{\beta}:\mathcal{O}\left(\Lambda_{\alpha}\right)\rightarrow\mathcal{O}\left(\Lambda_{\beta}\right)$
defined by
\begin{align*}
\sigma_{\alpha}^{\beta}\left(T_{e}\right) & =S_{e}\\
\rho_{\alpha}^{\beta}\left(T_{e}\right) & =S_{e}
\end{align*}
 for $e\in\Lambda_{\alpha}$, where $\left\{ T_{e}:e\in\Lambda\right\} $
and $\left\{ S_{e}:e\in\Lambda\right\} $ are the corresponding generators
of $\mathcal{T}\left(\Lambda\right)$ or $\mathcal{O}\left(\Lambda\right)$.
If $\alpha\leq\gamma\leq\beta$, $\sigma_{\gamma}^{\beta}\circ\sigma_{\alpha}^{\gamma}=\sigma_{\alpha}^{\beta}$
and $\rho_{\gamma}^{\beta}\circ\rho_{\alpha}^{\gamma}=\rho_{\alpha}^{\beta}$.
If $\beta$ is a limit ordinal, then
\begin{align*}
\mathcal{O}\left(\Lambda_{\beta}\right) & =\overline{\bigcup_{\alpha<\beta}\rho_{\alpha}^{\beta}\left(\mathcal{O}\left(\Lambda_{\alpha}\right)\right)}\\
\mathcal{T}\left(\Lambda_{\beta}\right) & =\overline{\bigcup_{\alpha<\beta}\sigma_{\alpha}^{\beta}\left(\mathcal{T}\left(\Lambda_{\alpha}\right)\right)}
\end{align*}
\end{prop}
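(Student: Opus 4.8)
The maps $\sigma_\alpha^\beta$ and $\rho_\alpha^\beta$ will come straight from the universal properties, and the limit-ordinal formula from a density argument; none of the individual steps is hard, so the plan is mostly to identify which earlier lemma does each piece of work. Fix $\alpha\le\beta$. Since $\alpha\le\beta$ gives $\omega^\alpha\le\omega^\beta$ we have $\Lambda_\alpha\subseteq\Lambda_\beta$, and $\Lambda_\alpha$ is closed under $s$, $r$, and composition (using that $\omega^\alpha$ absorbs finite sums: $\delta,\gamma<\omega^\alpha$ implies $\delta+\gamma<\omega^\alpha$). I claim $\{T_e:e\in\Lambda_\alpha\}\subseteq\mathcal T(\Lambda_\beta)$ is a Toeplitz $\Lambda_\alpha$-family and $\{T_e:e\in\Lambda_\alpha\}\subseteq\mathcal O(\Lambda_\beta)$ is a Cuntz-Krieger $\Lambda_\alpha$-family. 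Relations 1 and 2 are immediate from closure of $\Lambda_\alpha$. For relation 3 one checks that, for $e,f\in\Lambda_\alpha$, both $e\vee f$ and the alternative $e\Lambda\cap f\Lambda=\emptyset$ are the same whether computed in $\Lambda_\alpha$ or in $\Lambda_\beta$: by \lemref{join-paths} it is enough to see $e\in f\Lambda_\alpha\iff e\in f\Lambda_\beta$, and if $e=fg$ with $g\in\Lambda_\beta$ then $d(g)\le d(f)+d(g)=d(e)<\omega^\alpha$ (using the ordinal inequality $\gamma\le\delta+\gamma$), so $g\in\Lambda_\alpha$. Relation 4 transfers by \lemref{alpha-regular-implies-beta-regular}: if $F\subseteq v\Lambda_\alpha$ is finite and exhaustive for $v$ in $\Lambda_\alpha$ it is exhaustive for $v$ in $\Lambda_\beta$, so $T_v=\bigvee_{e\in F}T_eT_e^*$ holds in $\mathcal O(\Lambda_\beta)$. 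Applying the universal property of $\mathcal T(\Lambda_\alpha)$ and $\mathcal O(\Lambda_\alpha)$ then yields the $*$-homomorphisms $\sigma_\alpha^\beta$ and $\rho_\alpha^\beta$ fixing generators.

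The identities $\sigma_\gamma^\beta\circ\sigma_\alpha^\gamma=\sigma_\alpha^\beta$ and $\rho_\gamma^\beta\circ\rho_\alpha^\gamma=\rho_\alpha^\beta$ (and $\sigma_\alpha^\alpha=\mathrm{id}$, $\rho_\alpha^\alpha=\mathrm{id}$) follow because in each case both sides are $*$-homomorphisms agreeing on $\{T_e:e\in\Lambda_\alpha\}$, which generates the domain. So $\{\mathcal T(\Lambda_\alpha),\sigma_\alpha^\beta\}$ and $\{\mathcal O(\Lambda_\alpha),\rho_\alpha^\beta\}$ are inductive systems over the directed set of ordinals below $\beta$.

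Now let $\beta$ be a limit ordinal. The inclusion $\overline{\bigcup_{\alpha<\beta}\rho_\alpha^\beta(\mathcal O(\Lambda_\alpha))}\subseteq\mathcal O(\Lambda_\beta)$ is clear since each $\rho_\alpha^\beta(\mathcal O(\Lambda_\alpha))$ lies in $\mathcal O(\Lambda_\beta)$. Conversely, by \corref{gens-word} (the same reduction of words to the form $T_eT_f^*$ also gives $\mathcal T(\Lambda_\beta)=\overline{\mathrm{span}}\{T_eT_f^*:e,f\in\Lambda_\beta\}$) it suffices to place each $T_eT_f^*$ in the union. For $e,f\in\Lambda_\beta$ we have $d(e),d(f)<\omega^\beta=\sup_{\alpha<\beta}\omega^\alpha$, so $d(e),d(f)<\omega^\alpha$ for some $\alpha<\beta$, i.e.\ $e,f\in\Lambda_\alpha$, and then $T_eT_f^*=\rho_\alpha^\beta(T_eT_f^*)\in\rho_\alpha^\beta(\mathcal O(\Lambda_\alpha))$. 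By directedness and compatibility of the $\rho_\alpha^\beta$, any finite linear combination of such terms lies in a single $\rho_\alpha^\beta(\mathcal O(\Lambda_\alpha))$, so the dense $*$-subalgebra $\mathrm{span}\{T_eT_f^*\}$ of $\mathcal O(\Lambda_\beta)$ is contained in $\bigcup_{\alpha<\beta}\rho_\alpha^\beta(\mathcal O(\Lambda_\alpha))$, and the stated equality follows; the argument for $\mathcal T$ is identical. The only step needing care is the transfer of the defining relations in the first paragraph --- that minimal common extensions and finite exhaustive sets localize correctly --- and \lemref{join-paths} together with \lemref{alpha-regular-implies-beta-regular} is exactly what handles it.
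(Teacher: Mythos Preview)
Your proof is correct and follows essentially the same approach as the paper: both obtain the maps from the universal properties after checking the relations transfer (with \lemref{alpha-regular-implies-beta-regular} handling relation~4), and both handle the limit-ordinal case by showing every generator lies in some $\rho_\alpha^\beta(\mathcal O(\Lambda_\alpha))$ via $\omega^\beta=\sup_{\alpha<\beta}\omega^\alpha$. Your argument is more explicit than the paper's about why relation~3 localizes (via \lemref{join-paths}) and invokes \corref{gens-word} rather than just ``finite sums of words,'' but this is elaboration of the same strategy rather than a different route.
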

\begin{proof}
The existence of the inductive systems follows from \lemref{alpha-regular-implies-beta-regular},
the universal properties of $\mathcal{T}\left(\Lambda\right)$ and
$\mathcal{O}\left(\Lambda\right)$, and the discussion above. Let
$\left\{ S_{e}:e\in\Lambda_{\beta}\right\} $ be the generators of
$\mathcal{O}\left(\Lambda_{\beta}\right)$. If $\alpha<\beta$ and
$\beta$ is a limit ordinal, $\alpha+1<\beta$. Hence if $d\left(e\right)<\beta$,
$S_{e}=\rho_{d\left(e\right)+1}^{\beta}\left(V_{e}\right)$ where
$V_{e}$ is the generator for $e$ in $\mathcal{O}\left(\Lambda_{d\left(e\right)+1}\right)$.
Thus $\cup_{\alpha<\beta}\rho_{\alpha}^{\beta}\left(\mathcal{O}\left(\Lambda_{\alpha}\right)\right)$
contains all finite sums of words in the generators, and hence is
dense in $\mathcal{O}\left(\Lambda_{\beta}\right)$. Identical logic
applies to $\mathcal{T}\left(\Lambda_{\beta}\right)$.
\end{proof}
Next we define the connected components of an ordinal graph. The connected
components of an ordinal graph are the finest partition of the morphisms
for which composable $f,g\in\Lambda$ belong to the same part. Each
connected component of an ordinal graph is also an ordinal graph.
We wish to describe the relationship between the Cuntz-Krieger algebra
of $\Lambda$ and the Cuntz-Krieger algebras of its connected components.
\begin{defn}
\label{def:connected-components}For an ordinal graph $\Lambda$,
define the relation $\preceq$ on $\Lambda$ where $e\preceq f$ if
there is $g\in\Lambda$ with $s(g)=r(f)$ and $r(g)=s(e)$. Let $\sim$
be the equivalence relation generated by $\preceq$, that is, $e\sim f$
if for every equivalence relation $\simeq$ on $\Lambda$ with the
property that $g\preceq h$ implies $g\simeq h$, $e\simeq f$. Then
$F\subseteq\Lambda$ is a connected component of $\Lambda$ if $F$
is an equivalence class for $\sim$. Let $\mathcal{F}\left(\Lambda\right)$
denote the set of connected components of $\Lambda$.
\end{defn}
\begin{prop}
\label{prop:connected-components}If $\Lambda$ is an ordinal graph,
then each $F\in\mathcal{F}\left(\Lambda\right)$ is an ordinal graph.
Moreover, we have the following isomorphism.
\[
\mathcal{O}\left(\Lambda\right)\cong\bigoplus_{F\in\mathcal{F}\left(\Lambda\right)}^{c_{0}}\mathcal{O}\left(F\right)
\]
\end{prop}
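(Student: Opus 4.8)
The plan is to first check that each $F\in\mathcal{F}\left(\Lambda\right)$ is an ordinal graph, and then to realize the isomorphism by producing, for every component $F$, a Cuntz--Krieger $F$-family inside $\mathcal{O}\left(\Lambda\right)$ and a Cuntz--Krieger $\Lambda$-family inside $\mathcal{O}\left(F\right)$, and assembling the resulting $*$-homomorphisms. For the first part I would record three elementary facts about $\sim$: (i) $e\sim s\left(e\right)\sim r\left(e\right)$ for every $e$ (take the identity at $s\left(e\right)$ to see $e\preceq s\left(e\right)$, and the identity at $r\left(e\right)$ to see $r\left(e\right)\preceq e$); (ii) composable morphisms lie in the same component (if $s\left(e\right)=r\left(f\right)$, take $g=s\left(e\right)$ to get $e\preceq f$); (iii) hence a composite $ef$ lies in the component of $e$ and of $f$, since $ef\sim s\left(ef\right)=s\left(f\right)\sim f$ and $ef\sim r\left(ef\right)=r\left(e\right)\sim e$. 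From (ii) and (iii) each $F$ is closed under composition and under $s,r$; and since $e=e_{\alpha}e^{\alpha}$ with $e_{\alpha},e^{\alpha}$ composable, $F$ contains the factors $e_{\alpha},e^{\alpha}$ of any $e\in F$ and any $\alpha\leq d\left(e\right)$. Uniqueness of those factorizations is inherited from $\Lambda$, so $\left(F,d|_{F}\right)$ is an ordinal graph.

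Two further observations feed the $\mathrm{C}^{*}$-algebraic argument. First, if $e$ and $f$ lie in distinct components then $e\Lambda\cap f\Lambda=\emptyset$: a common extension $g=eh=fk$ would be composable past both $e$ and $f$, forcing $e\sim g\sim f$ by (iii). Second, if $e,f$ lie in a common component $F$, then every element of $e\Lambda\cap f\Lambda$ lies in $F$ (again by (iii)); consequently the common extensions of $\left\{e,f\right\}$, the set $e\vee f$, and --- since $v\Lambda\subseteq F$ whenever $v\in F$ --- the finite exhaustive sets for a vertex $v\in F$ are the same whether computed in $F$ or in $\Lambda$. Granting these, $\left\{T_{e}:e\in F\right\}\subseteq\mathcal{O}\left(\Lambda\right)$ satisfies relations 1--4 for the ordinal graph $F$ (relations 1--2 because $F$ is a subcategory; relation 3 by \lemref{join-paths} together with the two observations; relation 4 by the exhaustive-set observation), so the universal property of $\mathcal{O}\left(F\right)$ gives a $*$-homomorphism $\iota_{F}:\mathcal{O}\left(F\right)\rightarrow\mathcal{O}\left(\Lambda\right)$ fixing generators. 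In the other direction, the assignment $S_{e}=T_{e}$ for $e\in F$ and $S_{e}=0$ for $e\notin F$ defines a Cuntz--Krieger $\Lambda$-family in $\mathcal{O}\left(F\right)$: in each relation all the morphisms that appear lie in a single component, so the relation either reduces to the corresponding relation of $\mathcal{O}\left(F\right)$ (when that component is $F$) or has both sides zero (when it is not), using that composable morphisms and common extensions never straddle components. This yields $\pi_{F}:\mathcal{O}\left(\Lambda\right)\rightarrow\mathcal{O}\left(F\right)$ with $\pi_{F}\left(T_{e}\right)=S_{e}$.

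It remains to assemble these maps. Using \lemref{basic-calcs} parts c--d together with $e\Lambda\cap f\Lambda=\emptyset$ for $e,f$ in different components, one checks that any product of a generator or adjoint attached to $F$ with one attached to $F'\neq F$ is zero; hence the $\mathrm{C}^{*}$-subalgebras $\iota_{F}\left(\mathcal{O}\left(F\right)\right)$ are pairwise orthogonal and $\iota:=\sum_{F}\iota_{F}$ is a well-defined $*$-homomorphism on $\bigoplus_{F}^{c_{0}}\mathcal{O}\left(F\right)$. Likewise $\pi:=\bigoplus_{F}\pi_{F}$ is well defined, since each $T_{e}$ --- hence each element of a dense $*$-subalgebra of $\mathcal{O}\left(\Lambda\right)$ --- is annihilated by all but finitely many $\pi_{F}$. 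Finally $\pi_{F}\circ\iota_{F'}=\delta_{F,F'}\,\mathrm{id}$, so $\pi\circ\iota$ and $\iota\circ\pi$ act as the identity on the respective generating sets and therefore everywhere; thus $\iota$ is the desired isomorphism. I expect the only real work to be the case analysis verifying relations 3 and 4 for the two families $\left\{T_{e}:e\in F\right\}$ and $\left\{S_{e}\right\}$, which is exactly where the structural observations about components and common extensions are invoked; the remaining steps are formal.
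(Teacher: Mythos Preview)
Your proposal is correct and follows essentially the same approach as the paper: both verify that each component $F$ is closed under composition and factorization, then build a Cuntz--Krieger $F$-family in $\mathcal{O}(\Lambda)$ and a Cuntz--Krieger $\Lambda$-family in $\bigoplus_F \mathcal{O}(F)$ (the paper does the latter in one stroke rather than componentwise), using that $v\Lambda\subseteq[v]$ so exhaustive sets are unchanged. The only cosmetic difference is in the final step: you check that the induced maps are mutual inverses on generators, whereas the paper observes that the images $\nu_F(\mathcal{O}(F))$ are pairwise orthogonal ideals generating $\mathcal{O}(\Lambda)$ and invokes the standard direct-sum decomposition; these are equivalent packagings of the same computation.
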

\begin{proof}
Let $F\in\mathcal{F}\left(\Lambda\right)$ and $e,f\in F$ with $s(e)=r(f)$.
Then $ef\in F$. Moreover, if $\alpha\leq d(e)$, $r\left(e_{\alpha}\right)=r\left(e\right)$
and $s\left(e^{\alpha}\right)=s\left(e\right)$, so $e_{\alpha},e^{\alpha}\in F$.
Thus $F$ is an ordinal graph.

Let $\mathcal{A}=\oplus_{F\in\mathcal{F}}\mathcal{O}\left(F\right)$.
We will construct a Cuntz-Krieger $\Lambda$-family in $\mathcal{A}$.
For $e\in\Lambda$, denote by $[e]$ the connected component of $\Lambda$
containing $e$. Let $\left\{ T_{e}:e\in F\right\} $ be the generators
of $\mathcal{O}\left(F\right)$ for each $F\in\mathcal{F}\left(\Lambda\right)$,
and let $\left\{ V_{e}:e\in\Lambda\right\} $ be the generators for
$\mathcal{O}\left(\Lambda\right)$. Then for each $e\in\Lambda$,
define $S_{e}\in\mathcal{A}$ by
\[
\left(S_{e}\right)_{F}=\begin{cases}
T_{e} & [e]=F\\
0 & \text{otherwise}
\end{cases}
\]

If $e\in\Lambda$, $[s(e)]=[e]$, so indeed $S_{e}^{*}S_{e}=S_{s(e)}$.
Likewise, if $e,f\in\Lambda$ with $s(e)=r(f)$, $[e]=[f]$ so $S_{e}S_{f}=S_{ef}$.
If $s(e)\not=r(f)$ and $[e]=[f]$, $S_{e}S_{f}=0$ since $T_{e}T_{f}=0$.
And if $[e]\not=[f]$, $S_{e}S_{f}=0$. Finally, if $G\subseteq\Lambda$
is finite and exhaustive for $v\in\Lambda_{0}$, then $G\subseteq[v]$.
Thus $\sum_{e\in G}S_{e}S_{e}^{*}=S_{v}$, and $\left\{ S_{e}:e\in\Lambda\right\} $
is a Cuntz-Krieger $\Lambda$-family. This gives a surjective {*}-homomorphism
$\mu:\mathcal{O}\left(\Lambda\right)\rightarrow\mathcal{A}$ such
that $\mu\left(V_{e}\right)=S_{e}$.

If $F\in\mathcal{F}\left(\Lambda\right)$, $\left\{ V_{e}:e\in F\right\} $
is a Cuntz-Krieger $F$-family. Thus there exists $\nu_{F}:\mathcal{O}\left(F\right)\rightarrow\mathcal{O}\left(\Lambda\right)$
where $\nu_{F}\left(T_{e}\right)=V_{e}$. Hence $\left(\mu\circ\nu_{F}\right)_{F}=\text{id}_{\mathcal{O}\left(F\right)}$,
and $\nu_{F}$ is injective. For $e\in\Lambda$, $\nu_{F}\left(\mathcal{O}\left(F\right)\right)T_{e}=0$
if $e\not\in F$, so $\left\{ \nu_{F}\left(\mathcal{O}\left(F\right)\right):F\in\mathcal{F}\left(\Lambda\right)\right\} $
is a family of pairwise orthogonal ideals which generate $\mathcal{O}\left(\Lambda\right)$.
Therefore $\mathcal{O}\left(\Lambda\right)\cong\oplus_{F\in\mathcal{F}\left(\Lambda\right)}\nu_{F}\left(\mathcal{O}\left(F\right)\right)\cong\mathcal{A}$.
\end{proof}

\section{Exhaustive Sets}

Let $\Lambda$ be a fixed ordinal graph. In this section, we wish
to understand the exhaustive sets which appear in relation 4 of \ref{thm:generators}
and are used to define $\mathcal{O}\left(\Lambda\right)$. By assuming
relation 4 only holds for some finite exhaustive sets, we recover
the relation for all other finite exhaustive sets.
\begin{defn}
For $\alpha\in\mathrm{Ord}$, $v\in\Lambda_{0}$ is an $\alpha$ \emph{source}
if there is no path $e\in v\Lambda$ such that $d\left(e\right)=\omega^{\alpha}$.
$v$ is $\alpha$ \emph{source-regular} if for every $e\in v\Lambda_{\alpha}$
there is $f\in s\left(e\right)\Lambda$ such that $d\left(f\right)=\omega^{\alpha}$.
\end{defn}
\begin{prop}
If $v\in\Lambda_{0}$ is an $\alpha$ source and $\beta\geq\alpha$,
then $v$ is a $\beta$ source.
\end{prop}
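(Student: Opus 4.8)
The statement is the contrapositive of a one-step factorization argument, so I would argue by contradiction. Suppose $v$ is not a $\beta$ source; then there is a path $e\in v\Lambda$ with $d(e)=\omega^{\beta}$. The only arithmetic input needed is that $\omega^{\alpha}\leq\omega^{\beta}$ whenever $\alpha\leq\beta$: when $\alpha=\beta$ this is trivial, and when $\alpha<\beta$ the preliminary identity $\omega^{\alpha}+\omega^{\beta}=\omega^{\beta}$ exhibits an ordinal $\gamma=\omega^{\beta}$ with $\omega^{\alpha}+\gamma=\omega^{\beta}$, which by the definition of the order on $\mathrm{Ord}$ gives $\omega^{\alpha}\leq\omega^{\beta}$. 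Hence $\omega^{\alpha}\leq d(e)$.

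Now I would invoke unique factorization (\defref{An-ordinal-graph}), or equivalently the notation of \defref{factor}: since $\omega^{\alpha}\leq d(e)$, the path $e_{\omega^{\alpha}}$ is defined, with $d\!\left(e_{\omega^{\alpha}}\right)=\omega^{\alpha}$ and $e=e_{\omega^{\alpha}}e^{\omega^{\alpha}}$. Applying the range map to this composition gives $r\!\left(e_{\omega^{\alpha}}\right)=r(e)=v$, so $e_{\omega^{\alpha}}\in v\Lambda$ is a path of length $\omega^{\alpha}$. This contradicts the hypothesis that $v$ is an $\alpha$ source, and the proposition follows. There is no real obstacle here; the entire content is the monotonicity $\alpha\leq\beta\Rightarrow\omega^{\alpha}\leq\omega^{\beta}$ together with the fact that truncating a path to an initial segment does not change its range, both of which are immediate from the material already developed.
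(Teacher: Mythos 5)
Your proof is correct and follows essentially the same route as the paper: pass to the contrapositive, note $\omega^{\alpha}\leq\omega^{\beta}$, and truncate a path of length $\omega^{\beta}$ at $\omega^{\alpha}$ to get a path in $v\Lambda$ of length $\omega^{\alpha}$. In fact your use of the initial segment $e_{\omega^{\alpha}}$ is the right object here, whereas the paper's one-line proof writes $e^{\omega^{\alpha}}$, which in the notation of \defref{factor} is the tail and appears to be a slip.
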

\begin{proof}
Suppose $v\in\Lambda_{0}$ and $e\in v\Lambda$ with $d\left(e\right)=\omega^{\beta}$.
Since $\omega^{\alpha}\leq\omega^{\beta}$, $r\left(e^{\omega^{\alpha}}\right)=r\left(e\right)=v$.
Thus $e^{\omega^{\alpha}}\in v\Lambda$.
\end{proof}
If $\Lambda$ is a directed graph, we only require that $\sum_{e\in v\Lambda}T_{e}T_{e}^{*}$
if $v$ is row-finite and not a source. The above definition of $\alpha$
source plays the role of sources for a directed graph. In fact, if
$\Lambda$ is a directed graph, then $v$ is a source if and only
if $v$ is a $0$ source. Likewise, the following definition is analogous
to row-finiteness for directed graphs. 
\begin{defn}
For $\alpha\in\mathrm{Ord}$, $v\in\Lambda_{0}$ is $\alpha$ \emph{row-finite}
if $\left|\left\{ f\in v\Lambda:d(f)=\omega^{\alpha}\right\} \right|<\infty$.
$v$ is $\alpha$ \emph{regular} if $v$ is $\alpha$ source-regular
and $v$ is $\alpha$ row-finite.
\end{defn}
\begin{example}
Let $\Lambda$ be the ordinal graph in \figref{one-loop-fig}. Then
$v$ is 1 source-regular and 1 row-finite, hence $v$ is 1-regular.
\end{example}
\begin{example}
\label{exa:two-loop-example}Consider an ordinal graph $\Lambda$
generated by $v,e,f,g$ where
\begin{enumerate}
\item $v=s\left(e\right)=r\left(e\right)=s\left(f\right)=r\left(f\right)=s\left(g\right)=r\left(g\right)$
\item $efg=g$
\item $d\left(v\right)=0$
\item $d\left(e\right)=d\left(f\right)=1$
\item $d\left(g\right)=\omega$
\end{enumerate}
Then $v$ is 1 source-regular because if $d\left(h\right)<\omega$
with $r\left(h\right)=v$, $s\left(h\right)=r\left(g\right)$ and
$d\left(g\right)=\omega$. However, $v$ is not $1$ row-finite because
for each $k\in\mathbb{N}$ there exists a distinct path $\underbrace{ee\ldots e}_{k\text{ times}}g$
of length $\omega$. Thus $v$ is not $1$-regular.

\begin{figure}[h]
\begin{center}
\begin{tikzpicture}[every loop/.style={looseness=60}]
	\draw[->] node[circle, fill, inner sep=0, minimum size=4pt, label=above:$v$] (v) {} edge[in=115, out=180, loop] node[left] {$e$} ();
	\draw[->] (v) edge[in=65, out=0, loop] node[right] {$f$} ();
	\draw[->] (v) edge[in=225, out=315, loop] node[below] {$g: efef\ldots;g=efg$} ();
\end{tikzpicture}
\end{center}

\caption{\label{fig:two-loop-fig}$\Lambda$ as in \exaref{two-loop-example}}
\end{figure}
\end{example}
The previous example demonstrates how $1$-regularity is much more
delicate than $0$-regularity. The source of $1$-irregularity is
that $\left|v\Lambda^{\omega}\right|=\infty$, and this happens even
when $\Lambda$ is finitely generated. On the other hand, a finitely
generated ordinal graph will never have $\left|v\Lambda^{1}\right|=\infty$,
so $0$-regularity occurs often. We dedicate most of this section
to proving that all finite exhaustive sets are obtained by considering
the paths in $v\Lambda^{\omega^{\alpha}}$ for $\alpha$ regular vertices
$v$.
\begin{prop}
\label{prop:regular-smaller}If $v$ is $\alpha$ regular and $\beta<\alpha$,
then $v$ is $\beta$ regular.
\end{prop}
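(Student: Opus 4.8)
The plan is to deduce $\beta$ source-regularity and $\beta$ row-finiteness for $v$ separately, in each case by transporting the corresponding $\alpha$-property back down along a factorization. The only ordinal-arithmetic inputs needed are that $\beta<\alpha$ implies $\omega^{\beta}<\omega^{\alpha}$ and $\omega^{\beta}+\omega^{\alpha}=\omega^{\alpha}$; the first is strict monotonicity of $\gamma\mapsto\omega^{\gamma}$ (alternatively, writing $\alpha=\beta+\delta$ with $\delta\geq1$ gives $\omega^{\alpha}=\omega^{\beta}\cdot\omega^{\delta}>\omega^{\beta}$), and the second is the identity $\omega^{\gamma}+\omega^{\delta}=\omega^{\delta}$ for $\gamma<\delta$ recorded in the preliminaries. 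In particular $\omega^{\beta}\leq\omega^{\alpha}$, so $v\Lambda_{\beta}\subseteq v\Lambda_{\alpha}$.

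For source-regularity, I would take an arbitrary $e\in v\Lambda_{\beta}$, so $d(e)<\omega^{\beta}\leq\omega^{\alpha}$ and hence $e\in v\Lambda_{\alpha}$. Applying $\alpha$ source-regularity of $v$ yields $g\in s(e)\Lambda$ with $d(g)=\omega^{\alpha}$. Since $\omega^{\beta}\leq d(g)$, the factorization $g=g_{\omega^{\beta}}g^{\omega^{\beta}}$ is available, and $g_{\omega^{\beta}}$ has length $\omega^{\beta}$ and range $r(g_{\omega^{\beta}})=r(g)=s(e)$, so $g_{\omega^{\beta}}\in s(e)\Lambda$ witnesses $\beta$ source-regularity.

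For row-finiteness, I would argue by contradiction: suppose $\{f\in v\Lambda:d(f)=\omega^{\beta}\}$ is infinite and pick infinitely many distinct such paths $f_{1},f_{2},\dots$. Each $f_{n}$ lies in $v\Lambda_{\alpha}$ (as $d(f_{n})=\omega^{\beta}<\omega^{\alpha}$), so $\alpha$ source-regularity supplies $h_{n}\in s(f_{n})\Lambda$ with $d(h_{n})=\omega^{\alpha}$. Then $f_{n}h_{n}$ is defined, $r(f_{n}h_{n})=r(f_{n})=v$, and $d(f_{n}h_{n})=\omega^{\beta}+\omega^{\alpha}=\omega^{\alpha}$, so $f_{n}h_{n}\in v\Lambda$ has length $\omega^{\alpha}$. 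The crucial point is injectivity of $f_{n}\mapsto f_{n}h_{n}$: by the factorization identities established earlier, $(f_{n}h_{n})_{\omega^{\beta}}=(f_{n})_{\omega^{\beta}}=f_{n}$, so distinct $f_{n}$ give distinct $f_{n}h_{n}$. This produces infinitely many paths of length $\omega^{\alpha}$ in $v\Lambda$, contradicting $\alpha$ row-finiteness; hence $v$ is $\beta$ row-finite, and together with the previous paragraph, $\beta$ regular. I do not expect a genuine obstacle here — the argument is short — the one place demanding care is exactly the injectivity of the extension map, which is where unique factorization is doing the work.
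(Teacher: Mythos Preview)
Your proof is correct and follows essentially the same approach as the paper: both arguments establish $\beta$ source-regularity by truncating an $\omega^{\alpha}$-length extension to length $\omega^{\beta}$, and both establish $\beta$ row-finiteness by extending infinitely many $\omega^{\beta}$-paths to $\omega^{\alpha}$-paths via $\alpha$ source-regularity and invoking unique factorization for injectivity. Your write-up is slightly more explicit about the ordinal arithmetic and the injectivity step (via $(f_{n}h_{n})_{\omega^{\beta}}=f_{n}$), but the structure is the same.
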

\begin{proof}
Suppose $v\in\Lambda_{0}$ is $\alpha$ regular. Let $e\in v\Lambda_{\beta}$
be arbitrary. Then $e\in v\Lambda_{\alpha}$, and since $v$ is $\alpha$
regular, there exists $f\in s\left(e\right)\Lambda$ with $d\left(f\right)=\omega^{\alpha}$.
Then $f_{\omega^{\beta}}\in s\left(e\right)\Lambda$, hence $v$ is
$\beta$ source-regular. Now suppose for each $k\in\mathbb{N}$ there
are distinct $g_{k}\in v\Lambda$ with $d\left(g_{k}\right)=\omega^{\beta}$.
Since $v$ is $\alpha$ source-regular, for each $k\in\mathbb{N}$
choose $h_{k}\in s\left(g_{k}\right)\Lambda$ with $d\left(h_{k}\right)=\omega^{\alpha}$.
Then $d\left(g_{k}h_{k}\right)=d\left(g_{k}\right)+d\left(h_{k}\right)=\omega^{\beta}+\omega^{\alpha}=\omega^{\alpha}$,
and by unique factorization,
\[
\left|\left\{ g_{k}h_{k}:k\in\mathbb{N}\right\} \right|=\infty
\]
This contradicts $\alpha$ regularity of $v$.
\end{proof}
\begin{lem}
\label{lem:equal-length}If $e\Lambda\cap f\Lambda\not=\emptyset$
and $d(e)=d(f)$, then $e=f$.
\end{lem}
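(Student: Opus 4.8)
The plan is to deduce this directly from the uniqueness half of the factorization property, in essentially the same spirit as \lemref{finite-align} and \lemref{join-paths}. The only tool needed is \defref{factor}: for a path $g$ and an ordinal $\alpha\le d(g)$, there is a \emph{unique} pair $g_\alpha,g^\alpha$ with $g_\alpha g^\alpha=g$ and $d(g_\alpha)=\alpha$.

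First I would pick a common extension: since $e\Lambda\cap f\Lambda\ne\emptyset$, choose $g$ in this intersection, so $g=eh=fk$ for some $h,k\in\Lambda$. Because $eh$ and $fk$ are genuine compositions, $s(e)=r(h)$ and $s(f)=r(k)$, and applying the functor $d$ gives $d(g)=d(e)+d(h)\ge d(e)$ and likewise $d(g)\ge d(f)$. Set $\alpha:=d(e)=d(f)$; then $\alpha\le d(g)$, so the factorization $g=g_\alpha g^\alpha$ at length $\alpha$ is defined.

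Now observe that $eh$ is a factorization of $g$ whose first term has length exactly $\alpha=d(e)$, so uniqueness in \defref{An-ordinal-graph} forces $g_\alpha=e$; the identical argument applied to $fk$ gives $g_\alpha=f$. Hence $e=g_\alpha=f$, which is the claim.

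There is no real obstacle here — the statement is a one-line consequence of unique factorization once the two witnesses $e$ and $f$ are recognized as the length-$\alpha$ part of the same path $g$. The only point requiring a moment's care is checking that $\alpha\le d(g)$ so that $g_\alpha$ is well-defined, which follows immediately since $d$ is a functor and ordinal addition satisfies $\delta\le\delta+\eta$ for all $\eta$.
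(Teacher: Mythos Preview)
Your proof is correct and uses the same core idea as the paper: unique factorization at length $\alpha=d(e)=d(f)$. The paper's version first invokes \lemref{join-paths} to reduce to the case $e\in f\Lambda$ and then applies unique factorization to $e=es(e)=fg$, whereas you work directly with a common extension $g=eh=fk$ and read off $e=g_\alpha=f$; your route is marginally more self-contained but otherwise the same argument.
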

\begin{proof}
By \lemref{join-paths}, we may assume without loss of generality
that $e\in f\Lambda$. Then $e=fg$ for some $g\in\Lambda$. Since
$d\left(es\left(e\right)\right)=d\left(e\right)=d\left(f\right)$
and $es\left(e\right)=fg$, unique factorization implies $e=f$.
\end{proof}
Since we wish to see that $\alpha$ regular vertices play the role
of regular vertices in directed graphs, we show that in the following
sense, relation 4 is only applied to vertices which are $\alpha$-regular
for some $\alpha$. Later we will see that it suffices to only consider
exhaustive sets whose members have length which is a power of $\omega$.
\begin{thm}
\label{thm:regular-vertex-exhaustive-sets}$v\in\Lambda_{0}$ is $\alpha$
regular if and only if there exists $F\subseteq v\Lambda^{\omega^{\alpha}}$
which is finite and exhaustive for $v$. If $v$ is $\alpha$ regular,
then $v\Lambda^{\omega^{\alpha}}$ is the unique such $F\subseteq v\Lambda^{\omega^{\alpha}}$
which is finite and exhaustive.
\end{thm}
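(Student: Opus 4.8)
The plan is to prove both implications and the uniqueness clause by elementary manipulation of paths, the one substantive ingredient being that $\omega^\alpha$ is additively indecomposable: if $\gamma,\delta<\omega^\alpha$ then $\gamma+\delta<\omega^\alpha$, equivalently $\gamma+\omega^\alpha=\omega^\alpha$ whenever $\gamma<\omega^\alpha$; this follows from the Cantor normal form together with the identity $\omega^{\gamma}+\omega^\alpha=\omega^\alpha$ for $\gamma<\alpha$ recorded in the preliminaries. I will use freely that the elements of $v\Lambda^{\omega^\alpha}$ are precisely the length-$\omega^\alpha$ paths in $v\Lambda$, together with the two tools \lemref{equal-length} (if $e\Lambda\cap f\Lambda\neq\emptyset$ and $d(e)=d(f)$ then $e=f$) and \lemref{join-paths} (if $e\Lambda\cap f\Lambda\neq\emptyset$ then $e\in f\Lambda$ or $f\in e\Lambda$).

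For the forward direction I would assume $v$ is $\alpha$ regular and take $F=v\Lambda^{\omega^\alpha}$, which is finite since $v$ is $\alpha$ row-finite. To see $F$ is exhaustive for $v$, let $g\in v\Lambda$. If $d(g)\geq\omega^\alpha$, the factorisation $g=g_{\omega^\alpha}g^{\omega^\alpha}$ has $g_{\omega^\alpha}\in v\Lambda^{\omega^\alpha}=F$ and $g\in g_{\omega^\alpha}\Lambda\cap g\Lambda$. If $d(g)<\omega^\alpha$, then $g\in v\Lambda_\alpha$, so $\alpha$ source-regularity produces $h\in s(g)\Lambda$ with $d(h)=\omega^\alpha$; then $gh\in v\Lambda$ has length $d(g)+\omega^\alpha=\omega^\alpha$ by additive indecomposability, so $gh\in F$ and $gh\in gh\Lambda\cap g\Lambda$. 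Thus $F$ is finite and exhaustive for $v$.

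For the converse and the uniqueness clause together, I would suppose $F\subseteq v\Lambda^{\omega^\alpha}$ is finite and exhaustive for $v$. Given any $g\in v\Lambda^{\omega^\alpha}$, exhaustiveness gives $e\in F$ with $e\Lambda\cap g\Lambda\neq\emptyset$; since $d(e)=\omega^\alpha=d(g)$, \lemref{equal-length} forces $e=g$, so $g\in F$. Hence $v\Lambda^{\omega^\alpha}\subseteq F$, and with $F\subseteq v\Lambda^{\omega^\alpha}$ this gives $F=v\Lambda^{\omega^\alpha}$; in particular $v\Lambda^{\omega^\alpha}$ is finite, so $v$ is $\alpha$ row-finite, and $F$ is uniquely determined. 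For $\alpha$ source-regularity, take $e\in v\Lambda_\alpha$; exhaustiveness gives $g\in F$ with $g\Lambda\cap e\Lambda\neq\emptyset$, so by \lemref{join-paths} either $g\in e\Lambda$ or $e\in g\Lambda$, the latter being impossible as it forces $d(e)\geq d(g)=\omega^\alpha>d(e)$. Thus $g=eh$ for some $h$ with $r(h)=s(e)$, and from $d(e)+d(h)=\omega^\alpha$ with $d(e)<\omega^\alpha$ additive indecomposability gives $d(h)=\omega^\alpha$; so $h\in s(e)\Lambda$ has length $\omega^\alpha$, and $v$ is $\alpha$ source-regular, hence $\alpha$ regular.

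I expect the calculations to be short, and the only place that really needs attention is the repeated appeal to additive indecomposability of $\omega^\alpha$: it is what guarantees that appending a length-$\omega^\alpha$ path to a shorter path keeps the total length exactly $\omega^\alpha$, and that a length-$\omega^\alpha$ path properly extending a shorter one leaves a length-$\omega^\alpha$ tail. In effect this is what makes the paths of length $\omega^\alpha$ out of $v$ behave as a single coherent ``level'', and beyond that the only bookkeeping is the case split in the exhaustiveness argument -- truncation for long paths, extension for short ones -- so I do not anticipate a genuine obstacle.
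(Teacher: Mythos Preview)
Your proof is correct and follows essentially the same route as the paper's: both directions use the same case split (truncate long paths, extend short ones via source-regularity) for the forward implication, and the same combination of \lemref{equal-length} for row-finiteness/uniqueness and \lemref{join-paths} for source-regularity in the converse. Your write-up is in fact slightly more careful than the paper's in spelling out the role of additive indecomposability of $\omega^\alpha$ and in explicitly excluding the case $e\in g\Lambda$ when invoking \lemref{join-paths}.
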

\begin{proof}
First assume $v$ is $\alpha$ regular, define $F=v\Lambda^{\omega^{\alpha}}$,
and suppose $f\in v\Lambda$. Since $v$ is $\alpha$ row-finite,
$F$ is finite. Moreover, either $d(f)<\omega^{\alpha}$ or $d(f)\geq\omega^{\alpha}$.
In the first case, $v$ is $\alpha$ source-regular, so there exists
$g\in\Lambda^{\omega^{\alpha}}$ such that $r(g)=s(f)$. Then $d(fg)=d(f)+d(g)=d(f)+\omega^{\alpha}=\omega^{\alpha}$.
Thus $fg\in F$ with $f\Lambda\cap fg\Lambda\not=\emptyset$. In the
second case, factor $f$ as $f=f_{\omega^{\alpha}}f^{\omega^{\alpha}}$
where $d\left(f_{\omega^{\alpha}}\right)=\omega^{\alpha}$. Then $f_{\omega^{\alpha}}\in F$
with $f\Lambda\cap f_{\omega^{\alpha}}\Lambda\not=0$, and $F$ is
exhaustive.

Now suppose $G\subseteq v\Lambda^{\omega^{\alpha}}$ is finite and
exhaustive for $v$. Then if $g\in\Lambda^{\omega^{\alpha}}$ with
$r(g)=v$, there is $f\in G$ such that $f\Lambda\cap g\Lambda\not=\emptyset$.
Since $d(f)=d(g)$, \lemref{equal-length} implies $f=g$. Thus $v$
is $\alpha$ row-finite, and $F=G$. If $e\in v\Lambda_{\alpha}$,
then choose $p\in G$ such that $e\Lambda\cap p\Lambda\not=\emptyset$.
Since $d(e)<d(p)$, \lemref{join-paths} implies $p\in e\Lambda$.
Let $q\in\Lambda$ with $p=eq$. Then $d(p)=\omega^{\alpha}=d(e)+d(q)$,
so $d(q)=\omega^{\alpha}$. Thus $v$ is $\alpha$ source-regular,
hence regular.
\end{proof}
\begin{defn}
If $F\subseteq v\Lambda$ is exhaustive for $v\in\Lambda_{0}$, we
say $F$ is minimal if whenever $e,f\in F$ and $e\Lambda\cap f\Lambda\not=\emptyset$,
$e=f$.

If we start with a finite exhaustive set, we may consider what happens
when we continually remove one path in every pair $e,f\in\Lambda$
with $e\Lambda\cap f\Lambda\not=\emptyset$. Eventually we get a minimal
exhaustive set, and for these sets, the join of the range projections
is the sum of the range projections, as the following two results
demonstrate:
\end{defn}
\begin{lem}
\label{lem:nonredundant-transitive}If $e\Lambda\cap f\Lambda\not=\emptyset$
and $e\in g\Lambda$, then $f\Lambda\cap g\Lambda\not=\emptyset$.
\end{lem}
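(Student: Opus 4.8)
The statement to prove is: if $e\Lambda\cap f\Lambda\not=\emptyset$ and $e\in g\Lambda$, then $f\Lambda\cap g\Lambda\not=\emptyset$. The plan is to chase elements through the given containments and invoke \lemref{finite-align}, which tells us that for any two paths $p,q\in\Lambda$, the intersection $p\Lambda\cap q\Lambda$ is one of $\emptyset$, $p\Lambda$, $q\Lambda$. In particular, since $e\Lambda\cap f\Lambda\not=\emptyset$, that lemma forces either $e\in f\Lambda$ or $f\in e\Lambda$ (this is exactly the first assertion of \lemref{join-paths}).

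First I would split into the two cases coming from \lemref{join-paths} applied to the hypothesis $e\Lambda\cap f\Lambda\not=\emptyset$. In the case $f\in e\Lambda$: since also $e\in g\Lambda$, composing the containments gives $f\in e\Lambda\subseteq g\Lambda$, so $f\in g\Lambda$, and then any element of $f\Lambda$ (e.g.\ $f\,s(f)=f$ itself, using that $\Lambda$ is a category with identities) witnesses $f\Lambda\cap g\Lambda\not=\emptyset$ since $f\in g\Lambda$ already gives $f\in f\Lambda\cap g\Lambda$. In the case $e\in f\Lambda$: I have $e\in f\Lambda$ and $e\in g\Lambda$, so $e\in f\Lambda\cap g\Lambda$, and in particular $f\Lambda\cap g\Lambda\not=\emptyset$ immediately. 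Either way we are done.

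Concretely, unwinding to the level of explicit path equations: $e\in g\Lambda$ means $e=gx$ for some $x\in\Lambda$ with $r(x)=s(g)$; and $e\Lambda\cap f\Lambda\not=\emptyset$ means there exist $a,b\in\Lambda$ with $ea=fb$. Then $fb=ea=gxa$, so $fb\in f\Lambda\cap g\Lambda$, which is nonempty. This one-line computation actually handles both cases at once and avoids invoking \lemref{join-paths} at all — it only uses that $\Lambda$ is closed under composition. I would likely present this short version.

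There is essentially no obstacle here; the only point requiring a moment's care is making sure the composite $gxa$ is a legal morphism, i.e.\ that the sources and ranges match up ($s(g)=r(x)$ from $e=gx$, and $s(e)=r(a)$ from the element $ea$, while $s(e)=s(gx)=s(x)$, so $s(x)=r(a)$ and $gxa$ is defined). Once the composability bookkeeping is checked, the equation $fb=gxa$ exhibits a common element of $f\Lambda$ and $g\Lambda$, completing the proof.
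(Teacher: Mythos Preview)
Your proof is correct, and in fact cleaner than the paper's. The paper also begins with the case split from \lemref{join-paths}, and handles the case $f\in e\Lambda$ exactly as you do. But in the case $e\in f\Lambda$ the paper works harder than necessary: it writes $e=gh=fk$, compares $d(g)$ with $d(f)$, and uses unique factorization to conclude that either $f\in g\Lambda$ or $g\in f\Lambda$. Your observation that $e$ itself already lies in $f\Lambda\cap g\Lambda$ bypasses all of this. Your one-line version is better still: it never invokes \lemref{join-paths} or unique factorization at all, and indeed shows that the lemma holds in any small category, not just an ordinal graph. What the paper's longer argument buys is the slightly stronger conclusion that in the second case one of $f,g$ actually extends the other, but this is not used anywhere and is not what the lemma asserts.
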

\begin{proof}
Suppose $e=gh$, and first assume $f\in e\Lambda$. Then $f\in gh\Lambda\subseteq g\Lambda$,
so $f\in f\Lambda\cap g\Lambda$. Now assume $e\in f\Lambda$. Then
there is $k\in\Lambda$ such that $e=gh=fk$. Either $d(g)<d(f)$
or $d(g)\geq d(f)$. In the first case, factor $f$ as $f=pq$ where
$d(p)=d(g)$. Then by unique factorization, $p=g$ and $f\in g\Lambda$.
If $d(g)\geq d(f)$, then factor $g$ as $g=pq$ where $d(p)=d(f)$.
Unique factorization similarly implies $p=f$, so $g\in f\Lambda$.
In any case, $g\Lambda\cap f\Lambda\not=\emptyset$.
\end{proof}
\begin{lem}
\label{lem:minimal-sum-join}If $F\subseteq v\Lambda$ is finite,
minimal, and exhaustive for $v$ and $\left\{ T_{e}:e\in\Lambda\right\} $
is a Toeplitz $\Lambda$-family, then
\[
\bigvee_{e\in F}T_{e}T_{e}^{*}=\sum_{e\in F}T_{e}T_{e}^{*}
\]
\end{lem}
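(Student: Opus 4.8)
The plan is to reduce everything to the observation that minimality of $F$ forces the range projections $\left\{T_{e}T_{e}^{*}:e\in F\right\}$ to be pairwise orthogonal, after which the join of a finite family of pairwise orthogonal projections is automatically their sum.

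First I would record that each $T_{e}T_{e}^{*}$ is a projection: by \lemref{basic-calcs}\,(a) every $T_{e}$ is a partial isometry, so $T_{e}T_{e}^{*}$ is self-adjoint and idempotent. Next, take distinct $e,f\in F$. Since $F$ is minimal, the hypothesis $e\Lambda\cap f\Lambda\neq\emptyset$ would yield $e=f$, a contradiction, so $e\Lambda\cap f\Lambda=\emptyset$. Relation 3 of \thmref{generators}, in the rewritten form available for ordinal graphs via \lemref{join-paths}, then gives $T_{e}T_{e}^{*}T_{f}T_{f}^{*}=0$; equivalently, $\bigvee_{g\in e\vee f}T_{g}T_{g}^{*}=\bigvee\emptyset=0$. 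Hence the family $\left\{T_{e}T_{e}^{*}:e\in F\right\}$ consists of pairwise orthogonal projections in the ambient $\mathrm{C}^{*}$-algebra.

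Finally I would invoke the elementary fact that a finite family $p_{1},\dots,p_{n}$ of pairwise orthogonal projections in a $\mathrm{C}^{*}$-algebra has supremum $p_{1}+\dots+p_{n}$: the sum is a projection dominating each $p_{i}$, and any projection dominating all of the $p_{i}$ dominates their sum. Applying this to $\left\{T_{e}T_{e}^{*}:e\in F\right\}$ (which is finite since $F$ is finite) gives $\bigvee_{e\in F}T_{e}T_{e}^{*}=\sum_{e\in F}T_{e}T_{e}^{*}$, as claimed. The argument works verbatim for a Toeplitz $\Lambda$-family, since only relations 1--3 and \lemref{basic-calcs} are used.

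The only point requiring a word of care is the meaning of $\bigvee$ for a finite set of projections and the verification that it coincides with the sum in the orthogonal case; everything else is an immediate consequence of the definition of minimality together with the orthogonality built into relation 3. I do not anticipate a genuine obstacle here — the content of the lemma is essentially that "minimal" was defined precisely so as to make the range projections orthogonal.
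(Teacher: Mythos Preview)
Your argument is correct and follows essentially the same route as the paper: both use minimality together with relation~3 to obtain pairwise orthogonality of the range projections $T_{e}T_{e}^{*}$, and then conclude that the join equals the sum. The paper spells this last step out as an explicit induction on subsets $G\subseteq F$ using $p\vee q=p+q-pq$, whereas you package it as the standard fact about finite families of orthogonal projections, but the content is the same.
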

\begin{proof}
We will induct on subsets of $F$. Suppose $G\subsetneq F$ and
\[
\bigvee_{e\in G}T_{e}T_{e}^{*}=\sum_{e\in G}T_{e}T_{e}^{*}
\]
Let $f\in F\backslash G$. Since $F$ is minimal, we have for all
$e\in G$
\[
T_{e}T_{e}^{*}T_{f}T_{f}^{*}=0
\]
Thus
\begin{align*}
\bigvee_{e\in G\cup\{f\}}T_{e}T_{e}^{*} & =\left(\bigvee_{e\in G}T_{e}T_{e}^{*}\right)\vee T_{f}T_{f}^{*}=\left(\sum_{e\in G}T_{e}T_{e}^{*}\right)\vee T_{f}T_{f}^{*}\\
 & =\sum_{e\in G}T_{e}T_{e}^{*}+T_{f}T_{f}^{*}-\sum_{e\in G}T_{e}T_{e}^{*}T_{f}T_{f}^{*}\\
 & =\sum_{e\in G}T_{e}T_{e}^{*}+T_{f}T_{f}^{*}=\sum_{e\in G\cup\{f\}}T_{e}T_{e}^{*}
\end{align*}
\end{proof}
Now we show that having relation 4 for all finite minimal exhaustive
sets implies relation 4 holds for all finite exhaustive sets.
\begin{prop}
\label{prop:minimal-exhaustive}Suppose $\left\{ T_{e}:e\in\Lambda\right\} $
is a Toeplitz $\Lambda$-family and for every $v\in\Lambda_{0}$ and
$F\subseteq v\Lambda_{0}$ which is finite, minimal, and exhaustive
for $v$,
\[
\sum_{f\in F}T_{f}T_{f}^{*}=T_{v}
\]
Then $\left\{ T_{e}:e\in\Lambda\right\} $ is a Cuntz-Krieger $\Lambda$-family. 
\end{prop}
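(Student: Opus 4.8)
The plan is to show that every finite exhaustive set for a vertex can be pruned down to a finite \emph{minimal} exhaustive set without changing the value of the join of the associated range projections, and then to invoke \lemref{minimal-sum-join} together with the hypothesis. Throughout write $P_e=T_eT_e^{*}$.

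First I would record the elementary facts about these projections. Since $\{T_e\}$ is a Toeplitz $\Lambda$-family it satisfies relations 1--3, and by \lemref{join-paths} the rewritten form of relation 3 applies: for all $e,f\in\Lambda$, the product $P_eP_f$ equals $P_e$ (when $e\in f\Lambda$), $P_f$ (when $f\in e\Lambda$), or $0$. In every case $P_eP_f$ is self-adjoint, so $P_eP_f=P_fP_e$; thus the $P_e$ pairwise commute, and $e\in f\Lambda$ forces $P_e\le P_f$. Moreover $T_{r(e)}T_e=T_e$ by relation 2, whence $P_e\le T_{r(e)}$. Hence for $v\in\Lambda_0$ and any finite $F\subseteq v\Lambda$ the projections $\{P_e:e\in F\}$ lie in a commutative $\mathrm{C}^{*}$-subalgebra and are dominated by $T_v$, so their join $\bigvee_{e\in F}P_e$ is the lattice supremum; it is monotone in $F$ and absorbs any projection it already dominates.

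Next I would prove by induction on $|F|$ that for every $v\in\Lambda_0$ and every finite exhaustive $F$ for $v$ there is a finite minimal exhaustive $G\subseteq F$ for $v$ with $\bigvee_{e\in F}P_e=\bigvee_{e\in G}P_e$. If $F$ is already minimal, take $G=F$. Otherwise choose distinct $e,f\in F$ with $e\Lambda\cap f\Lambda\ne\emptyset$; by \lemref{join-paths} we may assume $e\in f\Lambda$. Set $F'=F\setminus\{e\}$. Then $F'$ is still exhaustive for $v$: given $h\in v\Lambda$, exhaustiveness of $F$ gives $e'\in F$ with $e'\Lambda\cap h\Lambda\ne\emptyset$; if $e'=e$ then \lemref{nonredundant-transitive} (with $e\in f\Lambda$) gives $f\Lambda\cap h\Lambda\ne\emptyset$ and $f\in F'$, while if $e'\ne e$ then $e'\in F'$ already. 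Since $P_e\le P_f\le\bigvee_{e'\in F'}P_{e'}$, adjoining $P_e$ does not change the join, so $\bigvee_{e'\in F}P_{e'}=\bigvee_{e'\in F'}P_{e'}$. As $|F'|<|F|$, the inductive hypothesis applied to $F'$ produces the desired $G$.

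Finally, let $F$ be finite and exhaustive for $v$, and let $G\subseteq F$ be the finite minimal exhaustive set just produced. By \lemref{minimal-sum-join}, $\bigvee_{e\in G}P_e=\sum_{e\in G}P_e$, and by hypothesis $\sum_{e\in G}P_e=T_v$. Therefore $\bigvee_{e\in F}P_e=\bigvee_{e\in G}P_e=T_v$, which is exactly relation 4 of \thmref{generators} for $F$; since relations 1--3 hold by assumption, $\{T_e:e\in\Lambda\}$ is a Cuntz-Krieger $\Lambda$-family. The only point requiring care is the second paragraph --- pinning down that $\bigvee_{e\in F}P_e$ is an honest lattice supremum of commuting projections and that it behaves monotonically and absorbs dominated projections; once that is in hand, the argument is a routine application of \lemref{join-paths}, \lemref{nonredundant-transitive}, and \lemref{minimal-sum-join}.
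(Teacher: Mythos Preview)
Your proof is correct and follows essentially the same route as the paper: prune a finite exhaustive set one element at a time by removing the longer of a comparable pair, using \lemref{nonredundant-transitive} to preserve exhaustiveness and relation~3 to preserve the join, then apply \lemref{minimal-sum-join} and the hypothesis. The only cosmetic difference is that you argue abstractly via commutativity of the $P_e$ and lattice absorption, whereas the paper computes directly with $P\vee Q=P+Q-PQ$.
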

\begin{proof}
Suppose $F\subseteq v\Lambda$ is finite and exhaustive for $v$.
If $F$ is minimal, then indeed by \lemref{minimal-sum-join},
\[
\bigvee_{g\in F}T_{g}T_{g}^{*}=\sum_{g\in F}T_{g}T_{g}^{*}=T_{v}
\]
Otherwise, choose $e,f\in F$ such that $f\in e\Lambda$. Then
\begin{align*}
\bigvee_{g\in F}T_{g}T_{g}^{*} & =\left(\bigvee_{g\in F\backslash\{e,f\}}T_{g}T_{g}^{*}\right)\vee T_{e}T_{e}^{*}\vee T_{f}T_{f}^{*}\\
 & =\left(\bigvee_{g\in F\backslash\{e,f\}}T_{g}T_{g}^{*}\right)\vee\left(T_{e}T_{e}^{*}+T_{f}T_{f}^{*}-T_{e}T_{e}^{*}T_{f}T_{f}^{*}\right)\\
 & =\left(\bigvee_{g\in F\backslash\{e,f\}}T_{g}T_{g}^{*}\right)\vee\left(T_{e}T_{e}^{*}+T_{f}T_{f}^{*}-T_{f}T_{f}^{*}\right)=\bigvee_{g\in F\backslash\{f\}}T_{g}T_{g}^{*}
\end{align*}
By \lemref{nonredundant-transitive}, if $h\Lambda\cap f\Lambda\not=\emptyset$,
then $h\Lambda\cap e\Lambda\not=\emptyset$. Thus $F\backslash\left\{ f\right\} $
is exhaustive. If $F\backslash\{f\}$ is not minimal, then the process
above may be repeated. Since $\left|F\backslash\left\{ f\right\} \right|<\left|F\right|$
and $\left|F\right|$ is finite, the process eventually terminates
with a finite exhaustive minimal $F'$ such that
\[
\bigvee_{g\in F}T_{g}T_{g}^{*}=\bigvee_{g\in F'}T_{g}T_{g}^{*}=\sum_{g\in F'}T_{g}T_{g}^{*}=T_{v}
\]
\end{proof}
Once we have a finite minimal exhaustive set $F$ for $v$, we can
further reduce relation 4 by splitting $F$ into two smaller finite
minimal exhaustive sets $F_{e}$ and $F^{e}$. If relation 4 holds
for $F_{e}$ and $F^{e}$, then relation 4 holds for $F$, as shown
in the next two results.
\begin{prop}
Suppose $F\subseteq v\Lambda$ is minimal and exhaustive for $v$.
Let $e\in v\Lambda$ and suppose $ef\in F$ for some $f\in\Lambda$.
Define
\begin{align*}
F_{e} & =\left(F\backslash e\Lambda\right)\cup\{e\}\\
F^{e} & =\left\{ g\in\Lambda:s\left(e\right)=r\left(g\right)\text{ and }eg\in F\right\} 
\end{align*}
Then $F_{e}$ and $F^{e}$ are minimal. $F_{e}$ is exhaustive for
$v$ and $F^{e}$ is exhaustive for $s(e)$.
\end{prop}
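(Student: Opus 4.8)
The plan is to verify the four assertions in turn, relying on \lemref{join-paths}, the left cancellation of \propref{left-cancellative}, and---crucially---the hypothesis $ef\in F$ together with minimality of $F$. \emph{Minimality of $F^{e}$}: if $a,b\in F^{e}$ and $g=ap=bq$ lies in $a\Lambda\cap b\Lambda$, then $eg=eap=ebq$ is a common extension of $ea,eb\in F$, so minimality of $F$ gives $ea=eb$ and left cancellation gives $a=b$. \emph{Minimality of $F_{e}$}: suppose $a,b\in F_{e}$ have a common extension. If both lie in $F\setminus e\Lambda$, minimality of $F$ gives $a=b$; otherwise one of them is $e$, say $a=e$ and $b\in F\setminus e\Lambda$. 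By \lemref{join-paths} either $e\in b\Lambda$ or $b\in e\Lambda$, and the latter is excluded since $b\notin e\Lambda$. From $e\in b\Lambda$ we get $ef\in b\Lambda$, so $ef\in b\Lambda\cap(ef)\Lambda$; minimality of $F$ applied to $b,ef\in F$ then forces $b=ef\in e\Lambda$, a contradiction. Hence this case cannot arise and $F_{e}$ is minimal.

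For exhaustiveness of $F_{e}$ for $v$, take $h\in v\Lambda$ and use exhaustiveness of $F$ to find $g'\in F$ with $g'\Lambda\cap h\Lambda\neq\emptyset$; if $g'\notin e\Lambda$ then $g'\in F_{e}$ works, while if $g'\in e\Lambda$ then $g'\Lambda\subseteq e\Lambda$ gives $e\Lambda\cap h\Lambda\neq\emptyset$, so $e\in F_{e}$ works. For exhaustiveness of $F^{e}$ for $s(e)$, take $h\in s(e)\Lambda$; then $eh\in v\Lambda$, so there is $g'\in F$ with $g'\Lambda\cap(eh)\Lambda\neq\emptyset$. Since $(eh)\Lambda\subseteq e\Lambda$, this forces $g'\Lambda\cap e\Lambda\neq\emptyset$, so \lemref{join-paths} gives $g'\in e\Lambda$ or $e\in g'\Lambda$; in the second case $ef\in g'\Lambda\cap(ef)\Lambda$ and minimality of $F$ gives $g'=ef\in e\Lambda$. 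So $g'=eg$ for some $g$, whence $g\in F^{e}$, and cancelling $e$ in a common extension $ega=ehb$ of $g'$ and $eh$ yields $ga=hb\in g\Lambda\cap h\Lambda$.

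I expect the main obstacle to be the exhaustiveness of $F^{e}$: the delicate point is that a witness $g'\in F$ for the path $eh$ need not a priori lie in $e\Lambda$, and it is exactly the hypothesis $ef\in F$ (with minimality of $F$ and \lemref{join-paths}) that rules out the stray possibility $e\in g'\Lambda$. The minimality of $F_{e}$ uses the same device of pushing $ef$ into the relevant principal right ideal. Everything else is routine bookkeeping with principal right ideals and left cancellation.
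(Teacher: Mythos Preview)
Your proof is correct and follows essentially the same approach as the paper's: both use \lemref{join-paths} together with the hypothesis $ef\in F$ and minimality of $F$ to handle the awkward case $e\in g'\Lambda$, and both use left cancellation to pass between common extensions in $e\Lambda$ and in $s(e)\Lambda$. Your treatment of exhaustiveness of $F^{e}$ is in fact slightly more streamlined than the paper's---you first reduce to $g'\in e\Lambda$ in all cases and then cancel once, whereas the paper splits into the sub-cases $p\in ek\Lambda$ versus $ek\in p\Lambda$ before invoking the $ef$ trick---but the substance is the same.
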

\begin{proof}
Let $k\in v\Lambda$ be given. Choose $p\in F$ such that $p\Lambda\cap k\Lambda\not=\emptyset$.
If $p\not\in e\Lambda$, then $p\in F_{e}$. Otherwise by \lemref{nonredundant-transitive},
$e\Lambda\cap k\Lambda\not=\emptyset$. Therefore, $F_{e}$ is exhaustive.
We wish to prove $F_{e}$ is minimal. To do so, assume $p,q\in F_{e}$
and $p\Lambda\cap q\Lambda\not=\emptyset$. If $p\in F\backslash e\Lambda$
and $q=e$, then by \lemref{join-paths}, $e\in p\Lambda$. This means
$ef\in p\Lambda$, and since $F$ is minimal, $ef=p$, contradicting
$p\not\in e\Lambda$. Thus we may assume without loss of generality
that either $\left\{ p,q\right\} \subseteq F\backslash e\Lambda$
or $p=q=e$. In the first case, minimality of $F$ implies $p=q$,
so in any case we have $p=q$. Thus $F_{e}$ is minimal.

Now, let $k\in s(e)\Lambda$ be given. Choose $p\in F$ such that
$p\Lambda\cap ek\Lambda\not=\emptyset$. If $p\in ek\Lambda$, choose
$q\in\Lambda$ with $p=ekq$. Then $kq\in F^{e}$ and $kq\in k\Lambda\cap kq\Lambda$.
On the other hand, if $ek\in p\Lambda$, choose $q\in\Lambda$ with
$ek=pq$. Then $ek\in e\Lambda\cap p\Lambda$, so by \lemref{join-paths},
either $p\in e\Lambda$ or $e\in p\Lambda$. In the first case, choose
$j\in\Lambda$ such that $p=ej$. Since $ek=ejq$, $k=jq$ and $k\in j\Lambda$.
Moreover, $j\in F^{e}$ since $p\in F$. In the second case, $e\in p\Lambda$,
hence $ef\in p\Lambda$. Minimality of $F$ implies $ef=p$, hence
$ek=efq$, and $k=fq\in f\Lambda$. As $f\in F^{e}$, this implies
$F^{e}$ is exhaustive for $s(e)$.

Finally, suppose $g,h\in F^{e}$ with $g\Lambda\cap h\Lambda\not=\emptyset$.
Then $eg\Lambda\cap eh\Lambda\not=\emptyset$, so by minimality of
$F$, $eg=eh$. Then $g=h$, and $F^{e}$ is minimal.
\end{proof}
\begin{prop}
\label{prop:exhaustive-cuts-relations}Suppose $\left\{ T_{e}:e\in\Lambda\right\} $
is a Toeplitz $\Lambda$-family. Assume $v\in\Lambda_{0}$, $e\in v\Lambda$,
$F\subseteq v\Lambda$ with $F$ finite, minimal, and exhaustive for
$v$, and $F\cap e\Lambda\not=\emptyset$. If
\begin{align*}
\sum_{f\in F_{e}}T_{f}T_{f}^{*} & =T_{v}\\
\sum_{f\in F^{e}}T_{f}T_{f}^{*} & =T_{s(e)}
\end{align*}
then
\[
\sum_{f\in F}T_{f}T_{f}^{*}=T_{v}
\]
\end{prop}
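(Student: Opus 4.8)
The plan is to split the sum $\sum_{f\in F}T_fT_f^{*}$ according to whether $f\in e\Lambda$, and then identify the two resulting pieces with the sums over $F_{e}$ and $F^{e}$ from the hypotheses.

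First I would check that $g\mapsto eg$ is a bijection from $F^{e}$ onto $F\cap e\Lambda$: it lands in $F\cap e\Lambda$ by the definition of $F^{e}$; it is injective by left cancellation (\propref{left-cancellative}); and it is surjective, since any $f\in F\cap e\Lambda$ can be written $f=eh$ with $s(e)=r(h)$, so that $h\in F^{e}$. Since $e=es(e)\in e\Lambda$, the union $F_{e}=(F\setminus e\Lambda)\cup\{e\}$ is disjoint, hence
\[
\sum_{f\in F_{e}}T_fT_f^{*}=\sum_{f\in F\setminus e\Lambda}T_fT_f^{*}+T_eT_e^{*}.
\]

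Next I would evaluate the part of the sum coming from $F\cap e\Lambda$. Using the bijection above together with relation 2, for $g\in F^{e}$ we have $T_{eg}T_{eg}^{*}=T_eT_gT_g^{*}T_e^{*}$, and therefore
\[
\sum_{f\in F\cap e\Lambda}T_fT_f^{*}=T_e\Bigl(\sum_{g\in F^{e}}T_gT_g^{*}\Bigr)T_e^{*}=T_eT_{s(e)}T_e^{*}=T_eT_e^{*},
\]
where the middle equality is the hypothesis $\sum_{f\in F^{e}}T_fT_f^{*}=T_{s(e)}$ and the last uses $T_eT_{s(e)}=T_{es(e)}=T_e$ (relation 2) and its adjoint. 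Combining this with the disjoint decomposition $F=(F\setminus e\Lambda)\sqcup(F\cap e\Lambda)$ yields
\[
\sum_{f\in F}T_fT_f^{*}=\sum_{f\in F\setminus e\Lambda}T_fT_f^{*}+T_eT_e^{*}=\sum_{f\in F_{e}}T_fT_f^{*}=T_v,
\]
the last step being the hypothesis on $F_{e}$.

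I do not expect a serious obstacle here; the only point requiring care is verifying that $g\mapsto eg$ is genuinely a bijection onto $F\cap e\Lambda$ (so that no range projection $T_fT_f^{*}$ is double-counted or dropped when regrouping) and that the union defining $F_{e}$ is disjoint — this is precisely where left cancellation and the observation $e\in e\Lambda$ enter.
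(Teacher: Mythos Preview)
Your proof is correct and follows essentially the same route as the paper: both split $\sum_{f\in F}T_fT_f^{*}$ into the pieces over $F\setminus e\Lambda$ and $F\cap e\Lambda$, rewrite the latter as $T_e\bigl(\sum_{g\in F^{e}}T_gT_g^{*}\bigr)T_e^{*}=T_eT_e^{*}$ via the bijection $g\mapsto eg$, and then recognize the result as $\sum_{f\in F_e}T_fT_f^{*}$. Your version is slightly more explicit about checking the bijection and the disjointness of $(F\setminus e\Lambda)\cup\{e\}$, but the argument is the same.
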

\begin{proof}
We compute
\begin{align*}
\sum_{f\in F}T_{f}T_{f}^{*} & =\sum_{f\in F\cap e\Lambda}T_{f}T_{f}^{*}+\sum_{f\in F\backslash e\Lambda}T_{f}T_{f}^{*}\\
 & =\sum_{g\in F^{e}}T_{eg}T_{eg}^{*}+\sum_{f\in F_{e}}T_{f}T_{f}^{*}-T_{e}T_{e}^{*}\\
 & =T_{e}\left(\sum_{g\in F^{e}}T_{g}T_{g}^{*}\right)T_{e}^{*}-T_{e}T_{e}^{*}+\sum_{f\in F_{e}}T_{f}T_{f}^{*}\\
 & =T_{e}T_{e}^{*}-T_{e}T_{e}^{*}+\sum_{f\in F_{e}}T_{f}T_{f}^{*}=\sum_{f\in F_{e}}T_{f}T_{f}^{*}=T_{v}
\end{align*}
\end{proof}
\begin{lem}
If $\alpha\in\mathrm{Ord}$ and $F\subseteq v\Lambda^{\alpha}$ is
exhaustive for $v$, then $F$ is minimal.
\end{lem}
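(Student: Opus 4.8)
The plan is to reduce the statement immediately to \lemref{equal-length}, whose hypotheses are met essentially by fiat. The key observation is that every path in $v\Lambda^{\alpha}$ has the same length, so any two members of $F$ are paths out of $v$ of equal length whose cylinders intersect, and \lemref{equal-length} already tells us two such paths must coincide.

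Concretely, I would argue as follows. Let $e,f\in F$ with $e\Lambda\cap f\Lambda\neq\emptyset$. Since $e,f\in v\Lambda^{\alpha}$ they have a common length, $d(e)=d(f)$. Then \lemref{equal-length} applies directly and yields $e=f$. As this holds for every such pair, $F$ satisfies the defining condition of a minimal exhaustive set, which is exactly the claim.

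There is no real obstacle here: the content is carried entirely by \lemref{equal-length}, and the hypothesis that $F$ is exhaustive plays no role in the argument beyond making the word ``minimal'' applicable to $F$ in the first place, minimality having been defined only for exhaustive sets. The only point worth flagging is conceptual rather than technical, namely that this lemma records that an exhaustive set consisting of paths of a common length is automatically non-redundant; this is precisely what lets the later reduction to exhaustive subsets of $v\Lambda^{\omega^{\alpha}}$ be combined with \lemref{minimal-sum-join} and \propref{minimal-exhaustive} without additional bookkeeping.
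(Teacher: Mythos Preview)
Your proof is correct and takes essentially the same approach as the paper: the paper argues directly that if $f\in e\Lambda$ with $d(e)=d(f)$ then the complementary factor has length $0$, which is exactly the content of \lemref{equal-length} that you invoke. Your observation that exhaustiveness is not used in the argument (beyond making the word ``minimal'' applicable) is also accurate.
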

\begin{proof}
Suppose without loss of generality that $e,f\in F$ and $f\in e\Lambda$.
Write $f=eg$ for some $g\in\Lambda$. Then
\[
d\left(f\right)=\alpha=d\left(e\right)+d\left(g\right)=\alpha+d\left(g\right)
\]
Hence $d(g)=0$, and $eg=e$.
\end{proof}
This suggests that we consider only exhaustive sets whose members
have a fixed length. In fact, knowing relation 4 holds for these sets
implies it holds for all finite exhaustive sets.
\begin{prop}
Suppose $\left\{ T_{e}:e\in\Lambda\right\} $ is a Toeplitz $\Lambda$-family
such that for all $v\in\Lambda_{0}$, $\alpha\in\mathrm{Ord}$, and
finite $F\subseteq v\Lambda^{\alpha}$ which are exhaustive for $v$,
\[
\sum_{e\in F}T_{e}T_{e}^{*}=T_{v}
\]
Then $\left\{ T_{e}:e\in\Lambda\right\} $ is a Cuntz-Krieger $\Lambda$-family.
\end{prop}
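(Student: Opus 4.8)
The plan is to reduce to finite minimal exhaustive sets and then run a well-founded induction. By \propref{minimal-exhaustive} it suffices to show $\sum_{f\in F}T_{f}T_{f}^{*}=T_{v}$ whenever $v\in\Lambda_{0}$ and $F$ is a finite minimal exhaustive set for $v$. I would prove this by induction on the pair $(|F|,\min_{f\in F}d(f))$, ordered lexicographically with $|F|\in\mathbb{N}$ in the first slot and $\min_{f\in F}d(f)\in\mathrm{Ord}$ in the second; this is a well-founded order, since a lexicographically decreasing sequence must eventually be constant in $\mathbb{N}$ and then strictly decreasing in the well-ordered second slot. The base case is $m:=\min_{f\in F}d(f)=0$: some $f\in F$ is then a vertex, a vertex lying in $v\Lambda$ must equal $v$, and minimality of $F$ forces $F=\{v\}$, so $\sum_{f\in F}T_{f}T_{f}^{*}=T_{v}T_{v}^{*}=T_{v}$.

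For the inductive step, assume $m>0$ and let $\omega^{\gamma}$ be the leading power of $\omega$ in the Cantor normal form of $m$, so that $\omega^{\gamma}\le m\le d(f)$ for every $f\in F$ and $-\omega^{\gamma}+m<m$. Using \defref{factor}, write $f=p_{f}q_{f}$ where $p_{f}=f_{\omega^{\gamma}}$ (of length $\omega^{\gamma}$) and $q_{f}=f^{\omega^{\gamma}}$, and set $P=\{p_{f}:f\in F\}\subseteq v\Lambda^{\gamma}$, $F_{p}=\{f\in F:p_{f}=p\}$, and $S_{p}=\{q_{f}:f\in F_{p}\}\subseteq s(p)\Lambda$. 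The key observations are: (i) $P$ is finite and exhaustive for $v$ — exhaustiveness because $p_{f}\Lambda\supseteq f\Lambda$, so any common extension of $f$ and $k$ is also a common extension of $p_{f}$ and $k$ — so the hypothesis applied to $P\subseteq v\Lambda^{\gamma}$ yields $\sum_{p\in P}T_{p}T_{p}^{*}=T_{v}$; and (ii) each $S_{p}$ is a finite minimal exhaustive set for $s(p)$. Minimality of $S_{p}$ holds because a common extension of $q_{f}$ and $q_{f'}$ becomes, after left-multiplication by $p$, a common extension of $f$ and $f'$, forcing $f=f'$; exhaustiveness holds because for $k\in s(p)\Lambda$, applying exhaustiveness of $F$ to $pk$ gives $f\in F$ with $f\Lambda\cap pk\Lambda\ne\emptyset$, and then \lemref{join-paths}, the inequality $d(f)\ge\omega^{\gamma}=d(p)$, and left cancellation of $p$ force $p_{f}=p$ and $q_{f}\Lambda\cap k\Lambda\ne\emptyset$.

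Granting (i) and (ii), I would compute, using $F=\bigsqcup_{p\in P}F_{p}$, the identity $T_{f}=T_{p}T_{q_{f}}$ for $f\in F_{p}$, and injectivity of $f\mapsto q_{f}$ on each $F_{p}$,
\[
\sum_{f\in F}T_{f}T_{f}^{*}=\sum_{p\in P}T_{p}\Bigl(\sum_{s\in S_{p}}T_{s}T_{s}^{*}\Bigr)T_{p}^{*},
\]
so it remains to see that $\sum_{s\in S_{p}}T_{s}T_{s}^{*}=T_{s(p)}$ for each $p$ (then $T_{p}T_{s(p)}T_{p}^{*}=T_{p}T_{p}^{*}$ and the sum collapses to $T_{v}$). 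If $|P|\ge2$, then $|S_{p}|=|F_{p}|<|F|$, so the induction hypothesis applies to $(S_{p},s(p))$. If $|P|=1$, say $P=\{p\}$, then $F\subseteq p\Lambda$; when $m=\omega^{\gamma}$ the shortest element of $F$ is $p$ itself and minimality forces $F=\{p\}$, whence $S_{p}=\{s(p)\}$ and the identity is immediate; when $m>\omega^{\gamma}$ we have $|S_{p}|=|F|$ but $\min_{s\in S_{p}}d(s)=-\omega^{\gamma}+m<m$, so the pair strictly decreases in the second slot and the induction hypothesis again applies.

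The step I expect to be the main obstacle is choosing the right induction parameter. The naive approach — peel off a prefix whose length is the leading power of the \emph{longest} path in $F$ — fails, because ordinal addition absorbs: removing $\omega^{\gamma}$ from a path of length $\omega^{\delta}$ with $\gamma<\delta$ leaves a suffix of length $\omega^{\delta}$ again, so the longest length need not decrease. Peeling the leading power of the \emph{shortest} length $m$ instead guarantees every $f\in F$ is long enough to split, and produces genuine progress in $(|F|,m)$: either the number of paths drops, or all the length-$\omega^{\gamma}$ prefixes coincide and $m$ strictly decreases. The other delicate point is verifying exhaustiveness of the sets $S_{p}$, where the common-extension dichotomy of \lemref{join-paths} and left cancellation have to be used together.
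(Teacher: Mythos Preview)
Your proof is correct, but it is organized differently from the paper's. Both arguments first reduce to finite \emph{minimal} exhaustive sets via \propref{minimal-exhaustive}, then peel off a common-length prefix from every path, apply the hypothesis to the set of prefixes, and recurse on the suffixes. The differences are in the choice of prefix length and the induction scheme. The paper splits at $m=\min_{f\in F}d(f)$ itself and inducts only on $|F|$: it processes the prefixes $p_1,\dots,p_n$ one at a time using the $F_e/F^e$ cut of \propref{exhaustive-cuts-relations}, and the main technical work is a separate argument that each $|G^k|<n$. You instead split at the leading Cantor power $\omega^{\gamma}$ of $m$, group all paths by their prefix at once, and write the clean identity $\sum_{f\in F}T_fT_f^{*}=\sum_{p\in P}T_p\bigl(\sum_{s\in S_p}T_sT_s^{*}\bigr)T_p^{*}$; the price is that when $|P|=1$ the cardinality need not drop, so you need the second induction parameter $m$. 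Your grouping-by-prefix is arguably tidier than the paper's iterated cutting, while the paper's choice of split length is slightly sharper: had you split at $m$ rather than $\omega^{\gamma}$, the case $|P|=1$ would already force $|F|=1$ (since the shortest element is then a prefix of every other, contradicting minimality), and single induction on $|F|$ would suffice. One notational slip: in this proposition $v\Lambda^{\alpha}$ is being used for paths of length exactly $\alpha$ (cf.\ the proof of the preceding lemma), so your set of prefixes lies in $v\Lambda^{\omega^{\gamma}}$, not $v\Lambda^{\gamma}$.
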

\begin{proof}
By \propref{minimal-exhaustive}, it suffices to let $F$ be an arbitrary
finite, minimal, exhaustive set for $v$ and induct on $|F|$ to prove
that $\sum_{f\in F}T_{f}T_{f}^{*}=T_{v}$. Towards that end, note
that if $|F|=1$ and $f\in F$, then by hypothesis $T_{f}T_{f}^{*}=T_{v}$.
Now suppose that $G$ is minimal and exhaustive for $v$, $|G|=n$,
and that for all $u\in\Lambda_{0}$ with minimal and exhaustive $F$
for $u$ such that $|F|<n$, $\sum_{f\in F}T_{f}T_{f}^{*}=T_{u}$.
Let $m=\min_{g\in G}d\left(g\right)$, and label $G$ as $G=\left\{ g_{1},\ldots g_{n}\right\} $
where $d\left(g_{k}\right)\leq d\left(g_{k+1}\right)$. For each $1\leq k\leq n$,
factor $g_{k}$ as $g_{k}=p_{k}q_{k}$, where $d\left(p_{k}\right)=m$.
Define $G_{0}=G$, $G_{1}=G_{p_{1}}$, and note that since $G$ is
minimal and $g_{1}=p_{1}$, $G_{1}=G$. For $k\geq1$, define $G_{k}=\left(G_{k-1}\right)_{p_{k}}$,
$G^{k}=\left(G_{k-1}\right)^{p_{k}}$. Then $G_{n}=\left\{ p_{1},\ldots p_{n}\right\} \subseteq\Lambda^{m}$
is minimal and exhaustive for $v$, so by hypothesis
\[
\sum_{k=1}^{n}T_{p_{k}}T_{p_{k}}^{*}=T_{v}
\]
Moreover, if $1\leq k\leq n$, $G^{k}$ is minimal and exhaustive
for $s\left(p_{k}\right)$. If $\left|G^{k}\right|<n$ for $1\leq k\leq n$,
then by the induction hypothesis,
\[
\sum_{f\in G^{k}}T_{f}T_{f}^{*}=T_{s\left(p_{k}\right)}
\]
By repeatedly applying \propref{exhaustive-cuts-relations}, we see
$G_{n-1},G_{n-2},\ldots,G_{1}$ satisfy relation 4. That is, for $1\leq k\leq n$,
\[
\sum_{f\in G_{k}}T_{f}T_{f}^{*}=T_{v}
\]
Since $G_{1}=G$, we have the desired identity.

All that remains is the case when $\left|G^{k}\right|=n$ for some
$1<k\leq n$. First we prove that $g_{1}\in G_{k}$ for $1\leq k\leq n$.
Since $G_{1}=G_{p_{1}}$, $g_{1}=p_{1}$, and $g_{1}\in G$, we have
$g_{1}\in G_{1}$. Moreover if $g_{1}\in G_{k}\backslash G_{k+1}$,
then $g_{1}=p_{1}\in p_{k+1}\Lambda$. As $d\left(p_{k+1}\right)\geq d\left(p_{1}\right)$,
this implies $p_{1}=p_{k+1}$. Then $g_{k+1}=p_{k+1}q_{k+1}=g_{1}q_{k+1}$,
and because $G$ is minimal, $g_{k+1}=g_{1}$. This contradicts the
assumption that $\left|G\right|=n$, therefore we must have $g_{1}\in G_{k}$
for all $1\leq k\leq n$. Suppose for some $1<k\leq n$, $\left|G^{k}\right|=n$.
Then since $g_{1}\in G_{k-1}$, $p_{1}\in p_{k}\Lambda$. Since $d\left(p_{1}\right)\leq d\left(p_{k}\right)$,
this implies $g_{1}=p_{k}$. Hence $g_{k}\in g_{1}\Lambda$, and by
minimality of $G$, $g_{1}=g_{k}$. Once again, this contradicts $\left|G\right|=n$,
and hence $\left|G^{k}\right|<n$ for $1<k\leq n$. 
\end{proof}
Finally, we may instead only consider finite and exhaustive sets whose
members have lengths which are a fixed power of $\omega$.
\begin{thm}
\label{thm:finite-exhaustive-omega-alpha}Suppose $\left\{ T_{e}:e\in\Lambda\right\} $
is a Toeplitz $\Lambda$-family such that for all $v\in\Lambda_{0}$,
$\alpha\in\mathrm{Ord}$, and $F\subseteq v\Lambda^{\omega^{\alpha}}$
finite and exhaustive for $v$,
\[
\sum_{e\in F}T_{e}T_{e}^{*}=T_{v}
\]
Then $\left\{ T_{e}:e\in\Lambda\right\} $ is a Cuntz-Krieger $\Lambda$-family.
\end{thm}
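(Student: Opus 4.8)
The plan is to invoke the preceding proposition, which reduces the claim to the following statement: for every $v\in\Lambda_0$, every ordinal $\gamma$, and every finite $F\subseteq v\Lambda^{\gamma}$ exhaustive for $v$, one has $\sum_{f\in F}T_fT_f^*=T_v$. I would prove this by induction on the number of terms in the Cantor normal form of $\gamma$ (this is well-founded; one may also phrase it as transfinite induction on $\gamma$, writing $\gamma=\omega^{\alpha}+\gamma'$ with $\omega^{\alpha}$ the leading power of $\omega$ in the Cantor normal form of $\gamma$, so that $\gamma'<\gamma$). The base cases are $\gamma=0$, where $v\Lambda^{0}=\{v\}$, the only finite exhaustive set is $\{v\}$, and $T_vT_v^*=T_v$ since $T_v$ is a projection; and $\gamma=\omega^{\alpha}$, which is precisely the hypothesis.

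For the inductive step, write $\gamma=\omega^{\alpha}+\gamma'$ with $\gamma'$ having strictly fewer Cantor-normal-form terms. For $f\in F$, factor $f=f_{\omega^{\alpha}}f^{\omega^{\alpha}}$ as in \defref{factor} (legitimate since $\omega^{\alpha}\leq\gamma=d(f)$), and set
$$P=\{f_{\omega^{\alpha}}:f\in F\}\subseteq v\Lambda^{\omega^{\alpha}},\qquad F^{p}=\{g\in\Lambda:r(g)=s(p),\ pg\in F\}\subseteq s(p)\Lambda^{\gamma'}\ \ (p\in P),$$
both of which are finite. The combinatorial heart of the argument is to check that $P$ is exhaustive for $v$ and each $F^{p}$ is exhaustive for $s(p)$. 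For $P$: given $h\in v\Lambda$, choose $f\in F$ with $f\Lambda\cap h\Lambda\neq\emptyset$; by \lemref{join-paths} either $h\in f\Lambda$, in which case $h\in f_{\omega^{\alpha}}\Lambda$, or $f\in h\Lambda$, in which case, comparing $d(h)$ with $\omega^{\alpha}$ and applying the factorization identities, one gets either $f_{\omega^{\alpha}}\in h\Lambda$ or $h\in f_{\omega^{\alpha}}\Lambda$; in every case $f_{\omega^{\alpha}}\Lambda\cap h\Lambda\neq\emptyset$. For $F^{p}$: given $k\in s(p)\Lambda$, apply exhaustiveness of $F$ to $pk\in v\Lambda$ to obtain $f\in F$ with $f\Lambda\cap pk\Lambda\neq\emptyset$; \lemref{join-paths} together with unique factorization forces $f_{\omega^{\alpha}}=p$, whence $f^{\omega^{\alpha}}\in F^{p}$, and left cancellation (\propref{left-cancellative}) gives $f^{\omega^{\alpha}}\Lambda\cap k\Lambda\neq\emptyset$.

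Once exhaustiveness is in hand, the map $f\mapsto(f_{\omega^{\alpha}},f^{\omega^{\alpha}})$ is a bijection from $F$ onto $\bigsqcup_{p\in P}\{p\}\times F^{p}$, with inverse $(p,g)\mapsto pg$. Using relation 2 of \thmref{generators} (so $T_{pg}=T_pT_g$) and the induction hypothesis applied to $F^{p}\subseteq s(p)\Lambda^{\gamma'}$,
\[
\sum_{f\in F}T_fT_f^*
=\sum_{p\in P}\sum_{g\in F^{p}}T_{pg}T_{pg}^*
=\sum_{p\in P}T_p\Bigl(\sum_{g\in F^{p}}T_gT_g^*\Bigr)T_p^*
=\sum_{p\in P}T_pT_{s(p)}T_p^*
=\sum_{p\in P}T_pT_p^*.
\]
Since $P\subseteq v\Lambda^{\omega^{\alpha}}$ is finite and exhaustive for $v$, the hypothesis of the theorem yields $\sum_{p\in P}T_pT_p^*=T_v$, completing the induction and hence, via the preceding proposition, the proof.

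I expect the main obstacle to be the two exhaustiveness verifications: keeping track of which of the two alternatives of \lemref{join-paths} occurs, and discharging the resulting subcases with the factorization calculus established earlier. The concluding algebraic computation is then routine.
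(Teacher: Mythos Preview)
Your proposal is correct and follows essentially the same strategy as the paper's proof: transfinite induction on the length $\gamma$, peeling off the leading $\omega^{\alpha}$-segment of each path in $F$. The paper organises the step slightly differently, defining $G_0=G$, $G_k=(G_{k-1})_{p_k}$, $G^k=(G_{k-1})^{p_k}$ and applying \propref{exhaustive-cuts-relations} once for each element of $G$, whereas you collect all initial segments into $P$ at once and do a single direct computation; your version is a little more streamlined but the underlying idea is identical. One small remark: your phrase ``induction on the number of terms in the Cantor normal form'' is ambiguous (peeling one $\omega^{\alpha}$ from $\omega^{\alpha}\cdot 3$ does not reduce the number of distinct terms), but since you also note that $\gamma'<\gamma$ and that transfinite induction on $\gamma$ works, this causes no real problem.
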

\begin{proof}
We will prove by transfinite induction that for all $v\in\Lambda_{0}$
and $\beta\in\mathrm{Ord}$, if $G\subseteq v\Lambda^{\beta}$ is
finite and exhaustive, then
\[
\sum_{e\in G}T_{e}T_{e}^{*}=T_{v}
\]
Notice that if $\beta=0$ and $G\subseteq v\Lambda^{\beta}$ is finite
and exhaustive, then $G=\{v\}$, so the statement is true. Now suppose
the statement is true for all $\gamma<\beta$, and let $G\subseteq v\Lambda^{\beta}$
be finite and exhaustive. First, choose $\alpha\in\mathrm{Ord}$ such
that $\omega^{\alpha}\leq\beta<\omega^{\alpha+1}$, and note that
\thmref{base-expansion} implies $-\omega^{\alpha}+\beta<\beta$.
Label the elements of $G$ as $G=\left\{ g_{1},\ldots g_{n}\right\} $.
For each $1\leq k\leq n$, factor $g_{k}$ as $g_{k}=p_{k}q_{k}$
where $d\left(p_{k}\right)=\omega^{\alpha}$. Then $d\left(g_{k}\right)=\beta=d\left(p_{k}\right)+d\left(q_{k}\right)=\omega^{\alpha}+d\left(q_{k}\right)$,
hence $d\left(q_{k}\right)=-\omega^{\alpha}+\beta$. Define $G_{0}=G$,
and for $1\leq k\leq n$, $G_{k}=\left(G_{k-1}\right)_{p_{k}}$ and
$G^{k}=\left(G_{k-1}\right)^{p_{k}}$. Then $G_{n}=\left\{ p_{1},\ldots p_{n}\right\} $,
and $G^{k}\subseteq s\left(p_{k}\right)\Lambda^{-\omega^{\alpha}+\beta}$.
By hypothesis, $\sum_{e\in G_{n}}T_{e}T_{e}^{*}=T_{v}$. Moreover,
since $-\omega^{\alpha}+\beta<\beta$, the inductive hypothesis implies
that $\sum_{e\in G^{k}}T_{e}T_{e}^{*}=T_{s\left(p_{k}\right)}$ for
each $k$. Applying \propref{exhaustive-cuts-relations} repeatedly,
we see
\[
\sum_{e\in G_{k}}T_{e}T_{e}^{*}=T_{v}
\]
for each $k$, and in particular, for $k=0$.
\end{proof}
\begin{cor}
\label{cor:gen-relations}$\mathcal{O}\left(\Lambda\right)$ is the
universal $C^{*}$-algebra generated by a family of operators $\left\{ T_{e}:e\in\Lambda\right\} $
satisfying the following relations:
\end{cor}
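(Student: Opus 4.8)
The plan is to show that a family $\{T_e : e\in\Lambda\}$ in a $\mathrm{C}^{*}$-algebra satisfies the relations listed in the corollary if and only if it is a Cuntz--Krieger $\Lambda$-family in the sense of \thmref{generators}. Once the two notions of family coincide, the claimed isomorphism is immediate from the universal property recorded in \thmref{generators}: the comparison $*$-homomorphisms in either direction send generators to generators, hence are mutually inverse.

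For the forward implication, suppose $\{T_e\}$ is a Cuntz--Krieger $\Lambda$-family. Relations 1 and 2 are literally among relations 1--4 of \thmref{generators}. For relation 3, \lemref{join-paths} shows that $e\vee f$ is empty when $e\Lambda\cap f\Lambda=\emptyset$ and otherwise is the singleton $\{e\}$ or $\{f\}$; substituting this into relation 3 of \thmref{generators} produces exactly the case expression (and hence, a fortiori, the weaker orthogonality-only form via \lemref{basic-calcs}). For relation 4, any $F\subseteq v\Lambda^{\omega^{\alpha}}$ that is exhaustive for $v$ is automatically minimal (by the lemma in this section stating that exhaustive subsets of $v\Lambda^{\alpha}$ are minimal), so \lemref{minimal-sum-join} converts $\bigvee_{e\in F}T_eT_e^{*}=T_v$ into $\sum_{e\in F}T_eT_e^{*}=T_v$.

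For the reverse implication, suppose $\{T_e\}$ satisfies the corollary's relations. Relations 1 and 2 reproduce the first two relations of \thmref{generators}. Rerunning the computations of \lemref{basic-calcs} starting from the corollary's relation 3 recovers the full case form of relation 3, and then \lemref{join-paths} rewrites it as relation 3 of \thmref{generators}. Finally, the corollary's relation 4 is precisely the hypothesis of \thmref{finite-exhaustive-omega-alpha}, whose conclusion is that $\{T_e\}$ is a Cuntz--Krieger $\Lambda$-family; in particular relation 4 of \thmref{generators} holds for every finite exhaustive set.

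I do not anticipate a genuine obstacle here: the substantive work is already carried out in \thmref{finite-exhaustive-omega-alpha} and its supporting lemmas in this section. The only point that needs care is bookkeeping --- checking that whichever form of relation 3 is used in the corollary statement (the case form, or the weaker orthogonality-only form flagged in the remark after \thmref{generators}) is, together with relations 1 and 2 and \lemref{basic-calcs}, strong enough to regenerate relation 3 of \thmref{generators}, and confirming that no instance of relation 4 is lost in restricting attention to the sets $v\Lambda^{\omega^{\alpha}}$, which is exactly what \thmref{finite-exhaustive-omega-alpha} asserts.
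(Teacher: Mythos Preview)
Your approach is essentially the paper's: both directions rest on \thmref{finite-exhaustive-omega-alpha}, with the forward direction being a routine check that the corollary's relations follow from those of \thmref{generators}. The one ingredient you leave implicit but the paper invokes explicitly is \thmref{regular-vertex-exhaustive-sets}: the corollary's relation~4 is phrased in terms of $\alpha$-regular vertices and the set $v\Lambda^{\omega^{\alpha}}$, whereas the hypothesis of \thmref{finite-exhaustive-omega-alpha} is phrased in terms of finite exhaustive $F\subseteq v\Lambda^{\omega^{\alpha}}$; \thmref{regular-vertex-exhaustive-sets} says these coincide (and that then $F=v\Lambda^{\omega^{\alpha}}$), so citing it completes the translation in both directions.
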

\begin{enumerate}
\item $T_{e}^{*}T_{e}=T_{s(e)}$
\item $T_{e}T_{f}=\begin{cases}
T_{ef} & s(e)=r(f)\\
0 & \text{otherwise}
\end{cases}$
\item $T_{e}T_{e}^{*}T_{f}T_{f}^{*}=T_{f}T_{f}^{*}$ if $f\in e\Lambda$
\item $T_{v}=\sum_{e\in v\Lambda^{\omega^{\alpha}}}T_{e}T_{e}^{*}$ if $v$
is $\alpha$ regular
\end{enumerate}
\begin{proof}
By \thmref{regular-vertex-exhaustive-sets}, if $F\subseteq v\Lambda^{\omega^{\alpha}}$
is finite and exhaustive for $v$, then $F=v\Lambda^{\omega^{\alpha}}$
and $v$ is $\alpha$ regular. Thus by \thmref{finite-exhaustive-omega-alpha},
the $\mathrm{C}^{*}$-algebra generated by the relations above is
a Cuntz-Krieger $\Lambda$-family. Moreover, for every $\alpha$ regular
vertex $v$, $v\Lambda^{\omega^{\alpha}}$ is finite and exhaustive,
and thus the representation of $\mathcal{O}\left(\Lambda\right)$
is an isomorphism.
\end{proof}
Frequently, it is convenient to regard $\mathcal{O}\left(\Lambda\right)$
as a $\mathrm{C}^{*}$-algebra generated by the paths in $\Lambda^{\omega^{\alpha}}$
instead of all paths. In the following results, we give generators
and relations for this context.
\begin{lem}
\label{lem:3-20}Each $e\in\Lambda$ with $d(e)>0$ can be written
uniquely as $e=f_{1}f_{2}\ldots f_{n}$ where for each $k$, $f_{k}\in\Lambda^{\omega^{\beta_{k}}}$
and $\beta_{k}\geq\beta_{k+1}$.
\end{lem}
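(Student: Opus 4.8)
The plan is to read the decomposition off the Cantor normal form of $d(e)$ and then use unique factorization to identify the factors uniquely.

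For existence, since $d(e)>0$ I would apply \thmref{base-expansion} with base $\omega$ to write $d(e)=\omega^{\alpha_1}\cdot\gamma_1+\cdots+\omega^{\alpha_m}\cdot\gamma_m$ with $\alpha_1\geq\cdots\geq\alpha_m$ and positive integers $\gamma_i$, and then expand each $\omega^{\alpha_i}\cdot\gamma_i$ as a $\gamma_i$-fold sum $\omega^{\alpha_i}+\cdots+\omega^{\alpha_i}$ to obtain $d(e)=\omega^{\beta_1}+\omega^{\beta_2}+\cdots+\omega^{\beta_n}$ for a finite weakly decreasing sequence $\beta_1\geq\cdots\geq\beta_n$. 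Next I peel off factors recursively: set $g_0=e$, and given $g_{k-1}$ with $d(g_{k-1})=\omega^{\beta_k}+\cdots+\omega^{\beta_n}$, the inequality $\omega^{\beta_k}\leq d(g_{k-1})$ lets \defref{An-ordinal-graph} produce unique $f_k,g_k\in\Lambda$ with $d(f_k)=\omega^{\beta_k}$ and $g_{k-1}=f_kg_k$; since $d$ is a functor and ordinal addition is left cancellative, $d(g_k)=\omega^{\beta_{k+1}}+\cdots+\omega^{\beta_n}$. Carrying this out for $k=1,\dots,n-1$ gives $e=f_1\cdots f_{n-1}g_{n-1}$ with $d(g_{n-1})=\omega^{\beta_n}$, so setting $f_n:=g_{n-1}\in\Lambda^{\omega^{\beta_n}}$ yields $e=f_1f_2\cdots f_n$ with each $f_k\in\Lambda^{\omega^{\beta_k}}$ and $\beta_k\geq\beta_{k+1}$.

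For uniqueness, suppose $e=f_1'\cdots f_{n'}'$ is another such decomposition, with $f_k'\in\Lambda^{\omega^{\beta_k'}}$ and $\beta_1'\geq\cdots\geq\beta_{n'}'$. Applying the functor $d$ gives $\omega^{\beta_1}+\cdots+\omega^{\beta_n}=d(e)=\omega^{\beta_1'}+\cdots+\omega^{\beta_{n'}'}$. I would then group maximal runs of equal exponents on each side, rewriting both sides in the form $\omega^{\delta_1}\cdot c_1+\cdots+\omega^{\delta_\ell}\cdot c_\ell$ with strictly decreasing $\delta_j$ and positive integers $c_j$ (using $\omega^{\alpha}\cdot\gamma=\omega^{\alpha}+\cdots+\omega^{\alpha}$); since $\omega^{\gamma}+\omega^{\delta}=\omega^{\delta}$ whenever $\gamma<\delta$, a weakly decreasing sum of powers of $\omega$ never collapses, so each side is precisely the base-$\omega$ expansion of $d(e)$. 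The uniqueness clause of \thmref{base-expansion} then forces these expansions to agree, whence $n=n'$ and $\beta_k=\beta_k'$ for all $k$. Finally, $f_1\,(f_2\cdots f_n)$ and $f_1'\,(f_2'\cdots f_{n'}')$ are two factorizations of $e$ whose first factors have the common length $\omega^{\beta_1}$, so unique factorization gives $f_1=f_1'$ and \propref{left-cancellative} gives $f_2\cdots f_n=f_2'\cdots f_{n'}'$; an induction on $n$ then produces $f_k=f_k'$ for every $k$.

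I expect the main obstacle to be the bookkeeping in the uniqueness step: one must carefully justify that the passage between a weakly decreasing sum of powers of $\omega$ and the strictly decreasing Cantor normal form is reversible, so that the uniqueness clause of \thmref{base-expansion} genuinely applies. The existence half and the final reconstruction of the factors are routine transfinite recursion together with unique factorization and left cancellativity.
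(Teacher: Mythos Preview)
Your proposal is correct and follows essentially the same approach as the paper: both read off the exponent sequence from the Cantor normal form of $d(e)$ and then use unique factorization (\defref{An-ordinal-graph}) to peel off the factors one at a time, arguing by induction on the number of terms. Your uniqueness argument is in fact more explicit than the paper's, which simply appeals to unique factorization at each step of the induction without spelling out the passage between weakly and strictly decreasing expansions.
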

\begin{proof}
For existence, let $e\in\Lambda$ be given, and write $d(e)=\sum_{k=1}^{n}\omega^{\beta_{k}}$
in Cantor normal form using \thmref{base-expansion}. We will induct
on $n$. Note that if $n=1$, $d(e)=\omega^{\beta_{1}}$. Setting
$n=1$ and $f_{1}=e$, we get the desired factorization, and this
factorization is unique. Now, suppose the lemma is true for factorizations
of length $m<n$. Then
\[
d(e)=d\left(e_{\omega^{\beta_{1}}}e^{\omega^{\beta_{1}}}\right)=\omega^{\beta_{1}}+\sum_{k=2}^{n}\omega^{\beta_{k}}
\]
By the induction hypothesis, $e^{\omega^{\beta_{1}}}$ can be uniquely
factored as $f_{2}\ldots f_{n}$ where $d\left(f_{k}\right)=\omega^{\beta_{k}}$
for $n\geq k\geq2$. Setting $f_{1}=e_{\omega^{\beta_{1}}}$, we get
a factorization $e=f_{1}f_{2}\ldots f_{n}$, and by \defref{An-ordinal-graph},
this factorization is unique.
\end{proof}
\begin{prop}
\label{prop:gen-relations2}$\mathcal{O}\left(\Lambda\right)$ is
the universal $C^{*}$-algebra generated by partial isometries 
\[
\left\{ S_{v}:v\in\Lambda_{0}\right\} \cup\bigcup_{\alpha\in\mathrm{Ord}}\left\{ S_{e}:e\in\Lambda^{\omega^{\alpha}}\right\} 
\]
 satisfying the following relations:
\begin{enumerate}
\item $S_{e}^{*}S_{e}=S_{s(e)}$
\item $S_{e}S_{f}=S_{ef}$ if $s\left(e\right)=r\left(f\right)$ and $d\left(e\right)<d\left(f\right)$
\item $S_{e}^{*}S_{f}=0$ if $e\Lambda\cap f\Lambda=\emptyset$
\item $S_{v}=\sum_{e\in v\Lambda^{\omega^{\alpha}}}S_{e}S_{e}^{*}$ if $v$
is $\alpha$-regular
\end{enumerate}
\end{prop}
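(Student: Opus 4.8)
The plan is to show the universal $\mathrm{C}^*$-algebra $\mathcal B$ of the statement (which exists: the generators are partial isometries and each instance of relation 4 is a finite sum, since $\alpha$-regular vertices are $\alpha$ row-finite) is isomorphic to $\mathcal O(\Lambda)$ via mutually inverse $*$-homomorphisms, comparing against the presentation of $\mathcal O(\Lambda)$ in \corref{gen-relations}. One direction is immediate. Writing $\{V_e:e\in\Lambda\}$ for the generators of $\mathcal O(\Lambda)$ from \corref{gen-relations}, the subfamily indexed by $\Lambda_0\cup\bigcup_\alpha\Lambda^{\omega^\alpha}$ consists of partial isometries by \lemref{basic-calcs}(a) and satisfies relations 1--4 of this proposition: relations 1 and 4 are verbatim relations 1 and 4 of \corref{gen-relations}, relation 2 is the case $d(e)<d(f)$ of relation 2 there, and relation 3 is \lemref{basic-calcs}(d). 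Hence the universal property of $\mathcal B$ gives a $*$-homomorphism $\Psi\colon\mathcal B\to\mathcal O(\Lambda)$ with $\Psi(S_e)=V_e$.

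For the reverse map I would define, for each $e\in\Lambda$, an element $T_e\in\mathcal B$: put $T_v=S_v$ for $v\in\Lambda_0$, and for $d(e)>0$ write $e=f_1f_2\cdots f_n$ by the \emph{unique} factorization of \lemref{3-20} and set $T_e=S_{f_1}S_{f_2}\cdots S_{f_n}$ (uniqueness is what makes this well defined). The goal is to verify $\{T_e:e\in\Lambda\}$ satisfies relations 1--4 of \corref{gen-relations}, so that its universal property produces $\Phi\colon\mathcal O(\Lambda)\to\mathcal B$ with $\Phi(V_e)=T_e$. I would first record the elementary consequences of the relations on $\mathcal B$: each $S_v$ is a projection and each $S_e$ a partial isometry (relation 1), $S_uS_w=0$ for distinct $u,w\in\Lambda_0$ (relation 3), and $S_{r(e)}S_e=S_e$ for $d(e)>0$ (relation 2). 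The key point is the ``seam identity''
\[
S_eT_f=T_{ef}\qquad(e\text{ a generator, }s(e)=r(f)),
\]
which I would prove by comparing $d(e)=\omega^\alpha$ with the leading exponent $\omega^{\gamma_1}$ of $f=h_1\cdots h_n$: if $\alpha\geq\gamma_1$ then $e,h_1,\dots,h_n$ is itself a \lemref{3-20} factorization of $ef$, so $T_{ef}=S_eS_{h_1}\cdots S_{h_n}=S_eT_f$; and if $\alpha<\gamma_1$ then $\omega^\alpha+\omega^{\gamma_1}=\omega^{\gamma_1}$, so $eh_1\in\Lambda^{\omega^{\gamma_1}}$ is a generator and relation 2 gives $S_eS_{h_1}=S_{eh_1}$, whence $eh_1,h_2,\dots,h_n$ is a \lemref{3-20} factorization of $ef$ and again $T_{ef}=S_eT_f$ (the cases where $e$ or $f$ is a vertex reduce to the absorption fact).

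Iterating the seam identity along the factors of $e$ yields $T_eT_f=T_{ef}$ for all composable $e,f$; when $s(e)\neq r(f)$ I would expose $S_{s(e)}S_{r(f)}=0$ at the junction of $T_e$ and $T_f$ to conclude $T_eT_f=0$. This is relation 2 of \corref{gen-relations}. Relation 1 for $\{T_e\}$ comes from telescoping $T_e^*T_e=S_{f_n}^*\cdots S_{f_1}^*S_{f_1}\cdots S_{f_n}$ inward via relation 1 and the absorption fact to $S_{f_n}^*S_{f_n}=S_{s(e)}=T_{s(e)}$. Relation 3: if $f\in e\Lambda$, say $f=eg$, then $T_f=T_eT_g$ by the established relation 2, so $T_eT_e^*T_fT_f^*=T_eT_e^*T_eT_gT_g^*T_e^*=T_eT_gT_g^*T_e^*=T_fT_f^*$. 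Relation 4 is immediate: for $e\in v\Lambda^{\omega^\alpha}$ the \lemref{3-20} factorization is trivial, so $T_e=S_e$ and relation 4 for $\{T_e\}$ is precisely relation 4 of $\mathcal B$. This gives $\Phi$.

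It remains to see $\Phi$ and $\Psi$ are mutually inverse, which it suffices to check on generators. For $e=f_1\cdots f_n$ as in \lemref{3-20}, $\Psi\Phi(V_e)=V_{f_1}\cdots V_{f_n}=V_{f_1\cdots f_n}=V_e$ by relation 2 in $\mathcal O(\Lambda)$, and $\Psi\Phi(V_v)=V_v$; conversely the \lemref{3-20} factorization of a generator is itself, so $T_e=S_e$ and $\Phi\Psi(S_e)=T_e=S_e$. Hence $\mathcal O(\Lambda)\cong\mathcal B$. I expect the seam identity $S_eT_f=T_{ef}$ to be the main obstacle: ordinal addition is neither commutative nor right cancellative, so a product of two paths can absorb the tail of one Cantor normal form into the head of the other, and relation 2 of $\mathcal B$ — available only under a \emph{strict} inequality of lengths — is exactly what licenses that absorption, so the case analysis there and the careful tracking of composability is where the work lies.
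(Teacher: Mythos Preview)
Your proposal is correct and follows essentially the same approach as the paper: construct mutually inverse $*$-homomorphisms by using the Cantor normal form factorization of \lemref{3-20} to define $T_e$ (the paper's $U_e$) and then verify the relations of \corref{gen-relations}. The differences are cosmetic: you prove a one-generator seam identity $S_eT_f=T_{ef}$ and iterate, whereas the paper absorbs several tail factors of $g$ into the head of $e$ in a single step; and for relation~3 of \corref{gen-relations} you give the short argument via the already-established multiplicativity $T_f=T_eT_g$, while the paper instead checks the orthogonality $U_e^*U_g=0$ when $e\Lambda\cap g\Lambda=\emptyset$ by locating the first place the factorizations differ.
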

\begin{rem}
Relation 2 makes sense because if $\beta<\alpha$ with $e\in\Lambda^{\omega^{\beta}}$,
$f\in\Lambda^{\omega^{\alpha}}$, then $d\left(ef\right)=d\left(e\right)+d\left(f\right)=\omega^{\beta}+\omega^{\alpha}=\omega^{\alpha}$.
\end{rem}
\begin{proof}
Let $\mathcal{A}$ be the $\mathrm{C}^{*}$-algebra which is universal
for the generators and relations above. By \corref{gen-relations},
there exists a {*}-homomorphism $\eta:\mathcal{A}\rightarrow\mathcal{O}\left(\Lambda\right)$
with $\eta\left(S_{e}\right)=T_{e}$ for $e\in\Lambda_{0}\cup\bigcup_{\alpha\in\mathrm{Ord}}\Lambda^{\omega^{\alpha}}$.
We will construct an inverse $\mu:\mathcal{O}\left(\Lambda\right)\rightarrow\mathcal{A}$,
which will finish the proof. We wish to do so by identifying a family
$\mathcal{U}=\left\{ U_{e}:e\in\Lambda\right\} $ of operators which
satisfy the relations of \corref{gen-relations}. $\mu$ is an inverse
for $\eta$ if and only if $\mu\left(T_{e}\right)=S_{e}$ for $e\in\Lambda_{0}\cup\bigcup_{\alpha\in\mathrm{Ord}}\Lambda^{\omega^{\alpha}}$.
Thus for such $e$, we must define $U_{e}=S_{e}$. By \lemref{3-20}
and relation 2 in \corref{gen-relations}, this determines $\mathcal{U}$,
for if $e=f_{1}f_{2}\ldots f_{n}$ with $f_{k}\in\Lambda^{\omega^{\beta_{k}}}$,
$\beta_{k}\geq\beta_{k+1}$, then
\[
U_{e}=U_{f_{1}}U_{f_{2}}\ldots U_{f_{n}}=S_{f_{1}}S_{f_{2}}\ldots S_{f_{n}}
\]
Suppose $g=h_{1}h_{2}\ldots h_{m}$ with $h_{k}\in\Lambda^{\omega^{\alpha_{k}}}$,
$\alpha_{k}\geq\alpha_{k+1}$, and $s(g)=r(e)$. Choose the smallest
$r$ with $1\leq r\leq m$ such that $\alpha_{r}<\beta_{1}$. With
$p=h_{r}h_{r+1}\ldots h_{m}f_{1}$, $d(p)=\omega^{\beta_{1}}$ and
\begin{align*}
U_{g}U_{e} & =U_{h_{1}}\ldots U_{h_{r}}U_{h_{r+1}}\ldots U_{h_{m}}U_{f_{1}}\ldots U_{f_{n}}\\
 & =U_{h_{1}}\ldots U_{h_{r-1}}U_{p}U_{f_{2}}\ldots U_{f_{n}}
\end{align*}
Then $\alpha_{1}\geq\alpha_{2}\geq\ldots\geq\alpha_{r-1}\geq\beta_{1}\geq\ldots\geq\beta_{n}$,
so by uniqueness in \lemref{3-20}, $U_{ge}=U_{g}U_{e}$. It remains
to see that $\mathcal{U}$ satisfies the other relations of \corref{gen-relations}.
We may verify relation 1 with the following calculation:
\begin{align*}
U_{e}^{*}U_{e} & =U_{f_{n}}^{*}U_{f_{n-1}}^{*}\ldots U_{f_{1}}^{*}U_{f_{1}}\ldots U_{f_{n}}\\
 & =U_{f_{n}}^{*}\ldots U_{f_{2}}^{*}U_{s\left(f_{1}\right)}U_{f_{2}}\ldots U_{f_{n}}\\
 & =U_{f_{n}}^{*}\ldots U_{f_{2}}^{*}U_{f_{2}}\ldots U_{f_{n}}\\
 & \ldots\\
 & =U_{f_{n}}^{*}U_{f_{n}}=U_{s\left(f_{n}\right)}=U_{s(e)}
\end{align*}
For relation 3, suppose without loss of generality $d(e)\leq d(g)$
and $e\Lambda\cap g\Lambda=\emptyset$. If for $1\leq r\leq n$ and
$\beta=\sum_{k=1}^{r}\omega^{\beta_{k}}$, $e_{\beta}=g_{\beta}$,
then 
\begin{align*}
U_{e}^{*}U_{g} & =U_{e^{\beta}}^{*}U_{e_{\beta}}^{*}U_{g_{\beta}}U_{g^{\beta}}\\
 & =U_{e^{\beta}}^{*}U_{s\left(e_{\beta}\right)}U_{g^{\beta}}=U_{e^{\beta}}^{*}U_{g^{\beta}}
\end{align*}
Thus it suffices to show $U_{e^{\beta}}^{*}U_{g^{\beta}}=0$. By applying
relation 1, we may also assume without loss of generality that $e_{\omega^{\beta_{1}}}\not=g_{\omega^{\beta_{1}}}$.
Then \ref{lem:equal-length} implies $e_{\omega^{\beta_{1}}}\Lambda\cap g_{\omega^{\beta_{1}}}\Lambda=\emptyset$,
so by relation 3 of $\mathcal{A}$,
\[
U_{e}^{*}U_{g}=U_{e^{\omega^{\beta_{1}}}}^{*}U_{e_{\omega^{\beta_{1}}}}^{*}U_{g_{\omega^{\beta_{1}}}}U_{g^{\omega^{\beta_{1}}}}=0
\]
Finally, relation 4 of \corref{gen-relations} is identical to relation
4 for $\mathcal{A}$.
\end{proof}

\section{Infinite Paths}

For an ordinal graph $\Lambda$, we wish to define the infinite path
space on which we represent $\mathcal{O}\left(\Lambda\right)$. This
will prove that the generators of $\mathcal{O}\left(\Lambda\right)$
are non-zero. In particular, \thmref{Cuntz-Krieger-Uniqueness-Theorem}
is not useful if any of the vertex projections are zero, but \thmref{reg-rep}
proves this never happens.
\begin{defn}
For an ordinal graph $\Lambda$, define $\Lambda^{*}=\left\{ f\in\prod_{\beta<\alpha}\Lambda^{\beta}:\alpha\in\mathrm{Ord},f\left(\beta\right)\in f\left(\gamma\right)\Lambda\text{ if }\gamma\leq\beta\right\} $.
For $f\in\Lambda^{*}\cap\prod_{\beta<\alpha}\Lambda^{\beta}$, we
define $L\left(f\right)=\alpha$ and $r\left(f\right)=f\left(0\right)$.
\end{defn}
If $\Lambda$ is a directed graph, then $\Lambda^{*}$ is the set
of finite and infinite paths. For $e\in\Lambda$, we may define $f\in\Lambda^{*}\cap\prod_{\beta<d(e)+1}\Lambda^{\beta}$
by setting $f\left(\beta\right)=e_{\beta}$. Indeed, if $\gamma\leq\beta$,
$f\left(\beta\right)=e_{\beta}=\left(e_{\beta}\right)_{\gamma}\left(e_{\beta}\right)^{\gamma}=e_{\gamma}\left(e_{\beta}\right)^{\gamma}\in e_{\gamma}\Lambda=f\left(\gamma\right)\Lambda$.
In this way, we regard $\Lambda$ as a subset of $\Lambda^{*}$. Then
for $e\in\Lambda$, $L\left(e\right)=d(e)+1$.

$\Lambda^{*}$ is partially ordered, where $e\leq f$ if $L(e)\leq L(f)$
and for $\beta<L(e)$, $e\left(\beta\right)=f\left(\beta\right)$.
For a directed graph, the maximal elements of $\Lambda^{*}$ are the
infinite paths and the finite paths $e$ for which $s(e)$ is a source.
\begin{defn}[{cf. \cite[Definition 2.1]{KGRAPH}}]
$\Lambda^{\infty}$ is the set of maximal elements of $\Lambda^{*}$.
\end{defn}
\begin{prop}
\label{prop:extend-to-inf}For every $e\in\Lambda$ there exists $f\in\Lambda^{\infty}$
such that $L\left(f\right)>d\left(e\right)$ and $f\left(d\left(e\right)\right)=e$.
\end{prop}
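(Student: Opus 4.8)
The plan is to apply Zorn's lemma inside $\Lambda^{*}$. First I would use the embedding $\Lambda\hookrightarrow\Lambda^{*}$ recorded just before the definition of $\Lambda^{\infty}$: viewing $e$ as the element $\beta\mapsto e_{\beta}$ of $\Lambda^{*}$ with domain $[0,d(e)+1)$, we have $L(e)=d(e)+1>d(e)$ and $e(d(e))=e_{d(e)}=e$. So it suffices to produce an element $f$ of $\Lambda^{*}$ that is maximal and satisfies $f\geq e$ in the order on $\Lambda^{*}$: then $L(f)\geq L(e)=d(e)+1>d(e)$, and since $d(e)<L(e)$ we get $f(d(e))=e(d(e))=e$. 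Set $P=\{g\in\Lambda^{*}:e\leq g\}$; this is nonempty, and any element of $P$ that is maximal in $P$ is maximal in $\Lambda^{*}$, since any element of $\Lambda^{*}$ lying above it lies above $e$ and hence already belongs to $P$.

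Next I would check that $P$ is a set, so that Zorn's lemma applies. If $f\in\Lambda^{*}$ has domain $[0,\alpha)$, then for distinct $\beta,\gamma<\alpha$ the entries $f(\beta)\in\Lambda^{\beta}$ and $f(\gamma)\in\Lambda^{\gamma}$ have distinct $d$-values, so $f$ is an injection of $\alpha$ into the set $\Lambda$; hence $|\alpha|\leq|\Lambda|$ and $L(f)<|\Lambda|^{+}$. Thus $\Lambda^{*}$, and a fortiori $P$, is a set.

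The substantive step is that every nonempty chain $\mathcal{C}\subseteq P$ has an upper bound in $P$. Since $\mathcal{C}$ is a chain, any two of its members agree on the intersection of their domains, so the union $h=\bigcup_{g\in\mathcal{C}}g$ is a well-defined function with domain $[0,\alpha^{*})$, where $\alpha^{*}=\sup_{g\in\mathcal{C}}L(g)$; this treats uniformly the cases where the supremum is attained and where it is not. To see $h\in\Lambda^{*}$, given $\gamma\leq\beta<\alpha^{*}$ I would choose a single $g\in\mathcal{C}$ with $L(g)>\beta$; then $h(\beta)=g(\beta)\in\Lambda^{\beta}$ and $h(\beta)=g(\beta)\in g(\gamma)\Lambda=h(\gamma)\Lambda$. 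By construction $h\geq g$ for each $g\in\mathcal{C}$, and $h\geq e$ because every element of $\mathcal{C}$ already is; so $h\in P$ is an upper bound for $\mathcal{C}$. Since $P\neq\emptyset$, Zorn's lemma now yields a maximal $f\in P$, and the first paragraph finishes the proof.

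I expect the only real difficulty to be the set-theoretic bookkeeping: confirming that $\Lambda^{*}$ is a set rather than a proper class (which rests on $\Lambda$ being a set), and arranging the chain argument so that limit and successor values of $\sup_{g\in\mathcal{C}}L(g)$ are handled at once — the union-of-a-chain construction does exactly this. Everything else is a direct unwinding of the definitions of $\Lambda^{*}$ and of its partial order.
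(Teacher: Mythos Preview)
Your proof is correct and follows essentially the same approach as the paper: apply Zorn's lemma to the set of elements of $\Lambda^{*}$ extending $e$, constructing upper bounds of chains as unions. Your set $P=\{g\in\Lambda^{*}:e\leq g\}$ coincides with the paper's $\Omega=\{g\in\Lambda^{*}:L(g)>d(e),\ g(d(e))=e\}$ (unique factorization forces $g(\beta)=e_{\beta}$ for $\beta\leq d(e)$ once $g(d(e))=e$), and you add a useful verification that $\Lambda^{*}$ is a set, which the paper leaves implicit.
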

\begin{proof}
Let $e\in\Lambda$ be arbitrary. Define $\Omega=\left\{ g\in\Lambda^{*}:L\left(g\right)>d\left(e\right)\text{ and }g\left(d\left(e\right)\right)=e\right\} $.
If $f$ is a maximal element of $\Omega$ and $g\in\Lambda^{*}$ with
$g\geq f$, then $g\in\Omega$. Thus every maximal element of $\Omega$
belongs to $\Lambda^{\infty}$. Hence it suffices to show $\Omega$
has a maximal element. By Zorn's lemma, we only need to show that
every totally ordered subset of $\Omega$ has an upper bound. Let
$C\subseteq\Omega$ be a totally ordered subset of $\Omega$, and
define $\alpha=\sup_{h\in C}L(h)$. We also define $f\in\Lambda^{*}\cap\prod_{\beta<\alpha}\Lambda^{\beta}$
where for $\beta<\alpha$, $h\in C$ with $L\left(h\right)>\beta$,
$f\left(\beta\right)=h\left(\beta\right)$. By construction, for every
$\beta<\alpha$ there exists $h\in C$ such that $L\left(h\right)>\beta$.
Moreover, $h\left(\beta\right)$ is independent of the choice of $h$
since $C$ is totally ordered, so $f$ is well-defined. Indeed, we
also constructed $f$ to be an upper bound for $C$.
\end{proof}
For $e\in\Lambda$ and $f\in\Lambda^{*}$ with $s(e)=r(f)$, we wish
to define a composition $ef\in\Lambda^{*}$ which is compatible with
composition in $\Lambda$. We achieve this with the following definition
and result.
\begin{defn}
If $e\in\Lambda$ and $f\in\Lambda^{*}$ with $s(e)=r(f)$, define
$ef\in\prod_{\beta<d(e)+L(f)}\Lambda^{\beta}$ by
\[
\left(ef\right)\left(\beta\right)=\begin{cases}
e_{\beta} & \beta\leq d\left(e\right)\\
ef\left(-d\left(e\right)+\beta\right) & \beta>d\left(e\right)
\end{cases}
\]
\end{defn}
By construction, $ef\in\Lambda^{*}$ if $e\in\Lambda$ and $f\in\Lambda^{*}$
with $s\left(e\right)=r\left(f\right)$. Note also that if $f,g\in\Lambda^{*}$,
$e\in\Lambda$, $s(e)=r(f)=r(g)$, and $f\leq g$, then $ef\leq eg$.
Moreover, for $f\in\Lambda^{*}$ and $\beta<L\left(f\right)$, there
is $g\in\Lambda^{*}$ with $f=f\left(\beta\right)g$, where $g\in\prod_{\gamma<-\beta+L\left(f\right)}\Lambda^{\gamma}$
is defined by
\[
g\left(\gamma\right)=f\left(\beta+\gamma\right)
\]

\begin{lem}
If $f\in\Lambda^{*}$ and $e\in\Lambda r(f)$, then $ef\in\Lambda^{\infty}$
if and only if $f\in\Lambda^{\infty}$.
\end{lem}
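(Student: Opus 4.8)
The plan is to prove both implications by contraposition, using that an element of $\Lambda^*$ is non-maximal exactly when it is strictly dominated by another element of $\Lambda^*$. Since $r(f)$ must be defined for the hypothesis $e\in\Lambda r(f)$ to make sense, we have $L(f)\geq 1$, and hence $d(e)<d(e)+L(f)=L(ef)$; I would record this at the outset.

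First I would show that if $f\notin\Lambda^\infty$ then $ef\notin\Lambda^\infty$. Pick $g\in\Lambda^*$ with $f<g$. Then $0<L(f)$ forces $g(0)=f(0)=r(f)=s(e)$, so $eg$ is defined, and the monotonicity of composition recorded just before the lemma gives $ef\leq eg$. Since left ordinal addition is strictly increasing and $L(g)>L(f)$, we get $L(eg)=d(e)+L(g)>d(e)+L(f)=L(ef)$, so $ef\neq eg$; therefore $ef<eg$ and $ef$ is not maximal.

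For the reverse implication, suppose $ef\notin\Lambda^\infty$ and choose $h\in\Lambda^*$ with $ef<h$; then $h$ restricts to $ef$ on $[0,L(ef))$ and $L(ef)<L(h)$. In particular $h(d(e))=(ef)(d(e))=e_{d(e)}=e$, so the decomposition recorded before the lemma (applied with $\beta=d(e)$, legitimate since $d(e)<L(ef)<L(h)$) writes $h=eg$ for a unique $g\in\Lambda^*$ with $L(g)=-d(e)+L(h)$ and $h(d(e)+\gamma)=e\cdot g(\gamma)$ for all $\gamma<L(g)$. I claim $f<g$. For the lengths, $L(h)>d(e)+L(f)$ together with monotonicity of left ordinal subtraction gives $L(g)=-d(e)+L(h)>-d(e)+(d(e)+L(f))=L(f)$. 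For the values, fix $\gamma<L(f)$; then $d(e)+\gamma<d(e)+L(f)=L(ef)$, so from the definition of $ef$ we get $h(d(e)+\gamma)=(ef)(d(e)+\gamma)=e\cdot f(\gamma)$, while also $h(d(e)+\gamma)=e\cdot g(\gamma)$; left cancellation in $\Lambda$ (\propref{left-cancellative}) then forces $g(\gamma)=f(\gamma)$. Hence $f<g$ and $f$ is not maximal.

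The ordinal-arithmetic ingredients — strict monotonicity of $\gamma\mapsto d(e)+\gamma$ and monotonicity of left subtraction — follow at once from left cancellativity of ordinal addition together with the definition of the order on $\mathrm{Ord}$. I expect the only delicate point to be the reverse implication: one must peel off precisely the length-$d(e)$ block $e$ from the dominating path $h$, and then match the two expressions for $h(d(e)+\gamma)$ so as to apply left cancellation and conclude that the tail of $h$ strictly dominates $f$.
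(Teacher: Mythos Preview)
Your proof is correct and follows essentially the same approach as the paper's: both directions amount to showing that the operation $f\mapsto ef$ preserves and reflects strict domination in $\Lambda^{*}$, and you and the paper carry out the same decomposition $h=eg$ via the factorization recorded just before the lemma. Your contrapositive framing is a cosmetic difference, and your treatment is in fact slightly more careful than the paper's on two points: you make explicit that $L(f)\geq 1$ (needed so that $d(e)<L(ef)$ and hence $h(d(e))$ is defined), and you state the decomposition of $h$ in the correct form $h(d(e)+\gamma)=e\cdot g(\gamma)$ rather than the paper's $g(\gamma)=h(d(e)+\gamma)$, which as written has the wrong degree.
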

\begin{proof}
Suppose $g\in\Lambda^{*}$, $f\in\Lambda^{\infty}$, and $ef\leq g$.
Then $L(g)\geq L\left(ef\right)=d(e)+L\left(f\right)\geq d(e)$. Thus
$g\left(d\left(e\right)\right)=\left(ef\right)\left(d\left(e\right)\right)=e$,
and $g=eh$ where $L\left(h\right)=-d\left(e\right)+L\left(g\right)$
and $h\left(\beta\right)=g\left(d\left(e\right)+\beta\right)$. Hence
$ef\leq eh$, and $f\leq h$. Since $f\in\Lambda^{\infty}$, $h=f$
and $g=ef$.

For the other direction, suppose $ef\in\Lambda^{\infty}$ and $g\in\Lambda^{*}$
such that $f\leq g$. Then $r(f)=r(g)=s(e)$, and $ef\leq eg$. Since
$ef\in\Lambda^{\infty}$, $ef=eg$, and hence $f=g$.
\end{proof}
\begin{lem}
\label{lem:path-star-mult-assoc}If $e,f\in\Lambda$ with $s(e)=r(f)$
and $g\in\Lambda^{*}$ with $s(f)=r(g)$, $\left(ef\right)g=e\left(fg\right)$.
\end{lem}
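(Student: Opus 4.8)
\section*{Proof proposal}

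The plan is to show that $(ef)g$ and $e(fg)$ are equal as elements of $\Lambda^{*}$, which means checking first that they have the same length $L$ and then that they agree at every coordinate. For the length, both equal $d(e)+d(f)+L(g)$: since $d$ is a functor we have $d(ef)=d(e)+d(f)$, so $L\bigl((ef)g\bigr)=d(ef)+L(g)=d(e)+d(f)+L(g)$, while $L(fg)=d(f)+L(g)$ gives $L\bigl(e(fg)\bigr)=d(e)+L(fg)=d(e)+d(f)+L(g)$, using associativity of ordinal addition.

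For the coordinatewise comparison I would fix $\beta<d(e)+d(f)+L(g)$ and split into three regimes according to where $\beta$ sits relative to $d(e)$ and $d(e)+d(f)$. When $\beta\leq d(e)$, the definition of $(ef)g$ and the identity $(ef)_{\beta}=e_{\beta}$ from the factorization calculations recorded earlier give $\bigl((ef)g\bigr)(\beta)=e_{\beta}$, while the definition of $e(fg)$ gives $\bigl(e(fg)\bigr)(\beta)=e_{\beta}$ as well. When $d(e)\leq\beta\leq d(e)+d(f)$, the identity $(ef)_{\beta}=ef_{-d(e)+\beta}$ gives $\bigl((ef)g\bigr)(\beta)=ef_{-d(e)+\beta}$; on the other side, $-d(e)+\beta\leq d(f)$, so the definition of $fg$ yields $(fg)(-d(e)+\beta)=f_{-d(e)+\beta}$ and hence $\bigl(e(fg)\bigr)(\beta)=ef_{-d(e)+\beta}$ too. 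When $\beta>d(e)+d(f)$, both sides reduce to an expression of the form $e\,f\,g(\delta)$ for a suitable tail index $\delta$, and the equality follows from associativity of composition in the category $\Lambda$, i.e. $(ef)\,g(\delta)=e\,\bigl(f\,g(\delta)\bigr)$.

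The only place requiring genuine care is the bookkeeping of one-sided subtraction in the last regime: one must check that $g$ is evaluated at the same index on both sides, namely that $-d(f)+\bigl(-d(e)+\beta\bigr)=-\bigl(d(e)+d(f)\bigr)+\beta$. This follows from left cancellativity of ordinal addition, since adding $d(e)+d(f)$ to either candidate returns $\beta$ (using associativity of $+$ and the defining property of one-sided subtraction twice on the left-hand value), so the two values coincide. One also checks without difficulty that $\beta>d(e)+d(f)$ forces $-d(e)+\beta>d(f)$, which is exactly what places us in the second branch of the definition of $fg$. Beyond this arithmetic, the argument is a routine unwinding of definitions, so I do not expect any substantial obstacle past careful case management.
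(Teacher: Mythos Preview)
Your proposal is correct and follows essentially the same route as the paper: verify the lengths agree, then compare coordinatewise in the three regimes $\beta\leq d(e)$, $d(e)\leq\beta\leq d(e)+d(f)$, and $\beta>d(e)+d(f)$, invoking the factorization identities $(ef)_{\beta}=e_{\beta}$ and $(ef)_{\beta}=ef_{-d(e)+\beta}$ in the first two and associativity of composition in $\Lambda$ in the third. If anything, your treatment of the one-sided subtraction identity $-d(f)+(-d(e)+\beta)=-(d(e)+d(f))+\beta$ is more explicit than the paper's, which simply writes $-d(ef)+\beta=-d(f)-d(e)+\beta$ without comment.
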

\begin{proof}
First we verify $L\left(\left(ef\right)g\right)=d\left(ef\right)+L\left(g\right)=d\left(e\right)+d\left(f\right)+L\left(g\right)=d\left(e\right)+L\left(fg\right)=L\left(e\left(fg\right)\right)$.
Now, let $\beta<L\left(\left(ef\right)g\right)$ be given. If $\beta\leq d\left(e\right)$,
\[
\left(\left(ef\right)g\right)\left(\beta\right)=\left(ef\right)_{\beta}=e_{\beta}=\left(e\left(fg\right)\right)\left(\beta\right)
\]
If $d\left(e\right)\leq\beta\leq d\left(ef\right)=d\left(e\right)+d\left(f\right)$,
$-d\left(e\right)+\beta\leq d\left(f\right)$ and
\[
\left(\left(ef\right)g\right)\left(\beta\right)=\left(ef\right)_{\beta}=ef_{-d\left(e\right)+\beta}=e\left(fg\right)\left(-d\left(e\right)+\beta\right)=\left(e\left(fg\right)\right)\left(\beta\right)
\]
Finally, if $d\left(ef\right)\leq\beta$, then
\begin{align*}
\left(\left(ef\right)g\right)\left(\beta\right) & =\left(ef\right)g\left(-d\left(ef\right)+\beta\right)=\left(ef\right)g\left(-d(f)-d(e)+\beta\right)\\
 & =e\left(fg\right)\left(-d\left(e\right)+\beta\right)=\left(e\left(fg\right)\right)\left(\beta\right)
\end{align*}
\end{proof}
\begin{lem}
\label{lem:reg-inf-path}If $v\in\Lambda_{0}$ is $\alpha$-regular
and $f\in\Lambda^{\infty}$ with $r(f)=v$, then $L\left(f\right)>\omega^{\alpha}$.
\end{lem}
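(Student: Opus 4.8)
We sketch the plan. The approach is to prove the contrapositive: supposing $f\in\Lambda^{*}$ is maximal with $r(f)=v$ and $\mu:=L(f)\le\omega^{\alpha}$, I would produce an element of $\Lambda^{*}$ strictly above $f$, contradicting maximality. By coherence of $f$ every $f(\beta)$ lies in $v\Lambda$, and it suffices to find a single path $e\in\Lambda$ with $d(e)=\mu$ and $e_{\beta}=f(\beta)$ for all $\beta<\mu$: then $g$ defined by $g(\beta)=f(\beta)$ for $\beta<\mu$ and $g(\mu)=e$ is a well-defined element of $\Lambda^{*}$ (the only new instance of coherence is $e\in f(\beta)\Lambda$ for $\beta<\mu$, which holds since $e_{\beta}=f(\beta)$) and $g>f$.

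First I would dispose of the case that $\mu$ is a successor ordinal $\gamma+1$. Since $\omega^{\alpha}$ is either $1$ or a limit ordinal, $\gamma+1\le\omega^{\alpha}$ forces $\gamma<\omega^{\alpha}$, so $f(\gamma)\in v\Lambda_{\alpha}$; $\alpha$ source-regularity then gives $h\in s(f(\gamma))\Lambda$ with $d(h)=\omega^{\alpha}\ge1$. Letting $h_{1}$ be the length-$1$ initial segment of $h$, the path $e=f(\gamma)h_{1}$ has $d(e)=\gamma+1=\mu$ and, by the factorization identities together with coherence of $f$, $e_{\beta}=f(\gamma)_{\beta}=f(\beta)$ for all $\beta\le\gamma$. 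This is the required $e$.

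The remaining, and harder, case is $\mu$ a limit ordinal; this is where $\alpha$-regularity must be used in full, since a coherent family need not admit a ``limit'' in a general ordinal graph. The first step would be to note that $P:=\{p\in v\Lambda:d(p)=\mu\}$ is finite: if $\mu=\omega^{\alpha}$ this is $\alpha$ row-finiteness, while if $\mu<\omega^{\alpha}$ one uses $\alpha$ source-regularity to extend each $p\in P$ to a path $ph$ with $d(ph)=\mu+\omega^{\alpha}=\omega^{\alpha}$ (using $\sigma+\omega^{\alpha}=\omega^{\alpha}$ for $\sigma<\omega^{\alpha}$); since $(ph)_{\mu}=p$, distinct elements of $P$ yield distinct length-$\omega^{\alpha}$ paths, so $P$ is finite by $\alpha$ row-finiteness. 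Now assume for contradiction that no $p\in P$ has $p_{\beta}=f(\beta)$ for all $\beta<\mu$; for each $p\in P$ fix $\beta_{p}<\mu$ with $p_{\beta_{p}}\ne f(\beta_{p})$ and put $\nu=\max_{p\in P}\beta_{p}$, a finite maximum, so $\nu<\mu$. A short computation with unique factorization shows that for every $p\in P$ and every $\beta$ with $\nu<\beta<\mu$ one still has $p_{\beta}\ne f(\beta)$. Choosing $\beta_{0}=\nu+1<\mu$ and applying $\alpha$ source-regularity to $f(\beta_{0})\in v\Lambda_{\alpha}$ produces $h$ with $d(h)=\omega^{\alpha}$; then $p_{0}:=(f(\beta_{0})h)_{\mu}$ belongs to $P$ and satisfies $(p_{0})_{\beta_{0}}=(f(\beta_{0})h)_{\beta_{0}}=f(\beta_{0})$, contradicting the previous sentence. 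Hence some $p\in P$ realizes $f$, and $e=p$ finishes the construction.

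I expect the factorization bookkeeping (identities such as $((f(\beta_{0})h)_{\mu})_{\beta_{0}}=f(\beta_{0})$, and the implication $p_{\beta}=f(\beta)\Rightarrow p_{\beta_{p}}=f(\beta_{p})$ for $\beta_{p}\le\beta$) to be routine. The one genuinely delicate point will be recognizing that row-finiteness is exactly what is needed: it forces the finitely many competing length-$\mu$ paths out of $v$ to all diverge from $f$ below a common level $\nu<\mu$, after which source-regularity can be used to build a length-$\mu$ path agreeing with $f$ past $\nu$, which is the contradiction.
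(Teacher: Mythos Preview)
Your argument is correct and rests on the same two ingredients as the paper's proof---row-finiteness to bound a finite family of candidate extensions, and source-regularity to produce a path that agrees with $f$ beyond the level at which all bad candidates have diverged---but the packaging differs. The paper avoids your successor/limit case split entirely: it works throughout with paths of length $\omega^{\alpha}$ rather than your set $P$ of paths of length $\mu$, defining the $\mathbb{N}$-valued decreasing function $g(\beta)=\left|f(\beta)\Lambda^{\omega^{\alpha}}\right|$, noting it must stabilize at some $\delta<L(f)$, and then picking any $e\in f(\delta)\Lambda^{\omega^{\alpha}}$ and arguing that $e_{\beta}=f(\beta)$ for all $\beta$ (otherwise $g$ would strictly decrease past $\delta$). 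This yields $f\leq e$ directly, with $L(e)=\omega^{\alpha}+1$. Your version makes the finiteness step more explicit (you have to argue separately that $\left|P\right|<\infty$ when $\mu<\omega^{\alpha}$, via extension to length $\omega^{\alpha}$), whereas the paper gets it for free by always working at length $\omega^{\alpha}$; conversely, your finite-max argument is perhaps more transparent than the ``eventually constant'' trick. One small wrinkle worth tightening: if $P=\emptyset$ the maximum $\nu$ is undefined, but your Step~4 already shows $P\neq\emptyset$, so this is easily patched by constructing one element of $P$ first.
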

\begin{proof}
Since $v$ is $\alpha$-regular, $\left|v\Lambda^{\omega^{\alpha}}\right|<\infty$.
Thus if $L\left(f\right)\leq\omega^{\alpha}$, we may define a function
$g:\left[0,L\left(f\right)\right)\rightarrow\mathbb{N}$ where $g\left(\beta\right)=\left|f\left(\beta\right)\Lambda^{\omega^{\alpha}}\right|$.
Note that $g$ is decreasing because if $\gamma\leq\beta$, 
\[
f\left(\beta\right)\Lambda^{\omega^{\alpha}}=f\left(\gamma\right)f\left(\beta\right)^{\gamma}\Lambda^{\omega^{\alpha}}\subseteq f\left(\gamma\right)\Lambda^{\omega^{\alpha}}
\]
Thus $g$ is eventually constant. Select $\delta\in\left[0,L\left(f\right)\right)$
such that for $\beta\geq\gamma\geq\delta$, $g\left(\beta\right)=g\left(\gamma\right)$.
Since $v$ is $\alpha$ source-regular, $g\left(\delta\right)\not=0$,
so choose $e\in f\left(\delta\right)\Lambda^{\omega^{\alpha}}$. We
claim $f\leq e$. For $\beta<\delta$, we have $f\left(\beta\right)=f\left(\delta\right)_{\beta}=e_{\beta}$.
Suppose $\beta\geq\delta$ and $f\left(\beta\right)\not=e_{\beta}$.
Then $e\in f\left(\delta\right)\Lambda^{\omega^{\alpha}}\backslash f\left(\beta\right)\Lambda^{\omega^{\alpha}}$,
and $g\left(\beta\right)<g\left(\delta\right)$. This contradicts
the construction of $\delta$, so indeed $f\left(\beta\right)=e_{\beta}$.
Since $f\leq e$ and $f\in\Lambda^{\infty}$, $e=f$, and $L\left(f\right)=L\left(e\right)=d\left(e\right)+1=\omega^{\alpha}+1>\omega^{\alpha}$.
\end{proof}
\begin{thm}
\label{thm:reg-rep}If $\Lambda$ is an ordinal graph and $\mathcal{O}\left(\Lambda\right)$
has generators $\left\{ T_{e}:e\in\Lambda\right\} $, then there is
a representation $\lambda:\mathcal{O}\left(\Lambda\right)\rightarrow B\left(\ell^{2}\left(\Lambda^{\infty}\right)\right)$
such that
\[
\lambda\left(T_{e}\right)\xi_{f}=\begin{cases}
\xi_{ef} & s(e)=r(f)\\
0 & \text{otherwise}
\end{cases}
\]
In particular, $\lambda\left(T_{e}\right)\not=0$ for all $e\in\Lambda$.
\end{thm}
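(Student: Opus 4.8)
\section*{Proof proposal}

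The plan is to build the candidate operators on $\ell^{2}\left(\Lambda^{\infty}\right)$ directly from the formula, show they form a Cuntz--Krieger $\Lambda$-family in the sense of \corref{gen-relations}, invoke the universal property to obtain $\lambda$, and finally read off $\lambda\left(T_{e}\right)\not=0$ from \propref{extend-to-inf}. Let $\left\{ \xi_{f}:f\in\Lambda^{\infty}\right\} $ be the standard orthonormal basis of $\ell^{2}\left(\Lambda^{\infty}\right)$ and, for $e\in\Lambda$, define $S_{e}$ on basis vectors by $S_{e}\xi_{f}=\xi_{ef}$ when $s(e)=r(f)$ and $S_{e}\xi_{f}=0$ otherwise.

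First I would check each $S_{e}$ is a well-defined partial isometry. By the lemma characterizing when $ef\in\Lambda^{\infty}$, if $f\in\Lambda^{\infty}$ and $r(f)=s(e)$ then $ef\in\Lambda^{\infty}$, so $S_{e}$ maps $\ell^{2}\left(\Lambda^{\infty}\right)$ into itself on basis vectors; and $f\mapsto ef$ is injective on $\left\{ f\in\Lambda^{\infty}:r(f)=s(e)\right\} $, since $ef=eg$ forces $f(\beta)=g(\beta)$ for all $\beta$ by comparing coordinates past $d(e)$ and using left cancellation. Hence $S_{e}$ carries an orthonormal set to an orthonormal set and extends to a partial isometry with initial space $\overline{\mathrm{span}}\left\{ \xi_{f}:r(f)=s(e)\right\} $ and range $\overline{\mathrm{span}}\left\{ \xi_{g}:g\in e\Lambda^{\infty}\right\} $. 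A short computation with inner products gives $S_{e}^{*}\xi_{g}=\xi_{h}$ when $g=eh$ (the path $h$ being unique) and $S_{e}^{*}\xi_{g}=0$ otherwise, and for a vertex $v$, $S_{v}$ is the projection onto $\overline{\mathrm{span}}\left\{ \xi_{f}:r(f)=v\right\} $.

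Next I verify relations (1)--(4) of \corref{gen-relations}. Relations (1) and (2) follow immediately from the formulas for $S_{e}$, $S_{e}^{*}$, together with $vf=f$ for vertices $v$ and the associativity $(ef)g=e(fg)$ from \lemref{path-star-mult-assoc}. Relation (3) holds because when $f\in e\Lambda$ we have $f\Lambda^{\infty}\subseteq e\Lambda^{\infty}$, so $S_{f}S_{f}^{*}\leq S_{e}S_{e}^{*}$ and $S_{e}S_{e}^{*}S_{f}S_{f}^{*}=S_{f}S_{f}^{*}$. The substance is relation (4): if $v$ is $\alpha$-regular we must show $S_{v}=\sum_{e\in v\Lambda^{\omega^{\alpha}}}S_{e}S_{e}^{*}$. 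The sum is finite because $v$ is $\alpha$-row-finite, and it consists of pairwise orthogonal projections since distinct $e,e'\in v\Lambda^{\omega^{\alpha}}$ yield disjoint $e\Lambda^{\infty}$, $e'\Lambda^{\infty}$ by unique factorization. For the covering, given $f\in\Lambda^{\infty}$ with $r(f)=v$, \lemref{reg-inf-path} gives $L(f)>\omega^{\alpha}$, so $f\left(\omega^{\alpha}\right)\in v\Lambda^{\omega^{\alpha}}$ and $f\in f\left(\omega^{\alpha}\right)\Lambda^{\infty}$, whence $\xi_{f}$ lies in the range of $S_{f\left(\omega^{\alpha}\right)}S_{f\left(\omega^{\alpha}\right)}^{*}$. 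Thus $\left\{ S_{e}:e\in\Lambda\right\} $ is a Cuntz--Krieger $\Lambda$-family, and \corref{gen-relations} produces the desired $\lambda:\mathcal{O}\left(\Lambda\right)\rightarrow B\left(\ell^{2}\left(\Lambda^{\infty}\right)\right)$ with $\lambda\left(T_{e}\right)=S_{e}$. For non-vanishing, apply \propref{extend-to-inf} to $e$ to obtain $f\in\Lambda^{\infty}$ with $L(f)>d(e)$ and $f\left(d(e)\right)=e$; then $f=eg$ for some $g\in\Lambda^{\infty}$ with $r(g)=s(e)$, so $\lambda\left(T_{e}\right)\xi_{g}=\xi_{eg}=\xi_{f}\not=0$. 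I expect the only genuine obstacle to be relation (4), where \lemref{reg-inf-path} is exactly what is needed to split an infinite path out of an $\alpha$-regular vertex at length $\omega^{\alpha}$; the remaining verifications are routine bookkeeping with the definitions of $\Lambda^{*}$ and $\Lambda^{\infty}$.
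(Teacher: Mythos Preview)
Your proposal is correct and follows essentially the same approach as the paper: define the operators on basis vectors, compute the adjoint, verify the four relations of \corref{gen-relations} (with \lemref{path-star-mult-assoc} for relation~(2) and \lemref{reg-inf-path} for relation~(4)), invoke the universal property, and use \propref{extend-to-inf} for non-vanishing. The only cosmetic difference is that you spell out the injectivity of $f\mapsto ef$ and the partial-isometry structure a bit more explicitly than the paper does.
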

\begin{proof}
We will construct a family of operators $\mathcal{V}=\left\{ V_{e}:e\in\Lambda\right\} $
satisfying the relations in \corref{gen-relations}. Define $V_{e}\xi_{f}$
by the formula above, i.e. $V_{e}\xi_{f}=\xi_{ef}$ if $s(e)=r(f)$
and $V_{e}\xi_{f}=0$ otherwise. For $f,g\in\Lambda^{\infty}$,
\[
\left\langle V_{e}\xi_{f},\xi_{g}\right\rangle =\left\langle \xi_{f},V_{e}^{*}\xi_{g}\right\rangle =\begin{cases}
1 & s(e)=r(f)\text{ and }g=ef\\
0 & \text{otherwise}
\end{cases}
\]
Thus we get the following formula for $V_{e}^{*}$.
\[
V_{e}^{*}\xi_{g}=\begin{cases}
\xi_{f} & g=ef\\
0 & g\not\in e\Lambda^{\infty}
\end{cases}
\]
Now we will verify the relations in \corref{gen-relations}. For relation
1, we have $V_{e}^{*}V_{e}\xi_{f}=0$ if $r(f)\not=s(e)$ and $V_{e}^{*}V_{e}\xi_{f}=\xi_{f}$
if $r(f)=s(e)$, so $V_{e}^{*}V_{e}=V_{s\left(e\right)}$. Relation
2 follows from \lemref{path-star-mult-assoc}. For relations 3 and
4, note that
\[
V_{e}V_{e}^{*}\xi_{g}=\begin{cases}
\xi_{f} & g=eh\text{ for some }h\in\Lambda^{\infty}\\
0 & \text{otherwise}
\end{cases}
\]
If $f\in e\Lambda$ and $g=fh$ for $h\in\Lambda^{\infty}$, then
$g=e\left(f^{d(e)}h\right)$ by \lemref{path-star-mult-assoc}. Hence
$V_{e}V_{e}^{*}V_{f}V_{f}^{*}=V_{f}V_{f}^{*}$, which implies relation
3. Finally, to verify relation 4 let $v\in\Lambda_{0}$ be an $\alpha$-regular
vertex. By \lemref{reg-inf-path}, for every $f\in\Lambda^{\infty}$
with $r\left(f\right)=v$, there exists $g\in v\Lambda^{\omega^{\alpha}}$
such that $f\in g\Lambda^{\infty}$. Hence
\[
V_{v}=\sum_{g\in v\Lambda^{\omega^{\alpha}}}V_{g}V_{g}^{*}
\]
Since $\left\{ V_{e}:e\in\Lambda\right\} $ satisfies the relations
of \corref{gen-relations}, there exists a {*}-homomorphism $\lambda:\mathcal{O}\left(\Lambda\right)\rightarrow B\left(\ell^{2}\left(\Lambda^{\infty}\right)\right)$
with $\lambda\left(T_{e}\right)=V_{e}$. Then \propref{extend-to-inf}
shows that for every $e\in\Lambda$ there is $f\in\Lambda^{\infty}$
such that $r\left(f\right)=s\left(e\right)$, and hence $\lambda\left(T_{e}\right)\xi_{f}=\xi_{ef}\not=0$.
\end{proof}
\begin{cor}
\label{cor:distinct-generators}If $\Lambda$ is an ordinal graph
and $\mathcal{O}\left(\Lambda\right)$ is generated by $\left\{ T_{e}:e\in\Lambda\right\} $,
then for every $\alpha\in\mathrm{Ord}$, the map $e\mapsto T_{e}$
is injective on $\Lambda^{\alpha}$.
\end{cor}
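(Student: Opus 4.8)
The plan is to deduce everything from the regular representation $\lambda\colon\mathcal{O}(\Lambda)\to B(\ell^{2}(\Lambda^{\infty}))$ of \thmref{reg-rep}. Fix $\alpha\in\mathrm{Ord}$ and suppose $e,f\in\Lambda^{\alpha}$ with $e\neq f$. Since $\lambda$ is a $*$-homomorphism, it suffices to produce $g\in\Lambda^{\infty}$ with $\lambda(T_{e})\xi_{g}\neq\lambda(T_{f})\xi_{g}$; then $\lambda(T_{e})\neq\lambda(T_{f})$, hence $T_{e}\neq T_{f}$.

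First I would dispose of the case $s(e)\neq s(f)$, which in particular covers the case of distinct vertices since $s(v)=v$. Applying \propref{extend-to-inf} to the vertex $s(e)$ yields $g\in\Lambda^{\infty}$ with $r(g)=s(e)$ (and $L(g)>0$). Then $\lambda(T_{e})\xi_{g}=\xi_{eg}\neq 0$, whereas $\lambda(T_{f})\xi_{g}=0$ because $r(g)=s(e)\neq s(f)$, and we are done.

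The substantive case is $s(e)=s(f)=:v$, where the failure of right cancellation has to be circumvented. Again by \propref{extend-to-inf}, applied to $v$, choose $g\in\Lambda^{\infty}$ with $r(g)=v$ and $L(g)>0$. Now $\lambda(T_{e})\xi_{g}=\xi_{eg}$ and $\lambda(T_{f})\xi_{g}=\xi_{fg}$ are both nonzero, so they cannot be separated by a vanishing argument; instead I recover $e$ and $f$ as initial segments of the infinite paths $eg,fg\in\Lambda^{*}$. Since $d(e)=d(f)=\omega^{\alpha}$ and $L(eg)=d(e)+L(g)>\omega^{\alpha}$, the defining formula for the composition $eg\in\Lambda^{*}$ gives $(eg)(\omega^{\alpha})=e_{\omega^{\alpha}}=e$, and likewise $(fg)(\omega^{\alpha})=f$. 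Hence $eg=fg$ would force $e=(eg)(\omega^{\alpha})=(fg)(\omega^{\alpha})=f$, contradicting $e\neq f$; so $\xi_{eg}\neq\xi_{fg}$ and $\lambda(T_{e})\neq\lambda(T_{f})$.

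The one genuine point, and the only place any input beyond \thmref{reg-rep} is used, is the last observation: although $\Lambda$ need not be right cancellative and neither $eg$ nor $fg$ is zero, the equality of lengths $d(e)=d(f)$ lets us read $e$ and $f$ back off as the truncations of $eg$ and $fg$ at coordinate $\omega^{\alpha}$ — which works precisely because we extended by an element of $\Lambda^{\infty}$ rather than composing within $\Lambda$.
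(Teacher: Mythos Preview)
Your argument is correct. It differs from the paper's proof, which is purely algebraic: the paper observes that for $e\neq f$ with $d(e)=d(f)$, unique factorization forces $e\Lambda\cap f\Lambda=\emptyset$ (if $e=fg$ then $d(g)=0$, so $e=f$); then \lemref{basic-calcs}(d) gives $T_{e}^{*}T_{f}=0$, and since $T_{s(e)}\neq 0$ by \thmref{reg-rep}, one cannot have $T_{e}=T_{f}$. Your route instead pushes everything through the concrete representation $\lambda$ and separates $T_{e}$ from $T_{f}$ by exhibiting a basis vector on which they act differently. Both arguments ultimately rest on \thmref{reg-rep}; the paper's is shorter and yields the stronger intermediate fact $T_{e}^{*}T_{f}=0$, while yours is more hands-on and avoids invoking \lemref{finite-align} and \lemref{basic-calcs}.

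One small remark on your closing comment: the recovery of $e$ as the $\omega^{\alpha}$-truncation of $eg$ is not special to $\Lambda^{\infty}$. Already inside $\Lambda$, unique factorization gives $(eg)_{d(e)}=e$ whenever $d(e)\leq d(eg)$, so $eg=fg$ with $d(e)=d(f)$ would force $e=f$ there too. What fails in $\Lambda$ is right cancellation when the lengths differ; the equality $d(e)=d(f)$ is doing all the work, not the passage to infinite paths.
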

\begin{proof}
Let $e,f\in\Lambda^{\alpha}$. If $e=fg$, then $d\left(e\right)=d\left(f\right)+d\left(g\right)$.
Subtracting $d\left(e\right)$ from both sides, we have $d\left(g\right)=0$,
hence $g=s\left(f\right)$ and $e=f$. Thus if $e\not=f$, \lemref{finite-align}
implies $e\Lambda\cap f\Lambda=\emptyset$. By \lemref{basic-calcs},
we obtain $T_{e}^{*}T_{f}=0$. Then $T_{e}\not=T_{f}$, otherwise
$T_{e}^{*}T_{f}=T_{e}^{*}T_{e}=T_{s\left(e\right)}\not=0$.
\end{proof}

\section{Correspondences}

In this section, we show that $\mathcal{O}\left(\Lambda\right)$ may
be represented using $\mathrm{C}^{*}$-correspondences in a manner
which generalizes the case for directed graphs. Instead of one correspondence,
we construct infinitely many correspondences $X_{\alpha}$ over the
algebras $\rho_{\alpha}^{\alpha+1}\left(\mathcal{O}\left(\Lambda_{\alpha}\right)\right)$
defined in \propref{inductive-system}. The correspondences are all
related by \thmref{Cuntz-Krieger-Uniqueness-Theorem}; in particular,
when $\Lambda$ is well-behaved, each $X_{\alpha+1}$ is a correspondence
over the Cuntz-Pimsner algebra $\mathcal{O}\left(X_{\alpha}\right)$. 
\begin{thm}
Let $\alpha\in\mathrm{Ord}$ be fixed and $\mathcal{A}_{\alpha}=\rho_{\alpha}^{\alpha+1}\left(\mathcal{O}\left(\Lambda_{\alpha}\right)\right)\subseteq\mathcal{O}\left(\Lambda_{\alpha+1}\right)$.
Suppose $\mathcal{O}\left(\Lambda_{\alpha+1}\right)$ is generated
by $\left\{ T_{e}:e\in\Lambda_{\alpha}\right\} $. Define
\[
X_{\alpha}=\overline{\mathrm{span}}\left\{ T_{e}T_{f}^{*}\in\mathcal{O}\left(\Lambda_{\alpha+1}\right):\omega^{\alpha}\leq d\left(e\right)<\omega^{\alpha}\cdot2,f\in\Lambda_{\alpha}\right\} 
\]
For $x,y\in X_{\alpha}$ and $a\in\mathcal{A}_{\alpha}$, define 
\begin{align*}
x\cdot a & =xa\in X_{\alpha}\\
\varphi_{\alpha}\left(a\right)x & =a\cdot x=ax\in X_{\alpha}\\
\left\langle x,y\right\rangle _{\alpha} & =x^{*}y\in\mathcal{A}_{\alpha}
\end{align*}
where $\varphi_{\alpha}:\mathcal{A}_{\alpha}\rightarrow\mathcal{L}\left(X_{\alpha}\right)$.
Then $\left(X_{\alpha},\varphi_{\alpha}\right)$ is a correspondence
over $\mathcal{A}_{\alpha}$.
\end{thm}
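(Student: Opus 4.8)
The plan is to check the data of a $\mathrm{C}^{*}$-correspondence in turn, using that $X_{\alpha}$ and $\mathcal{A}_{\alpha}$---the latter a $\mathrm{C}^{*}$-subalgebra of $\mathcal{O}\left(\Lambda_{\alpha+1}\right)$, being the image of the $*$-homomorphism $\rho_{\alpha}^{\alpha+1}$---both live inside the single $\mathrm{C}^{*}$-algebra $\mathcal{O}\left(\Lambda_{\alpha+1}\right)$: once the three closure statements $X_{\alpha}\cdot\mathcal{A}_{\alpha}\subseteq X_{\alpha}$, $\mathcal{A}_{\alpha}\cdot X_{\alpha}\subseteq X_{\alpha}$, and $\left\langle X_{\alpha},X_{\alpha}\right\rangle _{\alpha}\subseteq\mathcal{A}_{\alpha}$ are established, all the module axioms, the inner-product axioms, and the $*$-homomorphism property of $\varphi_{\alpha}$ are inherited from the ambient algebra. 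So the substance lies in these closures.

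First I would record the ordinal arithmetic driving the closure computations. By the division algorithm, $\omega^{\alpha}\leq d\left(e\right)<\omega^{\alpha}\cdot2$ holds precisely when $d\left(e\right)=\omega^{\alpha}+\delta$ for a unique $\delta<\omega^{\alpha}$; moreover $\beta<\omega^{\alpha}$ gives the left-absorption $\beta+\omega^{\alpha}=\omega^{\alpha}$, while $\beta,\gamma<\omega^{\alpha}$ gives $\beta+\gamma<\omega^{\alpha}$. Consequently, if $\beta<\omega^{\alpha}$ and $\mu\in\left[\omega^{\alpha},\omega^{\alpha}\cdot2\right)$ then both $\beta+\mu$ and $\mu+\beta$ lie in $\left[\omega^{\alpha},\omega^{\alpha}\cdot2\right)$ (indeed $\beta+\mu=\mu$), and if $\mu\leq\nu$ in $\left[\omega^{\alpha},\omega^{\alpha}\cdot2\right)$ then $-\mu+\nu<\omega^{\alpha}$. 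These are exactly the length bookkeeping identities used below.

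Next I would establish the closures via \lemref{basic-calcs}(d), which reduces each product $T_{a}^{*}T_{b}$ to $T_{j}$ (if $b=aj$), $T_{j}^{*}$ (if $a=bj$), or $0$ (if $a\Lambda\cap b\Lambda=\emptyset$), always with $d\left(j\right)\leq\max\left\{ d(a),d(b)\right\}$. For the right action, $\left(T_{e}T_{f}^{*}\right)\left(T_{g}T_{h}^{*}\right)=T_{e}\left(T_{f}^{*}T_{g}\right)T_{h}^{*}$ with $d\left(e\right)\in\left[\omega^{\alpha},\omega^{\alpha}\cdot2\right)$ and $d\left(f\right),d\left(g\right),d\left(h\right)<\omega^{\alpha}$: here $d\left(j\right)<\omega^{\alpha}$, so the product is $T_{ej}T_{h}^{*}$, or $T_{e}\left(T_{h}T_{j}\right)^{*}=T_{e}T_{hj}^{*}$ by relation 2, or $0$, and the arithmetic above (namely $d(e)+d(j)\in\left[\omega^{\alpha},\omega^{\alpha}\cdot2\right)$ and $d(h)+d(j)<\omega^{\alpha}$) returns these to the spanning set of $X_{\alpha}$; bilinearity and continuity of multiplication then give $X_{\alpha}\cdot\mathcal{A}_{\alpha}\subseteq X_{\alpha}$. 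Symmetrically, in $\left(T_{g}T_{h}^{*}\right)\left(T_{e}T_{f}^{*}\right)=T_{g}\left(T_{h}^{*}T_{e}\right)T_{f}^{*}$ the inequality $d\left(h\right)<\omega^{\alpha}\leq d\left(e\right)$ forces $T_{h}^{*}T_{e}$ to be $T_{j}$ with $d\left(j\right)=-d\left(h\right)+d\left(e\right)=d(e)\in\left[\omega^{\alpha},\omega^{\alpha}\cdot2\right)$ or $0$, producing $T_{gj}T_{f}^{*}\in X_{\alpha}$, so $\mathcal{A}_{\alpha}\cdot X_{\alpha}\subseteq X_{\alpha}$. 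Finally $\left\langle T_{e}T_{f}^{*},T_{e'}T_{f'}^{*}\right\rangle _{\alpha}=T_{f}\left(T_{e}^{*}T_{e'}\right)T_{f'}^{*}$ with $d\left(e\right),d\left(e'\right)\in\left[\omega^{\alpha},\omega^{\alpha}\cdot2\right)$ reduces via $T_{e}^{*}T_{e'}\in\left\{ T_{j},T_{j}^{*},0\right\}$ with $d\left(j\right)<\omega^{\alpha}$ to $T_{fj}T_{f'}^{*}$, $T_{f}\left(T_{f'}T_{j}\right)^{*}=T_{f}T_{f'j}^{*}$, or $0$, each of the form $T_{p}T_{q}^{*}$ with $d\left(p\right),d\left(q\right)<\omega^{\alpha}$, hence in $\mathcal{A}_{\alpha}=\rho_{\alpha}^{\alpha+1}\left(\mathcal{O}\left(\Lambda_{\alpha}\right)\right)$ by \corref{gens-word}; so $\left\langle X_{\alpha},X_{\alpha}\right\rangle _{\alpha}\subseteq\mathcal{A}_{\alpha}$.

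With the closures in hand, the rest is inherited from $\mathcal{O}\left(\Lambda_{\alpha+1}\right)$: the right action is an associative, bilinear $\mathcal{A}_{\alpha}$-action; $\left\langle x,y\right\rangle _{\alpha}=x^{*}y$ is $\mathcal{A}_{\alpha}$-linear in the second variable, satisfies $\left\langle x,y\right\rangle _{\alpha}^{*}=\left\langle y,x\right\rangle _{\alpha}$ and $\left\langle x,x\right\rangle _{\alpha}=x^{*}x\geq0$ with $\left\langle x,x\right\rangle _{\alpha}=0$ forcing $\left\Vert x\right\Vert ^{2}=0$, and its induced norm is the restriction of the $\mathrm{C}^{*}$-norm, so the norm-closed space $X_{\alpha}$ is complete; and $\varphi_{\alpha}\left(a\right)$ is bounded, multiplicative and linear in $a$, and adjointable with $\varphi_{\alpha}\left(a\right)^{*}=\varphi_{\alpha}\left(a^{*}\right)$ since $\left\langle ax,y\right\rangle _{\alpha}=x^{*}a^{*}y=\left\langle x,a^{*}y\right\rangle _{\alpha}$, so $\varphi_{\alpha}\colon\mathcal{A}_{\alpha}\to\mathcal{L}\left(X_{\alpha}\right)$ is a $*$-homomorphism. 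The main obstacle is the closure paragraph: since $\mathrm{Ord}$ under addition is neither commutative nor right cancellative, one must thread the ordinal arithmetic---especially the left-absorption $\beta+\omega^{\alpha}=\omega^{\alpha}$ and the normal form $d(e)=\omega^{\alpha}+\delta$---carefully through each case of \lemref{basic-calcs}(d) to be sure every reduced term lands in the intended spanning set.
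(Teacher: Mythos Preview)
Your proposal is correct and follows essentially the same approach as the paper: both reduce the problem to checking the three closure statements on spanning elements of the form $T_{e}T_{f}^{*}$ via \lemref{basic-calcs}(d) and the ordinal arithmetic of the interval $[\omega^{\alpha},\omega^{\alpha}\cdot 2)$, then invoke the ambient $\mathrm{C}^{*}$-algebra structure for the remaining axioms. If anything, your write-up is slightly more complete than the paper's, since you explicitly verify the left-action closure $\mathcal{A}_{\alpha}\cdot X_{\alpha}\subseteq X_{\alpha}$ (needed for $\varphi_{\alpha}$ to land in $\mathcal{L}(X_{\alpha})$), which the paper leaves implicit.
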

\begin{proof}
Fix $\alpha\in\mathrm{Ord}$. Define
\begin{align*}
\mathcal{B} & =\text{\ensuremath{\mathrm{span}}}\left\{ T_{e}T_{f}^{*}:e,f\in\Lambda_{\alpha}\right\} \\
Y & =\text{\ensuremath{\mathrm{span}}}\left\{ T_{e}T_{f}^{*}\in\mathcal{O}\left(\Lambda_{\alpha+1}\right):\omega^{\alpha}\leq d\left(e\right)<\omega^{\alpha}\cdot2,f\in\Lambda_{\alpha}\right\} 
\end{align*}
By \corref{gens-word}, $\overline{\mathcal{B}}=\mathcal{A}_{\alpha}$,
and we also have $\overline{Y}=X_{\alpha}$. First we show that $Y$
is closed under the right action of $\mathcal{B}$. If $x=T_{e}T_{f}^{*}$
for some $\omega^{\alpha}\leq d\left(e\right)<\omega^{\alpha}\cdot2$
and $f\in\Lambda_{\alpha}$, choose $g,h\in\Lambda$ such that $d\left(g\right)=\omega^{\alpha}$
and $e=gh$. Then $x=T_{g}T_{h}T_{f}^{*}$ for $h,f\in\Lambda_{\alpha}$.
If $a=T_{p}T_{q}^{*}$ for $p,q\in\Lambda_{\alpha}$, then by \lemref{basic-calcs},
$x\cdot a=T_{g}T_{h}T_{f}^{*}T_{p}T_{q}^{*}$ is either $0$ or $T_{g}T_{r}T_{s}^{*}$
for some $r,s\in\Lambda_{\alpha}$. Since $\mathcal{B}$ and $Y$
are the span of elements of these forms, it follows that $Y$ is closed
under the right action of $\mathcal{B}$.

Next we will prove the function $\left(x,y\right)\mapsto x^{*}y=\left\langle x,y\right\rangle _{\alpha}$
with domain $Y\times Y$ maps into $\mathcal{B}$. Let $x=T_{e}T_{f}^{*}$
and $y\in T_{g}T_{h}^{*}$ with $\omega^{\alpha}\leq\min\left\{ d\left(e\right),d\left(g\right)\right\} $,
$\max\left\{ d\left(e\right),d\left(g\right)\right\} <\omega^{\alpha}\cdot2$,
and $f,h\in\Lambda_{\alpha}$. Then
\[
\left\langle x,y\right\rangle _{\alpha}=\left(T_{e}T_{f}^{*}\right)^{*}T_{g}T_{h}^{*}=T_{f}T_{e}^{*}T_{g}T_{h}^{*}
\]
As $T_{f},T_{h}^{*}\in\mathcal{B}$ and $\mathcal{B}$ is a linear
subspace, it suffices to show $T_{e}^{*}T_{g}\in\mathcal{B}$. By
taking adjoints, we may assume without loss of generality $d\left(e\right)\leq d\left(g\right)$.
Then \lemref{basic-calcs} implies that either $T_{e}^{*}T_{g}=0$,
or there exists $p\in\Lambda$ such that $g=ep$ and $T_{e}^{*}T_{g}=T_{p}$.
If $g=ep$, then $\omega^{\alpha}\leq d\left(g\right)=d\left(e\right)+d\left(p\right)<\omega^{\alpha}\cdot2$.
Subtracting $\omega^{\alpha}$ from both sides, we see $-\omega^{\alpha}+d\left(e\right)+d\left(p\right)<-\omega^{\alpha}+\omega^{\alpha}\cdot2=\omega^{\alpha}$,
and in particular, $d\left(p\right)<\omega^{\alpha}$. Thus $T_{e}^{*}T_{g}=T_{p}\in\mathcal{B}$,
as desired.

The operation $\left\langle \cdot,\cdot\right\rangle _{\alpha}$ is
easily verified to be an inner product on $Y$. Moreover, the inner
product on $Y$ induces a norm which agrees with the C{*}-norm, as
the following calculation shows.
\[
\left\Vert x\right\Vert _{Y}=\left\Vert \left\langle x,x\right\rangle \right\Vert _{\mathcal{O}\left(\Lambda_{\alpha+1}\right)}^{1/2}=\left\Vert x^{*}x\right\Vert _{\mathcal{O}\left(\Lambda_{\alpha+1}\right)}^{1/2}=\left\Vert x\right\Vert _{\mathcal{O}\left(\Lambda_{\alpha+1}\right)}
\]
Additionally, the right action of $\mathcal{B}$ is continuous, hence
extends to a right action of $\mathcal{A}_{\alpha}$ on $X_{\alpha}$
which agrees with right multiplication in $\mathcal{O}\left(\Lambda_{\alpha+1}\right)$.
Similarly, the $\mathcal{B}$-valued inner product on $Y$ extends
to an inner product on $X_{\alpha}$ valued in $\mathcal{A}_{\alpha}$.

All that is left is to verify that $\varphi_{\alpha}$ is a {*}-homomorphism
into $\mathcal{L}\left(X_{\alpha}\right)$. The following calculation
does this.
\begin{align*}
\left\langle \varphi_{\alpha}\left(a\right)x,y\right\rangle _{\alpha} & =\left(\varphi_{\alpha}\left(a\right)x\right)^{*}y=\left(ax\right)^{*}y\\
 & =x^{*}a^{*}y=x^{*}\left(\varphi_{\alpha}\left(a^{*}\right)y\right)=\left\langle x,\varphi_{\alpha}\left(a^{*}\right)y\right\rangle _{\alpha}
\end{align*}
\end{proof}
\begin{example}
\label{exa:graph-correspondence}Consider $X_{\alpha}$ for $\alpha=0$.
Then $\mathcal{A}_{0}=\rho_{0}^{1}\left(\mathcal{O}\left(\Lambda_{0}\right)\right)$,
and $\mathcal{O}\left(\Lambda_{0}\right)=c_{0}\left(\Lambda_{0}\right)$.
Since for each $v\in\Lambda_{0}$ we have $T_{v}\not=0$, $\rho_{0}^{1}$
is injective. In particular, $\mathcal{A}_{0}$ is isomorphic to $c_{0}\left(\Lambda_{0}\right)$.
Moreover, if $e,f\in\Lambda$ and $d\left(f\right)=0$, $T_{e}T_{f}^{*}=T_{e}T_{f}\in\left\{ 0,T_{e}\right\} $.
Thus $X_{0}=\overline{\mathrm{span}}\left\{ T_{e}:e\in\Lambda^{1}\right\} $,
and by \corref{distinct-generators} the elements of $\left\{ T_{e}:e\in\Lambda^{1}\right\} $
are distinct. In the above proof, we have $Y=\mathrm{span}\left\{ T_{e}:e\in\Lambda^{1}\right\} $
is isomorphic as an inner product $c_{0}\left(\Lambda_{0}\right)$-module
to $c_{c}\left(\Lambda^{1}\right)$, where for $a\in c_{0}\left(\Lambda_{0}\right)$
and $x,y\in c_{c}\left(\Lambda^{1}\right)$,
\begin{align*}
\left(x\cdot a\right)\left(e\right) & =x\left(e\right)a\left(s\left(e\right)\right)\text{ for }e\in\Lambda^{1}\\
\left\langle x,y\right\rangle \left(v\right) & =\sum_{e\in\Lambda^{1}v}\overline{x\left(e\right)}y\left(e\right)\text{ for }v\in\Lambda^{0}
\end{align*}
Moreover, there is a left action $\phi_{\alpha}:c_{0}\left(\Lambda_{0}\right)\rightarrow\mathcal{L}\left(c_{c}\left(\Lambda^{1}\right)\right)$
defined by
\[
\left(\phi_{\alpha}\left(a\right)x\right)\left(e\right)=\left(a\cdot x\right)\left(e\right)=a\left(r\left(e\right)\right)x\left(e\right)\text{ for }e\in\Lambda^{1}
\]
which agrees with $\varphi_{\alpha}$ on $Y$. Thus by \cite[Proposition 3.10]{KATIDEAL},
$\mathcal{O}\left(X_{0}\right)\cong\mathcal{O}\left(\Lambda_{1}\right)$.
\end{example}
\begin{defn}
Let $\pi_{\alpha}:\mathcal{A}_{\alpha}\rightarrow\mathcal{O}\left(\Lambda_{\alpha+1}\right)$
be the inclusion, which is a {*}-homomorphism. Define $\psi_{\alpha}:X_{\alpha}\rightarrow\mathcal{O}\left(\Lambda_{\alpha}\right)$
to be the inclusion, which is an isometry.
\end{defn}
By the definition of $X_{\alpha}$, $\left(\psi_{\alpha},\pi_{\alpha}\right)$
is a Toeplitz representation of $X_{\alpha}$. Understanding the Katsura
ideal will be key to proving \thmref{Cuntz-Krieger-Uniqueness-Theorem}.
We recall the definition below.
\begin{defn}[{\cite[Definition 2.3]{KATSURA1}}]
The Katsura ideal $J_{\alpha}$ is defined as
\[
J_{\alpha}=J_{X_{\alpha}}=\varphi_{\alpha}^{-1}\left(\mathcal{K}\left(X_{\alpha}\right)\right)\cap\left(\ker\varphi_{\alpha}\right)^{\perp}
\]
\end{defn}
When $\Lambda$ is a directed graph, the Katsura ideal $J_{0}=c_{0}\left(\left\{ v\in\Lambda_{0}:0<\left|r^{-1}\left(v\right)\right|<\infty\right\} \right)$
is the ideal generated by the vertex projections of $0$ regular vertices.
We would like know in general whether $J_{\alpha}$ is generated by
the vertex projections $\left\{ T_{v}:v\text{ is }\alpha\text{ regular}\right\} $.
Unfortunately, this is more difficult to determine for $\alpha>0$
since the algebras $\mathcal{A}_{\alpha}$ are not necessarily commutative.
The following result, which will be important later, makes some progress
towards answering this question.
\begin{prop}
\label{prop:regular-compact}$v\in\Lambda_{0}$ is $\alpha$ regular
if and only if $T_{v}\in J_{\alpha}$.
\end{prop}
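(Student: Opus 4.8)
The plan is to unpack $J_\alpha=\varphi_\alpha^{-1}(\mathcal{K}(X_\alpha))\cap(\ker\varphi_\alpha)^\perp$ and lean on relation~4 of \corref{gen-relations}, which (applied inside $\mathcal{O}(\Lambda_{\alpha+1})$, where $\alpha$-regularity of $v$ is the same condition as in $\Lambda$) gives $T_v=\sum_{g\in v\Lambda^{\omega^\alpha}}T_gT_g^*$ with $v\Lambda^{\omega^\alpha}$ finite and nonempty whenever $v$ is $\alpha$-regular. Each such $T_g$ equals $T_gT_{s(g)}^*$ and hence lies in $X_\alpha$ (since $d(g)=\omega^\alpha\in[\omega^\alpha,\omega^\alpha\cdot 2)$ and $s(g)\in\Lambda_\alpha$), with $\langle T_g,x\rangle_\alpha=T_g^*x$ and $\theta_{T_g,T_g}(x)=T_gT_g^*x$ for $x\in X_\alpha$.

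For the implication that $\alpha$-regularity gives $T_v\in J_\alpha$, I would first observe that for every $x\in X_\alpha$ we have $\bigl(\sum_{g\in v\Lambda^{\omega^\alpha}}\theta_{T_g,T_g}\bigr)(x)=\sum_g T_gT_g^*x=T_vx=\varphi_\alpha(T_v)x$, so $\varphi_\alpha(T_v)$ is a finite sum of rank-one operators and therefore lies in $\mathcal{K}(X_\alpha)$. For the annihilator condition, if $c\in\ker\varphi_\alpha$ then $cT_g=\varphi_\alpha(c)T_g=0$ for each $g\in v\Lambda^{\omega^\alpha}$, hence $cT_v=\sum_g(cT_g)T_g^*=0$; taking adjoints and using that $\ker\varphi_\alpha$ is $*$-closed gives $T_vb=0$ for all $b\in\ker\varphi_\alpha$, so $T_v\in(\ker\varphi_\alpha)^\perp$, and thus $T_v\in J_\alpha$.

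For the converse I would argue by contraposition, using that if $v$ is not $\alpha$-regular then either $v$ is not $\alpha$-source-regular or $v\Lambda^{\omega^\alpha}$ is infinite. In the first case, pick $e\in v\Lambda_\alpha$ with $s(e)$ an $\alpha$-source (the sub-case that $v$ itself is an $\alpha$-source is covered by taking $e=v$). Since $d(e)<\omega^\alpha$ we have $T_e\in\mathcal{A}_\alpha$, and for any spanning element $T_gT_f^*$ of $X_\alpha$ the product $T_eT_g$ vanishes, because $T_eT_g\ne0$ would force $r(g)=s(e)$ with $d(g)\ge\omega^\alpha$, contradicting that $s(e)$ is an $\alpha$-source. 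Hence $\varphi_\alpha(T_e)=0$, i.e.\ $T_e\in\ker\varphi_\alpha$, while $T_vT_e=T_e$, which is nonzero by \thmref{reg-rep}; so $T_v\notin(\ker\varphi_\alpha)^\perp$ and $T_v\notin J_\alpha$. In the second case, fix distinct $g_1,g_2,\dots\in v\Lambda^{\omega^\alpha}$ and set $x_n=T_{g_n}\in X_\alpha$, each a unit vector with $\varphi_\alpha(T_v)x_n=x_n$. The key estimate is that $\langle z,x_n\rangle_\alpha=z^*T_{g_n}\to0$ in norm for every $z\in X_\alpha$: on a spanning element $z=T_eT_f^*$ with $\omega^\alpha\le d(e)$, \lemref{basic-calcs} together with \lemref{finite-align} shows $T_fT_e^*T_{g_n}$ is nonzero only for the single index $n$ with $g_n=e_{\omega^\alpha}$, so $\langle z,x_n\rangle_\alpha$ is eventually $0$ for $z$ in the span of such elements, and the general case follows by density of that span in $X_\alpha$. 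If $\varphi_\alpha(T_v)$ were compact, write $\varphi_\alpha(T_v)=\sum_{i=1}^N\theta_{y_i,z_i}+R$ with $\|R\|<1/2$; then for $n$ large enough $1=\|x_n\|=\|\varphi_\alpha(T_v)x_n\|\le\bigl\|\sum_i y_i\langle z_i,x_n\rangle_\alpha\bigr\|+\|R\|<1$, a contradiction. So $\varphi_\alpha(T_v)\notin\mathcal{K}(X_\alpha)$ and $T_v\notin J_\alpha$.

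The main obstacle is the non-compactness estimate in the last case, i.e.\ establishing $\langle z,x_n\rangle_\alpha\to0$: it has to be done via the density of the span of the generators, since $\sum_n T_{g_n}T_{g_n}^*$ need not converge in norm. Everything else is bookkeeping with relations 1--4 and the identities of \lemref{basic-calcs}.
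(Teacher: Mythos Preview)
Your proof is correct and follows essentially the same approach as the paper's. Both directions match: the $\alpha$-regular $\Rightarrow T_v\in J_\alpha$ implication uses relation~4 to write $\varphi_\alpha(T_v)$ as a finite sum of $\theta_{T_g,T_g}$ and the annihilator condition via $cT_g=0$; the converse splits into the same two cases, with the source-regularity case handled identically and the row-finiteness case via the same non-compactness argument. The only cosmetic difference is that the paper approximates the $y_n$ by elements of $\mathrm{span}\,Y_\alpha$ and then picks a single $g$ avoiding finitely many bad indices, whereas you first prove $\langle z,x_n\rangle_\alpha\to 0$ for all $z$ and then invoke it; these are two phrasings of the same density-plus-finite-rank estimate.
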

\begin{proof}
Let $\left\{ T_{e}:e\in\Lambda_{\alpha+1}\right\} $ be the generators
of $\mathcal{O}\left(\Lambda_{\alpha+1}\right)$. Then $\mathcal{A}_{\alpha}$
is generated by $\left\{ T_{e}:e\in\Lambda_{\alpha}\right\} $. Suppose
$T_{v}\in J_{\alpha}$ and $e\in\Lambda_{\alpha}$ with $r(e)=v$
such that $s(e)$ is an $\alpha$ source. Then for $f\in\Lambda^{\omega^{\alpha}}$,
$T_{e}T_{f}=0$. Since $Y_{\alpha}$ is the $\mathcal{B}_{\alpha}$-span
of such $T_{f}$, $T_{e}\in\ker\varphi_{\alpha}$. However, $T_{v}T_{e}=T_{e}$,
which is non-zero by \thmref{reg-rep}. This contradicts $T_{v}\in\left(\ker\varphi_{\alpha}\right)^{\perp}$,
so $v$ is $\alpha$ source-regular.

To see that $v$ is $\alpha$ row-finite, suppose $v\Lambda^{\omega^{\alpha}}$
is infinite. Then for each $f\in v\Lambda^{\omega^{\alpha}}$, $\varphi_{\alpha}\left(T_{v}\right)T_{f}=T_{f}$.
We will show that this contradicts the compactness of $\varphi_{\alpha}\left(T_{v}\right)$.
Since $\varphi_{\alpha}\left(T_{v}\right)$ is compact, choose $m\in\mathbb{N}$
and $x_{n},y_{n}\in X_{\alpha}$ such that
\[
\left\Vert \varphi_{\alpha}\left(T_{v}\right)-\sum_{n=1}^{m}\theta_{x_{n},y_{n}}\right\Vert <\frac{1}{2}
\]
For fixed $x\in X_{\alpha}$, the map $y\mapsto\theta_{x,y}$ is continuous
in the operator norm topology, as shown by the following estimate:
\[
\left\Vert \left(\theta_{x,y}-\theta_{x,y'}\right)z\right\Vert =\left\Vert x\cdot\left\langle y-y',z\right\rangle \right\Vert \leq\left\Vert x\right\Vert \left\Vert z\right\Vert \left\Vert y-y'\right\Vert 
\]
Since $Y_{\alpha}=\mathrm{span}\left\{ T_{e}T_{f}^{*}:\omega^{\alpha}\leq d(e)<\omega^{\alpha}\cdot2,f\in\Lambda_{\alpha}\right\} $
is dense in $X_{\alpha}$, for each $n$ we may choose $y_{n}'\in Y_{\alpha}$
such that$\left\Vert y_{n}-y_{n}'\right\Vert <\frac{1}{2m\left\Vert x_{n}\right\Vert }$.
Then
\[
\left\Vert \sum_{n=1}^{m}\left(\theta_{x_{n},y_{n}}-\theta_{x_{n},y_{n}'}\right)\right\Vert <\sum_{n=1}^{m}\frac{\left\Vert x_{n}\right\Vert }{2m\left\Vert x_{n}\right\Vert }=\frac{1}{2}
\]
For $g\in\Lambda^{\omega^{\alpha}}$, $\left\langle T_{e}T_{f}^{*},T_{g}\right\rangle =T_{f}T_{e}^{*}T_{g}$
is non-zero only if $e_{\omega^{\alpha}}=g$. Each $y_{n}'$ is a
finite linear combination of elements of the form $T_{e}T_{f}^{*}$,
and in particular, there are only finitely many $g\in\Lambda^{\omega^{\alpha}}$
with $\left\langle y_{n}',T_{g}\right\rangle \not=0$. Choose $g\in v\Lambda^{\omega^{\alpha}}$
such that $\theta_{x_{n},y_{n}'}T_{g}=0$ for all $1\leq n\leq m$.
Then
\begin{align*}
\left\Vert \varphi_{\alpha}\left(T_{v}\right)T_{g}-\sum_{n=1}^{m}\theta_{x_{n},y_{n}}T_{g}\right\Vert  & =\left\Vert T_{g}-\sum_{n=1}^{m}\left(\theta_{x_{n},y_{n}}-\theta_{x_{n},y_{n}'}\right)T_{g}\right\Vert \\
 & \geq\left\Vert T_{g}\right\Vert -\left\Vert \sum_{n=1}^{m}\left(\theta_{x_{n},y_{n}}-\theta_{x_{n},y_{n}'}\right)T_{g}\right\Vert >1-\frac{1}{2}=\frac{1}{2}
\end{align*}
This is a contradiction, hence $v$ is $\alpha$ row-finite.

Suppose for the converse $v$ is $\alpha$ regular. Then $T_{v}=\sum_{f\in v\Lambda^{\omega^{\alpha}}}T_{f}T_{f}^{*}$.
If $a\in\ker\varphi_{\alpha}$, this implies
\[
aT_{v}=a\sum_{f\in v\Lambda^{\omega^{\alpha}}}T_{f}T_{f}^{*}=\sum_{f\in v\Lambda^{\omega^{\alpha}}}aT_{f}T_{f}^{*}=0
\]
Therefore $T_{v}\in\left(\ker\varphi_{\alpha}\right)^{\perp}$. Moreover,
for $x\in X_{\alpha}$, 
\[
\varphi_{\alpha}\left(T_{v}\right)x=T_{v}x=\sum_{f\in v\Lambda^{\omega^{\alpha}}}T_{f}T_{f}^{*}x=\sum_{f\in v\Lambda^{\omega^{\alpha}}}T_{f}\cdot\left\langle T_{f},x\right\rangle =\sum_{f\in v\Lambda^{\omega^{\alpha}}}\theta_{T_{f},T_{f}}x
\]
From this we see $\varphi_{\alpha}\left(T_{v}\right)=\sum_{f\in v\Lambda^{\omega^{\alpha}}}\theta_{T_{f},T_{f}}$,
and $T_{v}\in\left(\ker\varphi_{\alpha}\right)^{\perp}\cap\varphi_{\alpha}^{-1}\left(\mathcal{K}\left(X_{\alpha}\right)\right)=J_{\alpha}$.
\end{proof}

\section{Cuntz-Krieger Uniqueness}

Our next goal is to identify conditions under which a {*}-homomorphism
$\pi:\mathcal{O}\left(\Lambda\right)\rightarrow\mathcal{C}$ is injective.
Our strategy is to induct on $\alpha$ and prove $\pi$ restricted
to the algebras generated by $\left\{ T_{e}:e\in\Lambda_{\alpha}\right\} $
is injective. At each step of the induction we will apply \cite[Theorem 3.9]{CKU4CPA}.
First we review the definition of a non-returning vector for a correspondence.
\begin{defn}[{\cite[Definition 3.1]{CKU4CPA}}]
If $X$ is a $\mathrm{C}^{*}$-correspondence over $A$ and $\left(\psi_{X},\pi_{A}\right):\left(X,A\right)\rightarrow\mathcal{O}\left(X\right)$
is the universal covariant representation, then $\zeta\in X^{\otimes m}$
is \emph{non-returning} if for all $1\leq n<m$ and $\xi\in X^{\otimes n}$,
\[
\psi_{X}^{\otimes m}\left(\zeta\right)^{*}\psi_{X}^{\otimes n}\left(\xi\right)\psi_{X}^{\otimes m}\left(\zeta\right)=0
\]
\end{defn}
\begin{defn}[{cf. \cite[Lemma 3.7]{GRAPHALGS}}]
\label{def:non-returning}For $\alpha\in\mathrm{Ord}$ and $n<\omega$,
$e\in\Lambda^{\omega^{\alpha}\cdot n}$ is \emph{non-returning} if
for all $f\in\Lambda$ with $\omega^{\alpha}\leq d(f)<\omega^{\alpha}\cdot n$
and $\beta<\omega^{\alpha}$, $fe^{\beta}\not\in e\Lambda$.
\end{defn}
In \lemref{non-returning-path-vector} we will show every non-returning
path of $\Lambda^{\omega^{\alpha}\cdot m}$ is associated with a non-returning
vector in $X_{\alpha}^{\otimes m}$.

\begin{defn}[{\cite[Condition (S)]{CKU4CPA}}]
A $\mathrm{C}^{*}$-correspondence $X$ over $A$ satisfies condition
(S) if for all $a\in A$ with $a\geq0$, all $n\in\mathbb{N}$, and
all $\varepsilon>0$ there is $m>n$ and non-returning $\zeta\in X^{\otimes m}$
with $\|\zeta\|=1$ such that
\[
\left\Vert \left\langle \zeta,a\zeta\right\rangle \right\Vert >\left\Vert a\right\Vert -\varepsilon
\]
\end{defn}
Returning to the context of ordinal graphs, for $\alpha\in\mathrm{Ord}$,
let $\left(\mu_{\alpha},\eta_{\alpha}\right):\left(X_{\alpha},\mathcal{A}_{\alpha}\right)\rightarrow\mathcal{O}\left(X_{\alpha}\right)$
be the universal covariant representation. For the rest of the section,
let $\left\{ T_{e}:e\in\Lambda_{\alpha+1}\right\} $ be the generators
for $\mathcal{O}\left(\Lambda_{\alpha+1}\right)$ and $\left\{ S_{e}:e\in\Lambda_{\alpha}\right\} $
be the generators for $\mathcal{O}\left(\Lambda_{\alpha}\right)$.
\begin{defn}
\label{def:chi_def}For $e\in\Lambda$ with $\omega^{\alpha}\cdot n\leq d\left(e\right)<\omega^{\alpha}\cdot(n+1)$,
define $\chi_{e}\in X_{\alpha}^{\otimes n}$ so that
\[
\chi_{e}=T_{f_{1}}\otimes T_{f_{2}}\otimes\ldots\otimes T_{f_{n}}T_{g}
\]
where $d\left(f_{k}\right)=\omega^{\alpha}$, $d\left(g\right)<\omega^{\alpha}$,
and $e=f_{1}f_{2}\ldots f_{n}g$.
\end{defn}
By \lemref{3-20}, the representation of $e$ as $f_{1}f_{2}\dots f_{n}g$
exists and is unique, hence $\chi_{e}$ is well-defined.
\begin{prop}
The family $\left\{ \mu_{\alpha}^{\otimes n}\left(\chi_{e}\right):n\in\mathbb{N},e\in\Lambda,\omega^{\alpha}\cdot n\leq d(e)<\omega^{\alpha}\cdot(n+1)\right\} $
is a Cuntz-Krieger $\Lambda_{\alpha+1}$-family.
\end{prop}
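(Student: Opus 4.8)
The plan is to show that the operators $R_e:=\mu_\alpha^{\otimes n(e)}(\chi_e)$, where $n(e)$ denotes the unique $n<\omega$ with $\omega^\alpha\cdot n\le d(e)<\omega^\alpha\cdot(n+1)$, satisfy the four relations of \corref{gen-relations} for the ordinal graph $\Lambda_{\alpha+1}$; by that corollary this is precisely what it means for $\{R_e:e\in\Lambda_{\alpha+1}\}$ to be a Cuntz-Krieger $\Lambda_{\alpha+1}$-family. Two bookkeeping remarks: if $d(w)<\omega^\alpha$ then $n(w)=0$, $\chi_w=T_w\in\mathcal{A}_\alpha$, and $\mu_\alpha^{\otimes 0}=\eta_\alpha$, so $R_w=\eta_\alpha(T_w)$; and if $\omega^\alpha\le d(e)<\omega^\alpha\cdot 2$ then $\chi_e=T_e\in X_\alpha$. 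I will use freely the standard properties of the tensor-power representations $(\mu_\alpha^{\otimes n},\eta_\alpha)$ of $X_\alpha^{\otimes n}$: $\mu_\alpha^{\otimes n}(\zeta)^{*}\mu_\alpha^{\otimes n}(\xi)=\eta_\alpha(\langle\zeta,\xi\rangle)$, $\mu_\alpha^{\otimes n}(\zeta)\mu_\alpha^{\otimes m}(\xi)=\mu_\alpha^{\otimes (n+m)}(\zeta\otimes\xi)$, $\mu_\alpha^{\otimes n}(a\cdot\zeta)=\eta_\alpha(a)\mu_\alpha^{\otimes n}(\zeta)$, and $\mu_\alpha^{\otimes n}(\zeta\cdot a)=\mu_\alpha^{\otimes n}(\zeta)\eta_\alpha(a)$, along with the balancing relation $(x\cdot a)\otimes y=x\otimes(a\cdot y)=x\otimes\varphi_\alpha(a)y$ in the tensor product over $\mathcal{A}_\alpha$.

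The combinatorial core is the identity $\chi_e\otimes\chi_f=\chi_{ef}$ in $X_\alpha^{\otimes(n(e)+n(f))}$ whenever $s(e)=r(f)$. Write $e=a_1\cdots a_n g$ and $f=b_1\cdots b_m h$ with $d(a_i)=d(b_j)=\omega^\alpha$ and $d(g),d(h)<\omega^\alpha$, so that $\chi_e=T_{a_1}\otimes\cdots\otimes T_{a_{n-1}}\otimes T_{a_n g}$ and similarly for $\chi_f$. Since $\omega^\alpha$ is additively indecomposable, $d(g b_1)=d(g)+\omega^\alpha=\omega^\alpha$, so $ef=a_1\cdots a_n(g b_1)b_2\cdots b_m h$ displays $n+m$ blocks of length $\omega^\alpha$ followed by a tail of length $<\omega^\alpha$; hence $n(ef)=n(e)+n(f)$ and $\chi_{ef}=T_{a_1}\otimes\cdots\otimes T_{a_n}\otimes T_{g b_1}\otimes T_{b_2}\otimes\cdots\otimes T_{b_{m-1}}\otimes T_{b_m h}$. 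On the other side, $\chi_e\otimes\chi_f$ differs from this only at the junction leg $T_{a_n g}\otimes T_{b_1}$, and by the balancing relation $T_{a_n g}\otimes T_{b_1}=(T_{a_n}\cdot T_g)\otimes T_{b_1}=T_{a_n}\otimes\varphi_\alpha(T_g)T_{b_1}=T_{a_n}\otimes T_{g b_1}$, so the two agree. The degenerate cases $n(e)=0$ or $n(f)=0$ are handled the same way, now using $\mu_\alpha^{\otimes m}(a\cdot\zeta)=\eta_\alpha(a)\mu_\alpha^{\otimes m}(\zeta)$ or $\mu_\alpha^{\otimes n}(\zeta\cdot a)=\mu_\alpha^{\otimes n}(\zeta)\eta_\alpha(a)$ to absorb $T_e$ (resp. $T_f$) into the appropriate end of the other factor. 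When $s(e)\ne r(f)$, the same junction computation yields $T_g T_{b_1}=0$ (or $T_e T_{b_1}=0$, $T_{a_n g}T_f=0$, $T_e T_f=0$ in the degenerate cases) by \lemref{basic-calcs}, whence $\chi_e\otimes\chi_f=0$.

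With this in hand, relations 1-3 of \corref{gen-relations} follow quickly. Relation 2 ($R_e R_f=R_{ef}$ if $s(e)=r(f)$, and $R_e R_f=0$ otherwise) is immediate from $\mu_\alpha^{\otimes n}(\zeta)\mu_\alpha^{\otimes m}(\xi)=\mu_\alpha^{\otimes(n+m)}(\zeta\otimes\xi)$ together with the previous paragraph. For relation 1, $R_e^{*}R_e=\eta_\alpha(\langle\chi_e,\chi_e\rangle)$, and iterating the inner-product formula on $X_\alpha^{\otimes n(e)}$ while using relation 1 for the generators $T_{a_i}$ (each $\langle T_{a_i},T_{a_i}\rangle=T_{s(a_i)}=T_{r(a_{i+1})}$ acts as the identity on the next leg) collapses $\langle\chi_e,\chi_e\rangle$ to $T_{s(e)}$, so $R_e^{*}R_e=R_{s(e)}$. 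For relation 3, if $f=eg\in e\Lambda$ then $R_f=R_e R_g$ by relation 2, so $R_e R_e^{*}R_f R_f^{*}=R_e(R_e^{*}R_e)R_g R_g^{*}R_e^{*}=R_e R_{s(e)}R_g R_g^{*}R_e^{*}=R_e R_g R_g^{*}R_e^{*}=R_f R_f^{*}$, using relations 1 and 2.

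Finally, for relation 4 let $v\in\Lambda_0$ be $\gamma$-regular. Since $\Lambda_{\alpha+1}$ contains no path of length $\ge\omega^{\alpha+1}$ while $\gamma$-regularity forces $v$ to be $\gamma$-source-regular, we must have $\gamma\le\alpha$. If $\gamma<\alpha$, then $v$ is also $\gamma$-regular in $\Lambda_\alpha$ (only paths of length $<\omega^\alpha$ enter the definition), so relation 4 in $\mathcal{O}(\Lambda_\alpha)$ gives $T_v=\sum_{f\in v\Lambda^{\omega^\gamma}}T_f T_f^{*}$ inside $\mathcal{A}_\alpha$; applying $\eta_\alpha$ and using $R_w=\eta_\alpha(T_w)$ for $d(w)<\omega^\alpha$ yields $R_v=\sum_{f\in v\Lambda^{\omega^\gamma}}R_f R_f^{*}$. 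If $\gamma=\alpha$, then each $f\in v\Lambda^{\omega^\alpha}$ has $\chi_f=T_f$ and $R_f=\mu_\alpha(T_f)$, and by \propref{regular-compact} the element $T_v$ lies in the Katsura ideal $J_\alpha$ with $\varphi_\alpha(T_v)=\sum_{f\in v\Lambda^{\omega^\alpha}}\theta_{T_f,T_f}$; covariance of $(\mu_\alpha,\eta_\alpha)$ then gives $R_v=\eta_\alpha(T_v)=(\mu_\alpha,\eta_\alpha)^{(1)}(\varphi_\alpha(T_v))=\sum_{f\in v\Lambda^{\omega^\alpha}}\mu_\alpha(T_f)\mu_\alpha(T_f)^{*}=\sum_{f\in v\Lambda^{\omega^\alpha}}R_f R_f^{*}$. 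Thus all four relations hold, and \corref{gen-relations} shows $\{R_e:e\in\Lambda_{\alpha+1}\}$ is a Cuntz-Krieger $\Lambda_{\alpha+1}$-family. The main obstacle is the second paragraph: verifying $\chi_e\otimes\chi_f=\chi_{ef}$ (and $n(ef)=n(e)+n(f)$), where the length-$<\omega^\alpha$ tail of $e$ must be transported across a tensor leg via the balancing relation and merged into the leading length-$\omega^\alpha$ block of $f$, with the $n(e)=0$ and $n(f)=0$ cases treated separately.
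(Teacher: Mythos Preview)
Your proof is correct. The core identity $\chi_e\otimes\chi_f=\chi_{ef}$ (together with the degenerate and $s(e)\neq r(f)$ cases) is sound: the balancing computation $T_{a_ng}\otimes T_{b_1}=T_{a_n}\otimes T_{gb_1}$ works because the right action of $\mathcal{A}_\alpha$ on $X_\alpha$ and the left action $\varphi_\alpha$ are both given by multiplication in $\mathcal{O}(\Lambda_{\alpha+1})$, and $d(gb_1)=\omega^\alpha$ since $d(g)<\omega^\alpha$. Relations 1--3 of \corref{gen-relations} then follow, and your case split $\gamma<\alpha$ versus $\gamma=\alpha$ for relation 4 is exactly right (with the observation that $\gamma>\alpha$ cannot occur because $\Lambda_{\alpha+1}$ has no paths of length $\omega^{\alpha+1}$, so no vertex is $\gamma$-source-regular there).

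The paper takes a different and somewhat shorter route: rather than verifying the full \corref{gen-relations} on all of $\Lambda_{\alpha+1}$, it checks the reduced presentation of \propref{gen-relations2}, which only involves the generators $\Lambda_0\cup\bigcup_{\beta\le\alpha}\Lambda^{\omega^\beta}$. For $\beta<\alpha$ those generators lie in $\mathcal{A}_\alpha$ and all relations transfer through $\eta_\alpha$; the only new checks are the four relations when one generator has length $\omega^\alpha$, and these are one-line computations using the representation axioms and covariance. In particular the paper never needs the general multiplicativity $\chi_e\otimes\chi_f=\chi_{ef}$, only the special case of left-multiplying by a short path (which is just $\varphi_\alpha(T_f)\chi_e=\chi_{fe}$). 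Your approach does more work up front but yields that tensor identity as a standalone fact, which is conceptually cleaner and is essentially what drives the subsequent \corref{rep-calcs}; the paper's approach is more economical for this particular proposition because \propref{gen-relations2} has already absorbed the Cantor-normal-form bookkeeping.
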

\begin{proof}
We will show that there are operators satisfiying the relations of
\propref{gen-relations2}. For $e\in\Lambda_{0}\cup\Lambda^{\omega^{\beta}}$
with $\beta<\alpha$, let $U_{e}=\eta_{\alpha}\left(T_{e}\right)$,
and if $e\in\Lambda^{\omega^{\alpha}}$, define $U_{e}=\mu_{\alpha}\left(\chi_{e}\right)$.
Relations 1 through 4 hold for the generators $\left\{ U_{e}:e\in\Lambda_{0}\cup\Lambda^{\omega^{\beta}},\beta<\alpha\right\} $
since $\eta_{\alpha}$ is a {*}-homomorphism. Thus it sufficies to
check the relations for $U_{e}$ with $e\in\Lambda^{\omega^{\alpha}}$.
Relation 1 follows because
\[
U_{e}^{*}U_{e}=\mu_{\alpha}\left(\chi_{e}\right)^{*}\mu_{\alpha}\left(\chi_{e}\right)=\eta_{\alpha}\left(\left\langle \chi_{e},\chi_{e}\right\rangle \right)=\eta_{\alpha}\left(T_{e}^{*}T_{e}\right)=\eta_{\alpha}\left(T_{s(e)}\right)=U_{s\left(e\right)}
\]
For relation 2, suppose $f\in\Lambda_{0}\cup\Lambda^{\omega^{\beta}}$
for $\beta<\alpha$. Then if $s(f)=r(e)$,
\[
U_{f}U_{e}=\eta_{\alpha}\left(T_{f}\right)\mu_{\alpha}\left(\chi_{e}\right)=\mu_{\alpha}\left(\varphi_{\alpha}\left(T_{f}\right)\chi_{e}\right)=\mu_{\alpha}\left(\chi_{fe}\right)=U_{fe}
\]
There are 2 cases for relation 3. First we suppose that $f\in\Lambda_{0}\cup\Lambda^{\omega^{\beta}}$,
$\beta<\alpha$, and $e\Lambda\cap f\Lambda=\emptyset$, in which
case
\[
U_{f}^{*}U_{e}=\eta_{\alpha}\left(T_{f}^{*}\right)\mu_{\alpha}\left(\chi_{e}\right)=\mu_{\alpha}\left(\varphi_{\alpha}\left(T_{e}^{*}\right)\chi_{e}\right)=\mu_{\alpha}\left(0\right)=0
\]
For the other case, suppose $f\in\Lambda^{\omega^{\alpha}}$ and $e\Lambda\cap f\Lambda=\emptyset$.
Then
\[
U_{f}^{*}U_{e}=\mu_{\alpha}\left(\chi_{f}\right)^{*}\mu_{\alpha}\left(\chi_{e}\right)=\eta_{\alpha}\left(\left\langle \chi_{f},\chi_{e}\right\rangle \right)=\eta_{\alpha}\left(T_{f}^{*}T_{e}\right)=\eta_{\alpha}\left(0\right)=0
\]
Finally, relation 4 follows from covariance of $\left(\mu_{\alpha},\eta_{\alpha}\right)$.
In particular by \propref{regular-compact}, if $v$ is $\alpha$
regular, $T_{v}\in J_{\alpha}$. Then by covariance,
\begin{align*}
\eta_{\alpha}\left(T_{v}\right) & =U_{v}=\left(\mu_{\alpha},\eta_{\alpha}\right)^{(1)}\left(\varphi_{\alpha}\left(T_{v}\right)\right)=\left(\mu_{\alpha},\eta_{\alpha}\right)^{(1)}\left(\sum_{f\in v\Lambda^{\omega^{\alpha}}}\theta_{\chi_{f},\chi_{f}}\right)\\
 & =\sum_{f\in v\Lambda^{\omega^{\alpha}}}\left(\mu_{\alpha},\eta_{\alpha}\right)^{(1)}\left(\theta_{\chi_{f},\chi_{f}}\right)=\sum_{f\in v\Lambda^{\omega^{\alpha}}}\mu_{\alpha}\left(\chi_{f}\right)\mu_{\alpha}\left(\chi_{f}\right)^{*}=\sum_{f\in v\Lambda^{\omega^{\alpha}}}U_{f}U_{f}^{*}
\end{align*}
\end{proof}
\begin{defn}
Let $j_{\alpha}:\mathcal{O}\left(\Lambda_{\alpha+1}\right)\rightarrow\mathcal{O}\left(X_{\alpha}\right)$
be the surjective {*}-homomorphism induced by the universal property
of $\mathcal{O}\left(\Lambda_{\alpha+1}\right)$.

Note then that for $e\in\Lambda^{\omega^{\alpha}}$, $j_{\alpha}\left(T_{e}\right)=\mu_{\alpha}\left(\chi_{e}\right)$,
and for $e\in\Lambda_{\alpha}$, $j_{\alpha}\left(T_{e}\right)=\eta_{\alpha}\left(T_{e}\right)$.
\end{defn}
\begin{cor}
\label{cor:rep-calcs}If $n,m\in\mathbb{N}$ with $\omega^{\alpha}\cdot n\leq d(e)<\omega^{\alpha}\cdot(n+1)$
and $\omega^{\alpha}\cdot m\leq d(f)<\omega^{\alpha}\cdot(m+1)$,
then 
\[
\mu_{Y_{\alpha}}^{\otimes n}\left(\chi_{e}\right)^{*}\mu_{Y_{\alpha}}^{\otimes m}\left(\chi_{f}\right)=\begin{cases}
\mu_{\alpha}^{\otimes(m-n)}\left(\chi_{g}\right) & f=eg\\
\mu_{\alpha}^{\otimes(n-m)}\left(\chi_{g}\right)^{*} & e=fg\\
0 & e\Lambda\cap f\Lambda=\emptyset
\end{cases}
\]
\end{cor}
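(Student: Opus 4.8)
The plan is to derive all three cases from \lemref{basic-calcs}(d) by transporting the products $T_{e}^{*}T_{f}$ through the surjective $*$-homomorphism $j_{\alpha}$, after first upgrading the values $j_{\alpha}(T_{e})=\mu_{Y_{\alpha}}(\chi_{e})$ (for $e\in\Lambda^{\omega^{\alpha}}$) and $j_{\alpha}(T_{e})=\eta_{\alpha}(T_{e})$ (for $e\in\Lambda_{\alpha}$) to arbitrary paths. The claim is that $j_{\alpha}(T_{e})=\mu_{Y_{\alpha}}^{\otimes n}(\chi_{e})$ whenever $\omega^{\alpha}\cdot n\le d(e)<\omega^{\alpha}\cdot(n+1)$. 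To see this, factor $e=f_{1}f_{2}\cdots f_{n}g$ with $d(f_{k})=\omega^{\alpha}$ and $d(g)<\omega^{\alpha}$, which is possible by repeated unique factorization (cf.\ \lemref{3-20}). Relation~2 of \corref{gen-relations} gives $T_{e}=T_{f_{1}}T_{f_{2}}\cdots T_{f_{n}}T_{g}$, so since $j_{\alpha}$ is a $*$-homomorphism, $j_{\alpha}(T_{e})=j_{\alpha}(T_{f_{1}})\cdots j_{\alpha}(T_{f_{n}})j_{\alpha}(T_{g})=\mu_{Y_{\alpha}}(T_{f_{1}})\cdots\mu_{Y_{\alpha}}(T_{f_{n}})\eta_{\alpha}(T_{g})$, using $\chi_{f_{k}}=T_{f_{k}}$. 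On the other hand $\chi_{e}=T_{f_{1}}\otimes\cdots\otimes T_{f_{n-1}}\otimes(T_{f_{n}}\cdot T_{g})$, so the defining property of the tensor-power representation together with $\mu_{Y_{\alpha}}(x\cdot a)=\mu_{Y_{\alpha}}(x)\eta_{\alpha}(a)$ gives $\mu_{Y_{\alpha}}^{\otimes n}(\chi_{e})=\mu_{Y_{\alpha}}(T_{f_{1}})\cdots\mu_{Y_{\alpha}}(T_{f_{n}})\eta_{\alpha}(T_{g})$, the same expression. (For $n=0$ this is just the already-recorded identity $j_{\alpha}(T_{e})=\eta_{\alpha}(T_{e})=\mu_{Y_{\alpha}}^{\otimes 0}(\chi_{e})$.)

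With this in hand I would compute
\[
\mu_{Y_{\alpha}}^{\otimes n}(\chi_{e})^{*}\,\mu_{Y_{\alpha}}^{\otimes m}(\chi_{f})=j_{\alpha}(T_{e})^{*}j_{\alpha}(T_{f})=j_{\alpha}\!\left(T_{e}^{*}T_{f}\right),
\]
and apply \lemref{basic-calcs}(d). By \lemref{join-paths} the three possibilities $e\Lambda\cap f\Lambda=\emptyset$, $f\in e\Lambda$, $e\in f\Lambda$ are exhaustive. If $e\Lambda\cap f\Lambda=\emptyset$ then $T_{e}^{*}T_{f}=0$ and $j_{\alpha}(0)=0$. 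If $f=eg$ then $T_{e}^{*}T_{f}=T_{g}$, and a short ordinal computation --- writing $d(e)=\omega^{\alpha}\cdot n+r_{e}$ and $d(f)=\omega^{\alpha}\cdot m+r_{f}$ with $r_{e},r_{f}<\omega^{\alpha}$, left-cancelling, and using that $r_{e}+\omega^{\alpha}=\omega^{\alpha}$ since $\omega^{\alpha}$ is additively indecomposable --- shows $d(g)=\omega^{\alpha}\cdot(m-n)+r_{f}$, hence $\omega^{\alpha}\cdot(m-n)\le d(g)<\omega^{\alpha}\cdot(m-n+1)$ and the first paragraph gives $j_{\alpha}(T_{g})=\mu_{Y_{\alpha}}^{\otimes(m-n)}(\chi_{g})$. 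The remaining case $e=fg$ is symmetric: $T_{e}^{*}T_{f}=T_{g}^{*}$ and $j_{\alpha}(T_{g}^{*})=j_{\alpha}(T_{g})^{*}=\mu_{Y_{\alpha}}^{\otimes(n-m)}(\chi_{g})^{*}$.

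I expect the only genuine content beyond routine bookkeeping to be the first paragraph --- matching the right action on the last tensor leg of $\chi_{e}$ with multiplication by $T_{g}$ inside $\mathcal{O}(\Lambda_{\alpha+1})$, and checking that the factorization $e=f_{1}\cdots f_{n}g$ is exactly the one encoded by $\chi_{e}$. The ordinal arithmetic identifying the tensor degree of $g$ as $|m-n|$ is minor, but it is the step most easily gotten wrong, so I would spell it out. Everything else follows directly from $j_{\alpha}$ being a $*$-homomorphism and from \lemref{basic-calcs}(d).
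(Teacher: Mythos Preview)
Your proof is correct and follows essentially the same route as the paper: push $T_{e}^{*}T_{f}$ through the $*$-homomorphism $j_{\alpha}$ and invoke \lemref{basic-calcs}(d). You actually supply more detail than the paper does---the paper's proof simply asserts $j_{\alpha}(T_{e})=\mu_{\alpha}^{\otimes n}(\chi_{e})$ for general $e$ and that $j_{\alpha}(T_{g})=\mu_{\alpha}^{\otimes(m-n)}(\chi_{g})$ without spelling out the factorization argument or the ordinal arithmetic, both of which you handle explicitly. One tiny imprecision: your formula $d(g)=\omega^{\alpha}\cdot(m-n)+r_{f}$ is only literally correct when $m>n$ (when $m=n$ one gets $d(g)=-r_{e}+r_{f}$ instead), but the conclusion $\omega^{\alpha}\cdot(m-n)\le d(g)<\omega^{\alpha}\cdot(m-n+1)$ holds in both cases, so nothing is lost.
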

\begin{proof}
Fix $n,m\in\mathbb{N}$ with $\omega^{\alpha}\cdot n\leq d(e)<\omega^{\alpha}\cdot(n+1)$
and $\omega^{\alpha}\cdot m\leq d(f)<\omega^{\alpha}\cdot(m+1)$.
By \lemref{basic-calcs},
\begin{align*}
j_{\alpha}\left(T_{e}^{*}T_{f}\right) & =j_{\alpha}\left(T_{e}\right)^{*}j_{\alpha}\left(T_{f}\right)=\mu_{\alpha}^{\otimes n}\left(\chi_{e}\right)^{*}\mu_{\alpha}^{\otimes m}\left(\chi_{f}\right)\\
 & =\begin{cases}
j_{\alpha}\left(T_{g}\right) & f=eg\\
j_{\alpha}\left(T_{g}\right)^{*} & e=fg\\
0 & e\Lambda\cap f\Lambda=\emptyset
\end{cases}\\
 & =\begin{cases}
\mu_{\alpha}^{\otimes(m-n)}\left(\chi_{g}\right) & f=eg\\
\mu_{\alpha}^{\otimes(n-m)}\left(\chi_{g}\right)^{*} & e=fg\\
0 & e\Lambda\cap f\Lambda=\emptyset
\end{cases}
\end{align*}
\end{proof}
\begin{lem}
\label{lem:non-return-diff-mult}Let $(X,\varphi)$ be a $C^{*}$-correspondence
over $A$ with universal covariant representation $\left(\psi_{X},\pi_{A}\right):\left(X,A\right)\rightarrow\mathcal{O}\left(X\right)$.
If $\zeta\in X^{\otimes m}$ is non-returning, then for all $1\leq n<m$,
$\xi\in X^{\otimes n}$, and $a,a',b,b'\in A$,
\[
\psi_{X}^{\otimes m}\left(\varphi_{m}\left(a\right)\zeta b\right)^{*}\psi_{X}^{\otimes n}\left(\xi\right)\psi_{X}^{\otimes m}\left(\varphi_{m}\left(a'\right)\zeta b'\right)=0
\]
\end{lem}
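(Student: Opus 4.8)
The plan is to move the left actions $\varphi_m(a),\varphi_m(a')$ and the right multiplications by $b,b'$ out of the arguments of $\psi_X^{\otimes m}$, absorbing them into the coefficient algebra via $\pi_A$, and thereby reduce the statement to the defining property of a non-returning vector.

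First I would record the identities coming from $(\psi_X,\pi_A)$ being a representation, applied on tensor powers: for $c\in A$ and any $\zeta\in X^{\otimes m}$,
\[
\psi_X^{\otimes m}\bigl(\varphi_m(c)\zeta\bigr)=\pi_A(c)\,\psi_X^{\otimes m}(\zeta),\qquad \psi_X^{\otimes m}(\zeta\cdot c)=\psi_X^{\otimes m}(\zeta)\,\pi_A(c),
\]
together with $\pi_A(c)^*=\pi_A(c^*)$. The first two follow from the fact that on $X^{\otimes m}$ the left action $\varphi_m(c)$ acts by $\varphi(c)$ on the first leg and the right action by $c$ acts on the last leg, combined with $\psi_X(c\cdot x)=\pi_A(c)\psi_X(x)$, $\psi_X(x\cdot c)=\psi_X(x)\pi_A(c)$ and linearity/continuity. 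Consequently
\[
\psi_X^{\otimes m}\bigl(\varphi_m(a)\zeta b\bigr)=\pi_A(a)\,\psi_X^{\otimes m}(\zeta)\,\pi_A(b),
\]
and similarly with $a',b'$ in place of $a,b$.

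Next I would substitute these into the product and regroup, obtaining
\[
\pi_A(b)^*\,\psi_X^{\otimes m}(\zeta)^*\,\bigl(\pi_A(a^*)\,\psi_X^{\otimes n}(\xi)\,\pi_A(a')\bigr)\,\psi_X^{\otimes m}(\zeta)\,\pi_A(b').
\]
By the same representation identities applied on $X^{\otimes n}$, the bracketed middle factor equals $\psi_X^{\otimes n}(\xi')$ where $\xi':=\varphi_n(a^*)\,\xi\,a'\in X^{\otimes n}$; note $X^{\otimes n}$ is closed under $\xi\mapsto\varphi_n(a^*)\xi a'$, so this is a genuine element of $X^{\otimes n}$ with $1\le n<m$. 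Since $\zeta$ is non-returning, applying the definition with $\xi'$ in place of $\xi$ gives $\psi_X^{\otimes m}(\zeta)^*\,\psi_X^{\otimes n}(\xi')\,\psi_X^{\otimes m}(\zeta)=0$, and the whole product collapses to $\pi_A(b)^*\cdot 0\cdot\pi_A(b')=0$, as claimed.

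There is essentially no real obstacle here beyond bookkeeping: the only point needing a line of justification is that $\psi_X^{\otimes m}$ intertwines the left and right actions on $X^{\otimes m}$ with multiplication by $\pi_A(\cdot)$, which is immediate from the construction of the internal tensor product and the covariance of $(\psi_X,\pi_A)$. No density or limiting argument is required, since the final identity is purely algebraic once the hypothesis is invoked.
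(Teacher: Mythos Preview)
Your proof is correct and follows essentially the same approach as the paper: pull the actions of $a,a',b,b'$ out of $\psi_X^{\otimes m}$ via the intertwining relations, absorb $a^*$ and $a'$ into the middle factor to obtain $\psi_X^{\otimes n}(\varphi_n(a^*)\xi a')$, and then apply the non-returning hypothesis to this new element of $X^{\otimes n}$. The paper's proof is line-for-line the same computation, just written more tersely without explicitly stating the intertwining identities you recorded.
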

\begin{proof}
Let $1\leq n<m$ and $\xi\in X^{\otimes n}$ be given. Then since
$\varphi_{n}\left(a^{*}\right)\xi a'\in X^{\otimes n}$, 
\begin{align*}
 & \psi_{X}^{\otimes m}\left(\varphi_{m}(a)\zeta b\right)^{*}\psi_{X}^{\otimes n}\left(\xi\right)\psi_{X}^{\otimes m}\left(\varphi_{m}(a')\zeta b'\right)\\
= & \pi_{A}\left(b\right)^{*}\psi_{X}^{\otimes m}\left(\zeta\right)^{*}\pi_{A}\left(a\right)^{*}\psi_{X}^{\otimes n}\left(\xi\right)\pi_{A}\left(a'\right)\psi_{X}^{\otimes m}\left(\zeta\right)\pi_{A}\left(b'\right)\\
= & \pi_{A}\left(b\right)^{*}\psi_{X}^{\otimes m}\left(\zeta\right)^{*}\psi_{X}^{\otimes n}\left(\varphi_{n}\left(a^{*}\right)\xi a'\right)\psi_{X}^{\otimes m}\left(\zeta\right)\pi_{A}\left(b'\right)\\
= & \pi_{A}\left(b\right)^{*}0\pi_{A}\left(b'\right)=0
\end{align*}
\end{proof}
\begin{lem}
\label{lem:non-returning-path-vector}If $e\in\Lambda^{\omega^{\alpha}\cdot m}$
is non-returning, then $\chi_{e}\in X_{\alpha}^{\otimes m}$ is non-returning.
\end{lem}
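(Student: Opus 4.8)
The plan is to unwind the definition of non-returning for the vector $\chi_e \in X_\alpha^{\otimes m}$ and reduce it to the hypothesis that the path $e$ is non-returning in the sense of \defref{non-returning}. By definition, $\chi_e$ is non-returning precisely when for all $1 \leq n < m$ and all $\xi \in X_\alpha^{\otimes n}$,
\[
\mu_\alpha^{\otimes m}\left(\chi_e\right)^* \mu_\alpha^{\otimes n}\left(\xi\right) \mu_\alpha^{\otimes m}\left(\chi_e\right) = 0.
\]
Since the vectors of the form $\chi_f$ for $f \in \Lambda$ with $\omega^\alpha \cdot n \leq d(f) < \omega^\alpha \cdot (n+1)$, together with right multiplication by elements of $\mathcal{B}_\alpha$, span a dense subspace of $X_\alpha^{\otimes n}$ (this follows from the construction of $X_\alpha$ and the fact that $Y_\alpha$ densely spans $X_\alpha$), and since the expression above is continuous in $\xi$, it suffices to verify the identity when $\xi = \chi_f \cdot b$ for such an $f$ and $b \in \mathcal{B}_\alpha$, or more precisely for $\xi$ of the form $\varphi_n(a)\chi_f b$. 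By \lemref{non-return-diff-mult}, once we know $\chi_e$ is non-returning for the "bare" vectors $\chi_f$, the coefficient-decorated versions vanish automatically, so the essential case is $\xi = \chi_f$ with $\omega^\alpha \cdot n \leq d(f) < \omega^\alpha \cdot (n+1)$.

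Next I would use \corref{rep-calcs} to compute the triple product. First, $\mu_\alpha^{\otimes n}(\chi_f)\mu_\alpha^{\otimes m}(\chi_e)$: this is nonzero only when $s(f) = r(e)$, in which case, by relation 2 and the identity $j_\alpha(T_f)j_\alpha(T_e) = j_\alpha(T_{fe})$, it equals $\mu_\alpha^{\otimes(n+m)}(\chi_{fe})$, noting $\omega^\alpha \cdot (n+m) \leq d(fe) = d(f) + d(e) < \omega^\alpha \cdot (n+m+1)$ since $d(f) < \omega^\alpha \cdot (n+1)$ and $\omega^\alpha \cdot m \leq d(e)$ gives $d(f) + \omega^\alpha \cdot m \leq d(f) + d(e)$, while adding $d(e) < \omega^\alpha\cdot(m+1)$... one must check the arithmetic carefully here, but $d(fe)$ lands in the right window. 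Then I would apply \corref{rep-calcs} again to $\mu_\alpha^{\otimes m}(\chi_e)^* \mu_\alpha^{\otimes(n+m)}(\chi_{fe})$: since $d(e) < d(fe)$, this vanishes unless $fe \in e\Lambda$, i.e. $fe = eg$ for some $g$ with $d(g) = -\,d(e) + d(fe) = -\,d(e) + d(f) + d(e)$. Writing $\beta$ for the appropriate "remainder," the point is that $fe \in e\Lambda$ would force a factorization $f e^\beta \in e\Lambda$ of the kind forbidden by \defref{non-returning}, since $\omega^\alpha \leq d(f) < \omega^\alpha \cdot m$ precisely covers the range $\omega^\alpha \leq d(f) < \omega^\alpha \cdot n$ with $n < m$ (the hypothesis says $1 \leq n < m$, so $d(f) < \omega^\alpha \cdot m$).

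The main obstacle, and where I would spend the most care, is matching the combinatorial condition "$fe^\beta \not\in e\Lambda$ for all $\omega^\alpha \leq d(f) < \omega^\alpha \cdot m$ and $\beta < \omega^\alpha$" in \defref{non-returning} with the algebraic condition "$fe \not\in e\Lambda$" that emerges from the correspondence computation. The subtlety is that a general $\xi \in X_\alpha^{\otimes n}$ of the form $\chi_f$ need not have $d(f)$ an exact multiple of $\omega^\alpha$ — it has $\omega^\alpha \cdot n \leq d(f) < \omega^\alpha \cdot(n+1)$ — so when we form $fe$ and ask whether $fe \in e\Lambda$, the "overhang" of $f$ past $\omega^\alpha \cdot n$ interacts with the first $\omega^\alpha$-block of $e$. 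Decomposing $f = f_0 f_1$ with $d(f_0) = \omega^\alpha \cdot n$ and $d(f_1) < \omega^\alpha$, and writing $e = e_1 e'$ with $d(e_1) = \omega^\alpha$, the condition $fe \in e\Lambda$ translates (using unique factorization) into $f_0 \cdot (f_1 \cdot e_1^{\,\gamma}) \in$ something forcing $f_1$ to be an initial segment of $e_1$ and then a returning condition of exactly the form $h e^{\gamma} \in e\Lambda$ with $\omega^\alpha \leq d(h) < \omega^\alpha \cdot m$. So the real work is a careful factorization argument showing that $\mu_\alpha^{\otimes m}(\chi_e)^* \mu_\alpha^{\otimes n}(\chi_f) \mu_\alpha^{\otimes m}(\chi_e) \neq 0$ implies an instance of returning forbidden by the hypothesis; everything else is bookkeeping with \corref{rep-calcs}, \lemref{non-return-diff-mult}, and density.
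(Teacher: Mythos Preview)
Your overall strategy---reduce to a dense spanning set of $X_\alpha^{\otimes n}$, then apply \corref{rep-calcs}---is the same as the paper's. But there is a genuine gap in your reduction step, and your diagnosis of where the difficulty lies is off.

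The invocation of \lemref{non-return-diff-mult} to pass from $\xi=\chi_f\cdot b$ down to ``bare'' $\xi=\chi_f$ does not work. That lemma decorates $\zeta$, not $\xi$, and it \emph{assumes} $\zeta$ is non-returning---which is exactly what you are trying to prove for $\zeta=\chi_e$. Concretely, with $b=T_g^*$ for $g\in\Lambda_\alpha$ you get
\[
\mu_\alpha^{\otimes m}(\chi_e)^*\,\mu_\alpha^{\otimes n}(\chi_f)\,\pi_\alpha(T_g^*)\,\mu_\alpha^{\otimes m}(\chi_e)
=\mu_\alpha^{\otimes m}(\chi_e)^*\,\mu_\alpha^{\otimes n}(\chi_f)\,\mu_\alpha^{\otimes m}(\chi_{e^{d(g)}}),
\]
and the right-hand factor is $\chi_{e^{d(g)}}$, not $\chi_e$. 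So this is \emph{not} an instance of the bare case, and knowing the bare case does not close the argument.

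Relatedly, your ``overhang'' analysis is misdirected. For bare $\xi=\chi_f$ there is no difficulty at all: $d(fe)=d(f)+\omega^\alpha\cdot m=\omega^\alpha\cdot(n+m)$ because the tail $\gamma<\omega^\alpha$ of $d(f)$ is absorbed by $\omega^\alpha\cdot m$, and then $fe=fe^0\notin e\Lambda$ is exactly the non-returning hypothesis with $\beta=0$. The parameter $\beta$ in \defref{non-returning} does not come from the overhang of $f$; it comes from the right coefficient $T_g^*$ acting on $\chi_e$ to produce $\chi_{e^{d(g)}}$.

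The paper's proof simply keeps the coefficient explicit: it takes $\xi=\chi_f\cdot T_g^*$ with $g\in\Lambda_\alpha$ as the spanning element, computes $\mu_\alpha^{\otimes n}(\chi_f\cdot T_g^*)\mu_\alpha^{\otimes m}(\chi_e)=\mu_\alpha^{\otimes(n+m)}(\chi_{fe^{d(g)}})$ when $e\in g\Lambda$ (and $0$ otherwise), and then \corref{rep-calcs} gives $\mu_\alpha^{\otimes m}(\chi_e)^*\mu_\alpha^{\otimes(n+m)}(\chi_{fe^{d(g)}})=0$ because $fe^{d(g)}\notin e\Lambda$ with $\beta=d(g)<\omega^\alpha$. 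No further factorization gymnastics are needed.
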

\begin{proof}
Let $1\leq n<m$ be given. Since $X_{\alpha}^{\otimes n}$ is generated
by 
\[
\left\{ \chi_{f}\cdot T_{g}^{*}:g\in\Lambda_{\alpha},f\in\Lambda,\omega^{\alpha}\cdot n\leq d(f)<\omega^{\alpha}\cdot(n+1)\right\} 
\]
it suffices to check $\mu_{\alpha}^{\otimes m}\left(\chi_{e}\right)^{*}\mu_{\alpha}^{\otimes n}\left(\chi_{f}\cdot T_{g}^{*}\right)\mu_{\alpha}^{\otimes m}\left(\chi_{e}\right)=0$.
If 
\[
\mu_{\alpha}^{\otimes n}\left(\chi_{f}\cdot T_{g}^{*}\right)\mu_{\alpha}^{\otimes m}\left(\chi_{e}\right)=\mu_{\alpha}^{\otimes n}\left(\chi_{f}\right)\mu_{\alpha}^{\otimes m}\left(\varphi_{\alpha}\left(T_{g}^{*}\right)\chi_{e}\right)\not=0
\]
then $e\in g\Lambda$, $s(f)=r\left(e^{d(g)}\right)$, and
\[
\mu_{\alpha}^{\otimes n}\left(\chi_{f}\right)\mu_{\alpha}^{\otimes m}\left(\chi_{e^{d(g)}}\right)=\mu_{\alpha}^{\otimes\left(n+m\right)}\left(\chi_{fe^{d(g)}}\right)
\]
Since $e$ is non-returning, $fe^{d(g)}\not\in e\Lambda$. Thus by
\corref{rep-calcs},
\[
\mu_{\alpha}^{\otimes m}\left(\chi_{e}\right)^{*}\mu_{\alpha}^{\otimes n}\left(\chi_{f}\cdot T_{g}^{*}\right)\mu_{\alpha}^{\otimes m}\left(\chi_{e}\right)=\mu_{\alpha}^{\otimes m}\left(\chi_{e}\right)^{*}\mu_{\alpha}^{\otimes(n+m)}\left(\chi_{fe^{d(g)}}\right)=0
\]
\end{proof}
\begin{lem}
\label{lem:nonreturning-submodule}Let $\left(X,\varphi\right)$ be
a correspondence over a $C^{*}$-algebra $A$. If $\zeta\in X^{\otimes m}$
is non-returning, then
\[
K=\overline{\text{\ensuremath{\mathrm{span}}}}\left\{ \varphi\left(a\right)\zeta\cdot b:a,b\in A\right\} 
\]
is a closed Hilbert submodule of $X^{\otimes m}$, and each $\eta\in K$
is non-returning.
\end{lem}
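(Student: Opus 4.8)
The plan is to verify the two assertions in turn: first that $K$ is a closed Hilbert submodule of $X^{\otimes m}$, and then that every vector in $K$ is non-returning. The first part is routine and I would dispatch it quickly. Writing $K_{0}=\mathrm{span}\{\varphi(a)\zeta\cdot b:a,b\in A\}$, the identity $(\varphi(a)\zeta\cdot b)\cdot c=\varphi(a)\zeta\cdot(bc)$ shows that $K_{0}$ is a right $A$-submodule of $X^{\otimes m}$, and since right multiplication by a fixed $c$ is bounded by $\|c\|$, the closure $K=\overline{K_{0}}$ is again a right $A$-submodule. It inherits the $A$-valued inner product from $X^{\otimes m}$, and being a norm-closed subspace of a complete space it is itself complete, hence a Hilbert $A$-module. (One even has $\varphi(c)\varphi(a)\zeta\cdot b=\varphi(ca)\zeta\cdot b\in K_{0}$, so $K$ is in fact a sub-correspondence, but only the right-module structure is needed here.)

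For the substantive claim I would fix $n$ with $1\le n<m$ and $\xi\in X^{\otimes n}$ and study the function
\[
\Phi(\eta)=\psi_{X}^{\otimes m}(\eta)^{*}\,\psi_{X}^{\otimes n}(\xi)\,\psi_{X}^{\otimes m}(\eta),\qquad\eta\in X^{\otimes m}.
\]
The first step is to observe that $\Phi$ is continuous: the map $\psi_{X}^{\otimes m}$ is contractive because $\|\psi_{X}^{\otimes m}(\eta)\|^{2}=\|\pi_{A}(\langle\eta,\eta\rangle)\|\le\|\eta\|^{2}$, and multiplication and the adjoint in $\mathcal{O}(X)$ are continuous; the one point to watch is that $\Phi$ is quadratic, not linear, in $\eta$, so the two occurrences $\psi_{X}^{\otimes m}(\eta)$ and $\psi_{X}^{\otimes m}(\eta)^{*}$ must be controlled simultaneously. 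The second step is to evaluate $\Phi$ on $K_{0}$: for $\eta=\sum_{i=1}^{k}\varphi(a_{i})\zeta\cdot b_{i}$, linearity of $\psi_{X}^{\otimes m}$ and distributivity give
\[
\Phi(\eta)=\sum_{i,i'=1}^{k}\psi_{X}^{\otimes m}(\varphi(a_{i})\zeta b_{i})^{*}\,\psi_{X}^{\otimes n}(\xi)\,\psi_{X}^{\otimes m}(\varphi(a_{i'})\zeta b_{i'}),
\]
and each term on the right vanishes by \lemref{non-return-diff-mult}, whose hypothesis is exactly that $\zeta$ is non-returning. Thus $\Phi$ is identically zero on the dense subspace $K_{0}$, and hence, by the continuity established in the first step, identically zero on $K$.

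Since $n$ and $\xi$ were arbitrary, this yields $\psi_{X}^{\otimes m}(\eta)^{*}\psi_{X}^{\otimes n}(\xi)\psi_{X}^{\otimes m}(\eta)=0$ for all $1\le n<m$, all $\xi\in X^{\otimes n}$, and all $\eta\in K$, which is precisely the statement that each $\eta\in K$ is non-returning. I do not expect a genuine obstacle: \lemref{non-return-diff-mult} already absorbs the combinatorial work of eliminating the $\varphi(a)$ and $b$ factors, so what remains is the density-plus-continuity argument, whose only delicate feature is the quadratic (rather than linear) dependence of $\Phi$ on $\eta$.
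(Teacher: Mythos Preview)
Your proof is correct and follows essentially the same approach as the paper: both show $K_{0}$ is a right submodule with closed closure $K$, then use continuity of $\eta\mapsto\psi_{X}^{\otimes m}(\eta)^{*}\psi_{X}^{\otimes n}(\xi)\psi_{X}^{\otimes m}(\eta)$ to reduce to finite sums in $K_{0}$, expand bilinearly, and kill each term via \lemref{non-return-diff-mult}. Your treatment is slightly more explicit about the contractivity of $\psi_{X}^{\otimes m}$ and the quadratic dependence of $\Phi$, but the argument is the same.
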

\begin{proof}
Let $\left(\psi,\pi\right):\left(X,\varphi\right)\rightarrow\mathcal{O}\left(X\right)$
be the universal covariant representation of $X$. Define $Y=\text{\ensuremath{\mathrm{span}}}\left\{ \varphi\left(a\right)\zeta\cdot b:a,b\in A\right\} $.
Then $Y$ is closed under the right action of $A$, hence $Y$ is
a submodule of $X$. Since the right action is continuous, $\overline{Y}=K$
is a closed Hilbert submodule of $X$. For $1\leq n<m$ and $\xi\in X^{\otimes n}$,
the function 
\[
\nu\mapsto\psi^{\otimes m}\left(\nu\right)^{*}\psi^{\otimes n}\left(\xi\right)\psi^{\otimes m}\left(\nu\right)
\]
is continuous. Thus the set of non-returning vectors in $X^{\otimes m}$
is closed, and it suffices to show that every vector in $Y$ is non-returning.
Let $\kappa\in Y$ be arbitrary, and choose $a_{k},b_{k}\in A$ with
\[
\kappa=\sum_{k=1}^{r}\varphi\left(a_{k}\right)\zeta\cdot b_{k}
\]
Since we wish to show $\kappa$ is non-returning, let $1\leq n<m$
and $\xi\in X^{\otimes n}$ be given. Then
\[
\psi^{\otimes m}\left(\kappa\right)^{*}\psi^{\otimes n}\left(\xi\right)\psi^{\otimes m}\left(\kappa\right)=\sum_{j,k=1}^{r}\psi^{\otimes m}\left(\varphi\left(a_{j}\right)\zeta\cdot b_{j}\right)^{*}\psi^{\otimes n}\left(\xi\right)\psi^{\otimes m}\left(\varphi\left(a_{k}\right)\zeta\cdot b_{k}\right)
\]
By \lemref{non-return-diff-mult}, this is $0$, and $\kappa$ is
non-returning.
\end{proof}
We are now in the position to prove a version of the Cuntz-Krieger
uniqueness theorem for ordinal graphs. To do so, we introduce what
we call condition (S) for ordinal graphs, which represents the circumstance
in which each correspondence $X_{\alpha}$ satisfies condition (S).
\begin{defn}
\label{def:alpha-full}A path $e\in\Lambda$ is \emph{$\alpha$-full}
if $e\in\Lambda\backslash\Lambda_{\alpha}$ and for every $v\in\Lambda_{0}$
in the same connected component of $\Lambda_{\alpha}$ as $r\left(e\right)$,
there exists $\beta<\omega^{\alpha}$ and $f\in\Lambda_{\alpha}$
such that $s\left(f\right)=r\left(e^{\beta}\right)$ and $r\left(f\right)=v$.
\end{defn}
\begin{rem}
Since the connected componenets of $\Lambda_{0}$ are the vertices,
every path $e\in\Lambda\backslash\Lambda_{0}$ is $0$-full. In particular,
all edges in a directed graph are $0$-full.
\end{rem}
\begin{defn}
\label{def:condition-S}An ordinal graph $\Lambda$ satisfies condition
(S) if for every $\alpha\in\mathrm{Ord}$ such that $\Lambda^{\omega^{\alpha}}\not=\emptyset$,
every connected component $F$ of $\Lambda_{\alpha}$, and every $n\in\mathbb{N}$
there exists non-returning, $\alpha$-full $f\in\Lambda$ with $r\left(f\right)\in F$
and $\omega^{\alpha}\cdot n\leq d\left(f\right)<\omega^{\alpha+1}$.
\end{defn}
\begin{rem}
If $\Lambda$ satisfies condition (S) and $\Lambda^{\omega^{\alpha}}\not=\emptyset$,
then for each $v\in\Lambda_{0}$ there exists $f\in\Lambda_{\alpha}$,
$\alpha$-full $e\in\Lambda$, and $\beta<\omega^{\alpha}$ such that
$s\left(f\right)=r\left(e^{\beta}\right)$ and $r\left(f\right)=v$.
Then $r\left(fe^{\beta}\right)=v$ and $d\left(fe^{\beta}\right)=d\left(f\right)-\beta+d\left(e\right)\geq\omega^{\alpha}$.
Thus $\Lambda$ has no $\alpha$-sources.
\end{rem}
\begin{thm}[Cuntz-Krieger Uniqueness Theorem for Ordinal Graphs]
\label{thm:Cuntz-Krieger-Uniqueness-Theorem}If $\Lambda$ is an
ordinal graph with no 1-regular vertices which satisfies condition
(S) and $\Lambda^{\omega^{\alpha}}\not=\emptyset$, then
\begin{enumerate}
\item $J_{\alpha}=0$ if $\alpha\geq1$.
\item $\mathcal{O}\left(X_{\alpha}\right)\cong\mathcal{O}\left(\Lambda_{\alpha+1}\right)$.
\item $X_{\alpha}$ satisfies condition (S).
\item If $\pi:\mathcal{O}\left(\Lambda_{\alpha+1}\right)\rightarrow\mathcal{C}$
is a {*}-homomorphism into a $C^{*}$-algebra $\mathcal{C}$ such
that for every $v\in\Lambda_{0}$, $\pi\left(T_{v}\right)\not=0$,
then $\pi$ is injective.
\end{enumerate}
In particular, if $\Lambda$ satisfies condition (S) and has no 1-regular
vertices, then a {*}-homomorphism $\pi:\mathcal{O}\left(\Lambda\right)\rightarrow\mathcal{C}$
is injective iff $\pi\left(T_{v}\right)\not=0$ for all $v\in\Lambda_{0}$.
\end{thm}
\begin{proof}
The proof is by transfinite induction. Let $\alpha$ with $\Lambda^{\omega^{\alpha}}$
be fixed, $\left\{ S_{e}:e\in\Lambda_{\alpha}\right\} $ be the generators
of $\mathcal{O}\left(\Lambda_{\alpha}\right)$, and $\left\{ T_{e}:e\in\Lambda_{\alpha+1}\right\} $
be the generators of $\mathcal{O}\left(\Lambda_{\alpha+1}\right)$.
Suppose we have Cuntz-Krieger uniqueness for $\alpha$, or more specifically,
that for every ordinal graph $\Gamma$ with $\mathcal{O}\left(\Gamma_{\alpha}\right)$
generated by $\left\{ W_{e}:e\in\Gamma_{\alpha}\right\} $ and every
{*}-homomorphism $\pi:\mathcal{O}\left(\Gamma_{\alpha}\right)\rightarrow\mathcal{C}$
with $\pi\left(W_{v}\right)\not=0$ for all $v\in\Gamma_{0}$, $\pi$
is injective. In particular, we have Cuntz-Krieger uniqueness for
$\Lambda_{\alpha}$. Note that when $\alpha=0$, $\mathcal{O}\left(\Lambda_{\alpha}\right)\cong c_{0}\left(\Lambda_{0}\right)$
has Cuntz-Krieger uniqueness, and this is the base case. First we
will prove statements 1, 2, and 3 in the theorem, and then prove that
$\mathcal{O}\left(\Lambda_{\alpha+1}\right)$ has the Cuntz-Krieger
uniqueness property. Afterwards, we will prove that $\mathcal{O}\left(\Lambda_{\beta}\right)$
has Cuntz-Krieger uniqueness when $\beta$ is a limit ordinal, which
will complete the proof. 

To see 1, suppose $\alpha\geq1$. Note that $\Lambda$ has no $1$-regular
vertices, so by \propref{regular-compact} and \propref{regular-smaller},
$T_{v}\not\in J_{\alpha}$ for every $v\in\Lambda_{0}$. Let $\mathcal{I}=\left(\rho_{\alpha}^{\alpha+1}\right)^{-1}\left(J_{\alpha}\right)\vartriangleleft\mathcal{O}\left(\Lambda_{\alpha}\right)$,
and $q:\mathcal{O}\left(\Lambda_{\alpha}\right)\rightarrow\mathcal{O}\left(\Lambda_{\alpha}\right)/\mathcal{I}$
be the quotient map. Since $\rho_{\alpha}^{\alpha+1}\left(S_{v}\right)=T_{v}$,
$S_{v}\not\in\mathcal{I}$ for all $v\in\Lambda_{0}$. Then $q\left(S_{v}\right)\not=0$
for all $v\in\Lambda_{0}$, and by hypothesis, $q$ is injective.
Therefore $\mathcal{I}=0$, and since $J_{\alpha}\subseteq\rho_{\alpha}^{\alpha+1}\left(\mathcal{O}\left(\Lambda_{\alpha}\right)\right)$,
$J_{\alpha}=\rho_{\alpha}^{\alpha+1}\left(\mathcal{I}\right)=0$. 

For statement 2, we handle the cases when $\alpha=0$ and $\alpha\geq1$
separately. If $\alpha=0$, then $\Lambda_{\alpha+1}=\Lambda_{1}$
is a directed graph, and as in \exaref{graph-correspondence}, \cite[Proposition 3.10]{KATIDEAL}
implies $\mathcal{O}\left(X_{0}\right)\cong\mathcal{O}\left(\Lambda_{1}\right)$.
If $\alpha\geq1$, then $J_{\alpha}=0$. Since $\left(\psi_{\alpha},\pi_{\alpha}\right)$
is a Toeplitz representation of $X_{\alpha}$, this implies $\left(\psi_{\alpha},\pi_{\alpha}\right)$
is Cuntz-Pimsner covariant. Covariance then induces a {*}-homomorphism
$\psi_{\alpha}\times\pi_{\alpha}:\mathcal{O}\left(X_{\alpha}\right)\rightarrow\mathcal{O}\left(\Lambda_{\alpha+1}\right)$.
We claim $\psi_{\alpha}\times\pi_{\alpha}$ is an inverse for $j_{\alpha}$.
Proving this is simple since we may check the composition on generators.
For $e\in\Lambda^{\omega^{\alpha}}$,
\[
\left(\psi_{\alpha}\times\pi_{\alpha}\circ j_{\alpha}\right)\left(T_{e}\right)=\left(\psi_{\alpha}\times\pi_{\alpha}\right)\left(\mu_{\alpha}\left(\chi_{e}\right)\right)=\psi_{\alpha}\left(\chi_{e}\right)=T_{e}
\]
Likewise for $e\in\Lambda_{\alpha}$,
\[
\left(\psi_{\alpha}\times\pi_{\alpha}\circ j_{\alpha}\right)\left(T_{e}\right)=\left(\psi_{\alpha}\times\pi_{\alpha}\right)\left(\eta_{\alpha}\left(T_{e}\right)\right)=\pi_{\alpha}\left(T_{e}\right)=T_{e}
\]
Thus $\psi_{\alpha}\times\pi_{\alpha}\circ j_{\alpha}=\text{id}$,
and $j_{\alpha}$ is injective. Since $\mathcal{O}\left(X_{\alpha}\right)$
is generated by elements of the form $\mu_{\alpha}\left(\chi_{e}\right)$
and $\eta_{\alpha}\left(T_{e}\right)$, $j_{\alpha}$ is also surjective.
Hence $j_{\alpha}$ is an isomorphism.

For statement 3, let $\mathcal{F}$ be the set of all connected components
of $\Lambda_{\alpha}$. Note that $\rho_{\alpha}^{\alpha+1}\left(S_{v}\right)=T_{v}\not=0$
for $v\in\Lambda_{0}$, so $\rho_{\alpha}^{\alpha+1}$ is injective.
Then by \propref{connected-components},
\[
\mathcal{A}_{\alpha}=\rho_{\alpha}^{\alpha+1}\left(\mathcal{O}\left(\Lambda_{\alpha}\right)\right)\cong\bigoplus_{F\in\mathcal{F}}\rho_{\alpha}^{\alpha+1}\left(\mathcal{O}\left(F\right)\right)
\]
Note that the operation above is the $c_{0}$ direct sum. For $a\in\mathcal{A}_{\alpha}$
with $a\geq0$, $n\in\mathbb{N}$, and $\varepsilon>0$ we wish to
show there exists $m\geq n$ and non-returning $\xi\in X_{\alpha}^{\otimes m}$
such that $\|\xi\|=1$ and
\[
\left\Vert \left\langle \xi,\varphi_{\alpha}\left(a\right)\xi\right\rangle _{\alpha}\right\Vert >\|a\|-\varepsilon
\]
We begin by letting $a\in\mathcal{A}_{\alpha}$, $a\geq0$, $n\in\mathbb{N}$,
and $\varepsilon>0$ be given. By the isomorphism above, choose $F\in\mathcal{F}$
such that $\left\Vert a\right\Vert =\left\Vert p_{F}\left(a\right)\right\Vert $,
where $p_{F}:\mathcal{A}_{\alpha}\rightarrow\rho_{\alpha}^{\alpha+1}\left(\mathcal{O}\left(F\right)\right)$
projects onto the $F$-component of the direct sum. Since $\Lambda_{\alpha+1}$
satisfies condition (S), there exists $m\geq n$ and non-returning,
$\alpha$-full $f\in\Lambda_{\alpha+1}$ such that $r\left(f\right)\in F$
and $\omega^{\alpha}\cdot m\leq d\left(f\right)<\omega^{\alpha}\cdot\left(m+1\right)$.
Then by \lemref{non-returning-path-vector}, $\chi_{f}\in X_{\alpha}^{\otimes m}$
is non-returning. Applying \lemref{nonreturning-submodule}, we obtain
the following closed Hilbert submodule $K$ of $X_{\alpha}^{\otimes m}$
consisting only of non-returning vectors.
\[
K=\overline{\text{\ensuremath{\mathrm{span}}}}\left\{ \varphi_{\alpha}\left(a\right)\chi_{f}\cdot b:a,b\in\mathcal{A}_{\alpha}\right\} 
\]
Since $K$ is closed under the left action $\varphi_{\alpha}$, $\varphi_{\alpha}$
restricts to a {*}-homomorphism $\tau:\rho_{\alpha}^{\alpha+1}\left(\mathcal{O}\left(F\right)\right)\rightarrow\mathcal{L}\left(K\right)$.
Here we apply the $\alpha$-fullness of $f$ to show that for each
$w\in\Lambda_{0}\cap F$, $\left(\tau\circ\rho_{\alpha}^{\alpha+1}\right)\left(S_{w}\right)=\tau\left(T_{w}\right)\not=0$.
For $w\in\Lambda_{0}\cap F$, select $\gamma<\omega^{\alpha}$ and
$e\in F$ such that $r\left(f^{\gamma}\right)=s\left(e\right)$ and
$r\left(e\right)=w$. Then $\varphi_{\alpha}\left(T_{e}T_{f_{\gamma}}^{*}\right)\chi_{f}\in K$
is non-returning, and
\[
\tau\left(T_{w}\right)\left(\varphi_{\alpha}\left(T_{e}T_{f_{\gamma}}^{*}\right)\chi_{f}\right)=\tau\left(T_{w}\right)\left(\chi_{ef^{\gamma}}\right)=\chi_{ef^{\gamma}}
\]
Moreover, $\chi_{ef^{\gamma}}\not=0$ because 
\[
\left(\mu_{\alpha}^{\otimes m}\circ j_{\alpha}^{-1}\right)\left(\chi_{ef^{\gamma}}\right)=T_{ef^{\gamma}}\not=0
\]
In particular, $\tau\left(T_{w}\right)\not=0$. 

Since $F$ satisfies condition (S), the induction hypothesis implies
$\tau$ is injective, and in particular isometric. Choose $b\in\mathcal{A}_{\alpha}$
such that $a=b^{*}b$. Then 
\[
\left\Vert b\right\Vert =\left\Vert a\right\Vert ^{1/2}=\left\Vert p_{F}\left(a\right)\right\Vert ^{1/2}=\left\Vert p_{F}\left(b\right)\right\Vert =\left\Vert \left(\tau\circ p_{F}\right)\left(b\right)\right\Vert 
\]
Using continuity of $t\mapsto t^{2}$, we choose $\delta>0$ such
that $\left(\left\Vert b\right\Vert -\delta\right)^{2}>\left\Vert b\right\Vert ^{2}-\varepsilon=\left\Vert a\right\Vert -\varepsilon$.
Then we choose $\xi\in K$ such that $\left\Vert \xi\right\Vert =1$
and $\left\Vert \left(\tau\circ p_{F}\right)\left(b\right)\xi\right\Vert >\left\Vert b\right\Vert -\delta$.
This implies
\begin{align*}
\left\Vert \left\langle \xi,\varphi_{\alpha}\left(a\right)\xi\right\rangle _{\alpha}\right\Vert  & =\left\Vert \left\langle \varphi_{\alpha}\left(b\right)\xi,\varphi_{\alpha}\left(b\right)\xi\right\rangle _{\alpha}\right\Vert \\
 & =\left\Vert \varphi_{\alpha}\left(b\right)\xi\right\Vert ^{2}\\
 & =\left\Vert \left(\tau\circ p_{F}\right)\left(b\right)\xi\right\Vert ^{2}\\
 & >\left(\left\Vert b\right\Vert -\delta\right)^{2}\\
 & >\left\Vert a\right\Vert -\varepsilon
\end{align*}
Thus $X_{\alpha}$ satisfies condition (S).

Now we prove that $\mathcal{O}\left(\Lambda_{\alpha+1}\right)\cong\mathcal{O}\left(X_{\alpha}\right)$
has Cuntz-Krieger uniqueness. Let $\pi:\mathcal{O}\left(\Lambda_{\alpha+1}\right)\rightarrow\mathcal{C}$
be a {*}-homomorphism into a $\mathrm{C}^{*}$-algebra $\mathcal{C}$
such that $\pi\left(T_{v}\right)\not=0$ for all $v\in\Lambda_{0}$.
Then $\left(\pi\circ\rho_{\alpha}^{\alpha+1}\right)\left(S_{v}\right)\not=0$
for each $v\in\Lambda_{0}$. Since $\mathcal{O}\left(\Lambda_{\alpha}\right)$
has Cuntz-Krieger uniqueness by assumption, $\pi\circ\rho_{\alpha}^{\alpha+1}$
is injective, and in particular, $\left.\pi\right|_{\mathcal{A}_{\alpha}}$
is injective. Because $\psi_{\alpha}\times\pi_{\alpha}:\mathcal{O}\left(X_{\alpha}\right)\rightarrow\mathcal{O}\left(\Lambda_{\alpha+1}\right)$
is an isomorphism, $\left.\left(\pi\circ\psi_{\alpha}\times\pi_{\alpha}\right)\right|_{\eta_{\alpha}\left(\mathcal{A}_{\alpha}\right)}$
is injective. Applying \cite[Theorem 3.9]{CKU4CPA}, we see $\pi$
is injective.

Finally, we must show that if $\beta$ is a limit ordinal and for
all $\gamma<\beta$, $\mathcal{O}\left(\Lambda_{\gamma}\right)$ has
Cuntz-Krieger uniqueness, then $\mathcal{O}\left(\Lambda_{\beta}\right)$
has Cuntz-Krieger uniqueness. Let $\left\{ V_{e}:e\in\Lambda\right\} $
be the generators of $\mathcal{O}\left(\Lambda_{\beta}\right)$ and
$\pi:\mathcal{O}\left(\Lambda_{\beta}\right)\rightarrow\mathcal{C}$
be a {*}-homomorphism with $\pi\left(V_{v}\right)\not=0$ for all
$v\in\Lambda_{0}$. By \propref{inductive-system}, we have the following
\[
\mathcal{O}\left(\Lambda_{\beta}\right)=\overline{\bigcup_{\gamma<\beta}\rho_{\gamma}^{\beta}\left(\mathcal{O}\left(\Lambda_{\gamma}\right)\right)}
\]
For $\gamma<\beta$, $\pi\circ\rho_{\gamma}^{\beta}$ is injective,
hence $\ker\pi\cap\rho_{\gamma}^{\beta}\left(\mathcal{O}\left(\Lambda_{\gamma}\right)\right)=0$.
By \cite[II.8.2.4]{Encyclopaedia}, we get $\ker\pi=0$, and $\pi$
is injective.
\end{proof}

\section{Condition (S)}

Given an arbitrary ordinal graph $\Lambda$, it's not clear how to
verify condition (S) in \defref{condition-S}. Our final goal is to
derive a sufficient condition for condition (S) given an ordinal graph
which satsifies condition (V) defined below. We prove that it suffices
to check a class of directed graphs $\left\{ \mathcal{F}_{\alpha}:\alpha\in\mathrm{Ord}\right\} $.
Throughout this section, let $\Lambda$ be a fixed ordinal graph.
Recall that $e_{\alpha}$ and $e^{\alpha}$ defined in \defref{factor}
are the unique paths for which $d\left(e_{\alpha}\right)=\alpha$
and $e=e_{\alpha}e^{\alpha}$.
\begin{defn}
$\Lambda$ satisfies condition (V) if for every $\alpha\in\mathrm{Ord}$
and $e\in\Lambda^{\omega^{\alpha}}$, $e$ is $\alpha$-full.
\end{defn}
\begin{rem}
Since every path in $\Lambda^{\omega^{0}}=\Lambda^{1}$ is $0$-full,
every directed graph satisfies condition (V).
\end{rem}
\begin{defn}
For $\alpha\in\mathrm{Ord}$ and $f,g\in\Lambda^{\omega^{\alpha}}$,
let $f\cong_{\alpha}g$ if and only if there exist $\beta,\gamma<\omega^{\alpha}$
such that $f^{\beta}=g^{\gamma}$.
\end{defn}
Clearly the relation $\cong_{\alpha}$ is symmetric. It is also reflexive
since we may choose $\beta=\gamma=0$. Additionally, if $f,g,h\in\Lambda^{\omega^{\alpha}}$
with $f\cong_{\alpha}g$ and $g\cong_{\alpha}h$, choose $\beta,\gamma,\delta,\epsilon<\omega^{\alpha}$
such that $f^{\beta}=g^{\gamma}$ and $g^{\delta}=h^{\epsilon}$.
Then without loss of generality, assume $\gamma\leq\delta$, in which
case 
\[
f^{\beta-\gamma+\delta}=\left(f^{\beta}\right)^{-\gamma+\delta}=\left(g^{\gamma}\right)^{-\gamma+\delta}=g^{\gamma-\gamma+\delta}=g^{\delta}=h^{\epsilon}
\]
Hence $f\cong_{\alpha}h$. Thus $\cong_{\alpha}$ is an equivalence
relation.

Recall that for each ordinal graph $\Gamma$ we have an equivalence
relation $\sim$ from \defref{connected-components} such that for
$e,f\in\Gamma$, $e\sim f$ if and only if $e$ and $f$ belong to
the same connected component of $\Gamma$. In particular, we have
one such relation $\sim_{\alpha}$ for each $\alpha\in\mathrm{Ord}$
by setting $\Gamma=\Lambda_{\alpha}$.
\begin{defn}
For $\alpha\in\mathrm{Ord}$, define a directed graph $\mathcal{F}_{\alpha}=\left(\mathcal{F}_{\alpha}^{0},\mathcal{F}_{\alpha}^{1},r_{\alpha},s_{\alpha}\right)$
with vertices $\mathcal{F}_{\alpha}^{0}=\Lambda_{\alpha}/\sim_{\alpha}$
and edges $\mathcal{F}_{\alpha}^{1}=\Lambda^{\omega^{\alpha}}/\cong_{\alpha}$.
Define the range and source maps by
\begin{align*}
r_{\alpha}\left(\left[e\right]_{\cong_{\alpha}}\right) & =\left[r\left(e\right)\right]_{\sim_{\alpha}}\\
s_{\alpha}\left(\left[e\right]_{\cong_{\alpha}}\right) & =\left[s\left(e\right)\right]_{\sim_{\alpha}}
\end{align*}
\end{defn}
If $\beta,\gamma<\omega^{\alpha}$ with $e^{\beta}=f^{\gamma}$, then
$s\left(e\right)=s\left(e^{\beta}\right)=s\left(f^{\gamma}\right)=s\left(f\right)$.
Thus the source map $s_{\alpha}$ of $\mathcal{F}_{\alpha}$ is well-defined.
It remains to see the range map is well-defined. Suppose $e^{\beta}=f^{\gamma}$.
Then $d\left(e_{\beta}\right)=\beta<\omega^{\alpha}$, so $s\left(e_{\beta}\right)=r\left(e^{\beta}\right)\sim_{\alpha}r\left(e_{\beta}\right)=r\left(e\right)$.
For the same reason, $r\left(f^{\gamma}\right)\sim_{\alpha}r\left(f\right)$.
Since $r\left(e^{\beta}\right)=r\left(f^{\gamma}\right)$, we see
$r\left(e\right)\sim_{\alpha}r\left(f\right)$. Thus the range map
$r_{\alpha}$ is well-defined.

We analyze $\Lambda$ in terms of entries of cycles in the directed
graphs $\mathcal{F}_{\alpha}$. We use the definitions of cycle and
entry from \cite{GRAPHALGS}, which we record below.
\begin{defn}[{\cite[pg. 16]{GRAPHALGS}}]
If $E=\left(E^{0},E^{1},r_{E},s_{E}\right)$ is a directed graph,
a \emph{cycle} is a path $\mu=\mu_{1}\mu_{2}\ldots\mu_{n}$ with $n\geq1$,
$\mu_{k}\in E^{1}$, $s_{E}\left(\mu_{n}\right)=r_{E}\left(\mu_{1}\right)$
and $s_{E}\left(\mu_{j}\right)\not=s_{E}\left(\mu_{k}\right)$ for
$j\not=k$. An \emph{entry} to a cycle $\mu$ in $E$ is an edge $e\in E^{1}$
such that there is $j$ with $r_{E}\left(\mu_{j}\right)=r_{E}\left(e\right)$
and $\mu_{j}\not=e$.
\end{defn}
\begin{defn}[{\cite[Lemma 3.7]{GRAPHALGS}}]
If $E$ is a directed graph and $e=e_{1}e_{2}\ldots e_{n}$ is a
path in $E$ with $e_{j}\in E^{1}$, then $e$ is \emph{non-returning}
if $e_{j}\not=e_{n}$ for $j<n$.

Note that this definition of non-returning is slightly different than
the definition of a non-returning path in an ordinal graph from \defref{non-returning}.
In the following proof, we apply the above definition for the directed
graphs $\mathcal{F}_{\alpha}$, since it is more convenient, and use
\defref{non-returning} for ordinal graphs. Raeburn proves in \cite[Lemma 3.7]{GRAPHALGS}
that if every cycle of a directed graph has an entry, then for every
$n\in\mathbb{N}$ and $v\in E^{0}$ there exists a non-returning path
$e$ with $r\left(e\right)=v$ whose length is at least $n$. We use
this in the proof of the following theorem.
\end{defn}
\begin{thm}
\label{thm:condition-S}Suppose $\Lambda$ satisfies condition (V)
and for some $\alpha\in\mathrm{Ord}$, $\Lambda_{\alpha}$ satisfies
condition (S). If every cycle in $\mathcal{F}_{\alpha}$ has an entry,
then $\Lambda_{\alpha+1}$ satisfies condition (S).
\end{thm}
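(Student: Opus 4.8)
The plan is to check the two clauses of condition~(S) for $\Lambda_{\alpha+1}$ separately. For condition~(V): whenever $(\Lambda_{\alpha+1})^{\omega^{\gamma}}\neq\emptyset$ one has $\gamma\leq\alpha$, and for such $\gamma$, $(\Lambda_{\alpha+1})_{\gamma}=\Lambda_{\gamma}$ and $(\Lambda_{\alpha+1})^{\omega^{\gamma}}=\Lambda^{\omega^{\gamma}}$; since the requirement of condition~(V) at level $\gamma$ refers only to $\Lambda_{\gamma}$, $\Lambda^{\omega^{\gamma}}$, and paths $e^{\beta}$ with $\beta<\omega^{\gamma}$, condition~(V) for $\Lambda$ gives condition~(V) for $\Lambda_{\alpha+1}$ (levels $\gamma>\alpha$ being vacuous). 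For the non-returning clause: a path $f$ with $\omega^{\beta}\cdot n\leq d(f)<\omega^{\beta+1}$ and $\beta<\alpha$ has length $<\omega^{\alpha}$, lies in $\Lambda_{\alpha}$, and is non-returning in $\Lambda_{\alpha}$ exactly when it is non-returning in $\Lambda$ (\defref{non-returning} for such a path mentions only paths of length $<\omega^{\alpha}$), so this clause for $\beta<\alpha$ is part of the hypothesis that $\Lambda_{\alpha}$ satisfies condition~(S). Hence everything reduces to the clause at level $\beta=\alpha$.

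Assume $\Lambda^{\omega^{\alpha}}\neq\emptyset$ (otherwise there is nothing left), and fix $v\in\Lambda_{0}$, $n\in\mathbb{N}$. Since every cycle in $\mathcal{F}_{\alpha}$ has an entry, \cite[Lemma~3.7]{GRAPHALGS} gives a non-returning path $\mu=\mu_{1}\mu_{2}\cdots\mu_{N}$ in $\mathcal{F}_{\alpha}$ with $r_{\alpha}(\mu_{1})=[v]_{\sim_{\alpha}}$ and $N\geq n$. I would lift $\mu$ to a path $h=e_{1}\cdots e_{N}$ in $\Lambda$ with $e_{j}\in\Lambda^{\omega^{\alpha}}$, $[e_{j}]_{\cong_{\alpha}}=\mu_{j}$, $r(e_{1})=v$, and $s(e_{j})=r(e_{j+1})$, building the $e_{j}$ inductively: set $w_{1}=v$ and $w_{j}=s(e_{j-1})$ for $j>1$, and pick a representative $e_{j}'\in\Lambda^{\omega^{\alpha}}$ of $\mu_{j}$. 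From $r_{\alpha}(\mu_{1})=[v]_{\sim_{\alpha}}$ and the composition rule $r_{\alpha}(\mu_{j})=s_{\alpha}(\mu_{j-1})$, the vertex $r(e_{j}')$ lies in the $\sim_{\alpha}$-class of $w_{j}$, so condition~(V) provides $\beta_{j}<\omega^{\alpha}$ and a path $g_{j}$ in that class with $s(g_{j})=r((e_{j}')^{\beta_{j}})$ and $r(g_{j})=w_{j}$; put $e_{j}=g_{j}(e_{j}')^{\beta_{j}}$, so that $d(e_{j})=d(g_{j})+\omega^{\alpha}=\omega^{\alpha}$, $r(e_{j})=w_{j}$, and $[e_{j}]_{\cong_{\alpha}}=\mu_{j}$ because $(e_{j})^{d(g_{j})}=(e_{j}')^{\beta_{j}}$ forces $e_{j}\cong_{\alpha}(e_{j}')^{\beta_{j}}\cong_{\alpha}e_{j}'$. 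Then $r(h)=v$ and $d(h)=\omega^{\alpha}\cdot N$, so $\omega^{\alpha}\cdot n\leq d(h)<\omega^{\alpha+1}$ and $h\in v\Lambda_{\alpha+1}$.

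The crux is to show $h\in\Lambda^{\omega^{\alpha}\cdot N}$ is non-returning. Suppose otherwise: there are $f\in\Lambda$ with $\omega^{\alpha}\leq d(f)<\omega^{\alpha}\cdot N$, $\gamma<\omega^{\alpha}$, and $p\in\Lambda$ with $fh^{\gamma}=hp$. Write $d(f)=\omega^{\alpha}\cdot k+\delta$ with $1\leq k\leq N-1$ and $\delta<\omega^{\alpha}$. Then $d(f)<\omega^{\alpha}(k+1)\leq\omega^{\alpha}N=d(h)$, so (both $f$ and $h$ being prefixes of $fh^{\gamma}$) $f$ is a prefix of $h$, say $h=fh'$, and left-cancelling $f$ in $fh^{\gamma}=fh'p$ shows $h'$ is a prefix of $h^{\gamma}=(e_{1})^{\gamma}e_{2}\cdots e_{N}$; the absorption law $\delta+\omega^{\alpha}=\omega^{\alpha}$ gives $d(h')=\omega^{\alpha}(N-k)$. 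Now $h'$ is, on one hand, $h=e_{1}\cdots e_{N}$ with its prefix of length $\omega^{\alpha}k+\delta$ removed, i.e.\ $(e_{k+1})^{\delta}e_{k+2}\cdots e_{N}$, and on the other hand the length-$\omega^{\alpha}(N-k)$ prefix of $(e_{1})^{\gamma}e_{2}\cdots e_{N}$, i.e.\ $(e_{1})^{\gamma}e_{2}\cdots e_{N-k}$. Equating these two factorizations into blocks of length $\omega^{\alpha}$ and using unique factorization repeatedly gives $e_{k+i}=e_{i}$ for $2\leq i\leq N-k$ (and $(e_{k+1})^{\delta}=(e_{1})^{\gamma}$); in particular $e_{N}=e_{N-k}$ when $k\leq N-2$, while when $k=N-1$ the one block equality reads $(e_{N})^{\delta}=(e_{1})^{\gamma}$. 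Since $e\cong_{\alpha}e^{\mu}$ for every $\mu<\omega^{\alpha}$, in either case $\mu_{N}=[e_{N}]_{\cong_{\alpha}}$ equals $[e_{N-k}]_{\cong_{\alpha}}=\mu_{N-k}$ (resp.\ $[e_{1}]_{\cong_{\alpha}}=\mu_{1}$), with $N-k<N$ (resp.\ $1<N$), contradicting that $\mu$ is non-returning. Hence $h$ is non-returning, which settles the level-$\alpha$ clause and finishes the proof. I expect the main obstacle to be this last step: keeping the block-by-block bookkeeping straight through the one-sided ordinal subtractions and confirming that the resulting equality of $\Lambda$-paths collapses to a $\cong_{\alpha}$-class equality forbidden by non-returning of $\mu$ in $\mathcal{F}_{\alpha}$.
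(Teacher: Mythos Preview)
Your proof is correct and follows essentially the same approach as the paper: lift a non-returning path in $\mathcal{F}_{\alpha}$ to $\Lambda$ using condition~(V), then show the lift is non-returning by a block-by-block comparison that forces $\mu_{N}=\mu_{j}$ for some $j<N$. You are more explicit than the paper in reducing to the level-$\alpha$ clause (the paper silently assumes the lower levels and condition~(V) carry over), and your contradiction argument routes through the observation that $f$ is a prefix of $h$ and then compares the two expressions for $h'$, whereas the paper writes out the full equation $r_{1}\cdots r_{m}s(p_{1})^{\beta}p_{2}\cdots p_{k}=p_{1}\cdots p_{k}q$ and peels off blocks directly; these are equivalent reorganizations of the same unique-factorization computation.
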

\begin{proof}
We must show for every $v\in\Lambda_{0}$ and $n\in\mathrm{Ord}$
with $n<\omega$, there exists $k$ with $\omega>k\geq n$ and $u\in\Lambda^{\omega^{\alpha}\cdot k}$
such that $r\left(u\right)=v$ and $u$ is non-returning. Since every
cycle of $\mathcal{F}_{\alpha}$ has an entry, choose $k\geq n$ and
a non-returning path $z=f_{1}f_{2}f_{3}\ldots f_{k}$ in $\mathcal{F}_{\alpha}$
with $r_{\alpha}\left(z\right)=\left[v\right]_{\sim_{\alpha}}$ and
$f_{j}\in\mathcal{F}_{\alpha}^{1}$. For each $1\leq j\leq k$, choose
$g_{j}\in\Lambda^{\omega^{\alpha}}$ such that $\left[g_{j}\right]_{\cong_{\alpha}}=f_{j}$.
Then $\left[r\left(g_{1}\right)\right]_{\sim_{\alpha}}=r_{\alpha}\left(f_{j}\right)=r_{\alpha}\left(z\right)=\left[v\right]_{\sim_{\alpha}}$,
hence $r\left(g_{1}\right)\sim_{\alpha}v$. Since $\Lambda$ satisfies
condition (V), there exists $\beta_{1}<\omega^{\alpha}$ and $h_{1}\in\Lambda_{\alpha}$
such that $s\left(h_{1}\right)=r\left(g_{1}^{\beta_{1}}\right)$ and
$r\left(h_{1}\right)=v$. Similarly, $r\left(g_{2}\right)\sim_{\alpha}s\left(g_{1}\right)$,
so we may choose $\beta_{2}<\omega^{\alpha}$ and $h_{2}\in\Lambda_{\alpha}$
such that $s\left(h_{2}\right)=r\left(g_{2}^{\beta_{2}}\right)$ and
$r\left(h_{2}\right)=s\left(g_{1}\right)$. Repeat this process for
$2<j\leq k$ to construct $h_{j}$ and $\beta_{j}<\omega^{\alpha}$
such that $s\left(h_{j}\right)=r\left(g_{j}^{\beta_{j}}\right)$ and
$r\left(h_{j}\right)=s\left(g_{j-1}\right)$. For each $j$ define
$p_{j}=h_{j}g_{j}^{\beta_{j}}$. 

Since $p_{j}^{d\left(h_{j}\right)}=\left(h_{j}g_{j}^{\beta_{j}}\right)^{d\left(h_{j}\right)}=g_{j}^{\beta_{j}}$,
$\left[p_{j}\right]_{\cong_{\alpha}}=f_{j}$. For $j<k$, $s\left(p_{j}\right)=s\left(g_{j}\right)=r\left(h_{j+1}\right)$.
Hence we may define $u=p_{1}p_{2}\ldots p_{k}$. Then $r\left(u\right)=r\left(p_{1}\right)=v$,
and because $d\left(h_{j}\right)<\omega^{\alpha}$ and $\beta_{j}<\omega^{\alpha}$,
\[
d\left(p_{j}\right)=d\left(h_{j}\right)+d\left(g_{j}^{\beta_{j}}\right)=d\left(h_{j}\right)-\beta_{j}+d\left(g_{j}\right)=d\left(h_{j}\right)-\beta_{j}+\omega^{\alpha}=\omega^{\alpha}
\]
Thus $d\left(u\right)=\omega^{\alpha}\cdot k$. We claim $u$ is non-returning
in the sense of \defref{non-returning}. Towards a contradiction,
suppose $u$ is not non-returning. Then there exists $e\in\Lambda$
with $\omega^{\alpha}\leq d\left(e\right)<\omega^{\alpha}\cdot k$
and $\beta<\omega^{\alpha}$ such that $eu^{\beta}\in u\Lambda$.
Let $q\in\Lambda$ with $eu^{\beta}=uq$. Choose $m<k$ such that
$\omega^{\alpha}\cdot m\leq d\left(e\right)<\omega^{\alpha}\cdot\left(m+1\right)$
and $\gamma<\omega^{\alpha}$ with $d\left(e\right)=\omega^{\alpha}\cdot m+\gamma$.
Choose $x_{1},x_{2},\ldots,x_{m}\in\Lambda^{\omega^{\alpha}}$ and
$t\in\Lambda_{\alpha}$ such that $e=x_{1}x_{2}\ldots x_{m}t$. Then
$u^{\beta}=\left(p_{1}\right)^{\beta}p_{2}\ldots p_{k}$, hence
\[
x_{1}x_{2}\ldots x_{m}t\left(p_{1}\right)^{\beta}p_{2}\ldots p_{k}=p_{1}p_{2}\ldots p_{k}q
\]
Now we split into two cases. In the first case, we have $k=m+1$.
Then by unique factorization,
\[
t\left(p_{1}\right)^{\beta}p_{2}\ldots p_{k}=p_{k}q
\]
Then $d\left(t\left(p_{1}\right)^{\beta}\right)=d\left(t\right)-\beta+d\left(p_{1}\right)=d\left(t\right)-\beta+\omega^{\alpha}=\omega^{\alpha}$.
Thus
\[
\left(p_{k}q\right)_{\omega^{\alpha}}=p_{k}=t\left(p_{1}\right)^{\beta}
\]
Therefore $\left(p_{k}\right)^{d\left(t\right)}=\left(p_{1}\right)^{\beta}$,
and $\left[p_{k}\right]_{\cong_{\alpha}}=f_{k}=\left[p_{1}\right]_{\cong_{\alpha}}=f_{1}$.
Since $z$ is a non-returning path in $\mathcal{F}_{\alpha}$, this
is a contradiction. For the other case, suppose $k>m+1$. Similarly,
unique factorization implies
\[
p_{k-m}p_{k-m+1}\ldots p_{k}=p_{k}q
\]
Thus $p_{k-m}=p_{k}$ and $f_{k-m}=f_{k}$, again contradicting the
fact that $z$ is non-returning.
\end{proof}
\begin{rem}
If $\beta$ is a limit ordinal and $\Lambda_{\alpha}$ satisfies condition
(S) for every $\alpha<\beta$, then $\Lambda_{\beta}$ satisfies condition
(S). Thus \thmref{condition-S} tells us that to verify $\Lambda$
satisfies condition (S), it suffices to check $\Lambda$ satisfies
condition (V) and for every $\alpha\in\mathrm{Ord}$, every cycle
of $\mathcal{F}_{\alpha}$ has an entry.
\end{rem}
\begin{example}
\label{exa:two-loops-two-omega}Consider the ordinal graph in \figref{two-loops-two-omega}
generated by one vertex, two paths of length 1, and two paths of length
$\omega$. $\mathcal{O}\left(\Lambda\right)$ is the $\mathrm{C}^{*}$-algebra
generated by two isometries $T_{e},T_{f}$ with $T_{e}T_{e}^{*}+T_{f}T_{f}^{*}=T_{v}=1$
and two isometries $T_{g},T_{h}$ with $T_{e}T_{g}=T_{g}$ and $T_{f}T_{h}=T_{h}$.
Then $\mathcal{F}_{0}$ and $\mathcal{F}_{1}$ each are directed graphs
with one vertex and two edges. $\Lambda$ satisfies condition (V)
since it has only one vertex, and every cycle in $\mathcal{F}_{0}$
and $\mathcal{F}_{1}$ has an entry. Therefore $\Lambda$ satisfies
condition (S). Because $\Lambda$ has no 1-sources and no 1-regular
vertices, \thmref{Cuntz-Krieger-Uniqueness-Theorem} implies $\mathcal{O}\left(\Lambda\right)$
has Cuntz-Krieger uniqueness. If $\pi:\mathcal{O}\left(\Lambda\right)\rightarrow\mathcal{C}$
is a {*}-homomorphism, then either $\pi$ is injective or $T_{v}\in\ker\pi$.
But $T_{v}$ is a unit for $\mathcal{O}\left(\Lambda\right)$, so
in this case $\ker\pi=\mathcal{O}\left(\Lambda\right)$. Thus $\mathcal{O}\left(\Lambda\right)$
is simple.
\begin{figure}[h]
\begin{center}
\begin{tikzpicture}
	\tikzset{every loop/.style={looseness=50}}
	\draw[->] node[circle, fill, inner sep=0, minimum size=4pt, label=above:$v$] (v) {} edge[in=115, out=180, loop] node[left] {$e$} ();
	\draw[->] (v) edge[in=65, out=0, loop] node[right] {$f$} ();
	\tikzset{every loop/.style={looseness=90}}
	\draw[->] (v) edge[in=200, out=250, loop] node[below, text width=2cm, align=center] {$g: eee\dots$ $g=eg$} ();
	\draw[->] (v) edge[in=340, out=290, loop] node[below, text width=2cm, align=center] {$h: fff\dots$ $h=fh$} ();
\end{tikzpicture}
\end{center}

\caption{\label{fig:two-loops-two-omega}The ordinal graph $\Lambda$ in \exaref{two-loops-two-omega}}
\end{figure}
\end{example}
We would like to end with a more complicated example. The previous
examples of ordinal graphs all have path lengths which are bounded
by $\omega^{2}$. Now we introduce an example of an ordinal graph
whose path lengths are bounded by $\omega^{\omega}$.
\begin{example}
\label{exa:complicated}Consider the directed graph in \figref{alpha-connected-component}.
We wish to construct an ordinal graph $\Lambda$ in which for every
$\alpha\in\mathbb{N}$, the connected components of $\mathcal{F}_{\alpha}$
are isomorphic to this graph. We consider an inductive construction,
starting with $\Gamma_{1}=\mathcal{F}_{0}$ as this directed graph.
Then to get $\Gamma_{2}$, we duplicate the previous directed graph
to get both vertices of $\mathcal{F}_{1}$. We then add a path of
length $\omega$ for each edge in $\mathcal{F}_{1}$. Duplicating
this ordinal graph again gives us both vertices of $\mathcal{F}_{2}$,
and adding paths of length $\omega^{2}$ gives us each edge in $\mathcal{F}_{2}$.
Regarding at each step $\Gamma_{k}$ as a subgraph of the ordinal
graph $\Gamma_{k+1}$ and taking the union $\cup_{k\in\mathbb{N}}\Gamma_{k}$,
we obtain an ordinal graph $\Lambda$ whose path lengths are bounded
by $\omega^{\omega}$. Of course, this construction is neither complete
nor formal, as we have not specified how these paths factor. Below
we formally define one choice for $\Lambda$ using generators and
relations.

Define $X=\left\{ x\in\left\{ 0,1\right\} ^{\mathbb{N}}:x^{-1}\left(1\right)\text{ is finite}\right\} $.
For $x\in X$, define $0x\in X$ and $1x\in X$ by
\begin{align*}
\left(0x\right)\left(n\right) & =\begin{cases}
x\left(n-1\right) & n\geq1\\
0 & n=0
\end{cases}\\
\left(1x\right)\left(n\right) & =\begin{cases}
x\left(n-1\right) & n\geq1\\
1 & n=0
\end{cases}
\end{align*}
Let $\Lambda$ be the category generated by $\left\{ v_{x},e_{x}^{\alpha},f_{x}^{\alpha},g_{x}^{\alpha},h_{x}^{\alpha}:x\in X,\alpha\in\mathbb{N}\right\} $
with the following relations:
\begin{enumerate}
\item $v_{0x}=s\left(e_{x}^{0}\right)=r\left(e_{x}^{0}\right)=s\left(g_{x}^{0}\right)=r\left(h_{x}^{0}\right)$
\item $v_{1x}=s\left(f_{x}^{0}\right)=r\left(f_{x}^{0}\right)=r\left(g_{x}^{0}\right)=s\left(h_{x}^{0}\right)$
\item $s\left(e_{x}^{\alpha+1}\right)=s\left(e_{0x}^{\alpha}\right)$
\item $s\left(f_{x}^{\alpha+1}\right)=s\left(f_{1x}^{\alpha}\right)$
\item $s\left(g_{x}^{\alpha+1}\right)=s\left(g_{0x}^{\alpha}\right)$
\item $s\left(h_{x}^{\alpha+1}\right)=s\left(h_{1x}^{\alpha}\right)$
\item $e_{0x}^{\alpha}e_{x}^{\alpha+1}=e_{x}^{\alpha+1}$
\item $f_{1x}^{\alpha}f_{x}^{\alpha+1}=f_{x}^{\alpha+1}$
\item $g_{1x}^{\alpha}h_{1x}^{\alpha}g_{x}^{\alpha+1}=g_{x}^{\alpha+1}$
\item $h_{0x}^{\alpha}g_{0x}^{\alpha}h_{x}^{\alpha+1}=h_{x}^{\alpha+1}$
\end{enumerate}
There is a length functor $d:\Lambda\rightarrow\mathrm{Ord}$ defined
by
\begin{align*}
d\left(v_{x}\right) & =0\\
d\left(e_{x}^{\alpha}\right) & =d\left(f_{x}^{\alpha}\right)=d\left(g_{x}^{\alpha}\right)=d\left(h_{x}^{\alpha}\right)=\omega^{\alpha}
\end{align*}
This makes $\Lambda$ an ordinal graph. Indeed, for each $\alpha\in\mathbb{N}$
the (infinitely many) connected components of $\mathcal{F}_{\alpha}$
are isomorphic to the graph in \figref{alpha-connected-component}.
$\Lambda$ satisfies condition (V), so by \thmref{condition-S}, $\Lambda$
satisfies condition (S).

\begin{figure}[h]
\begin{center}
\begin{tikzpicture}
	\tikzset{every loop/.style={looseness=50}}
	\draw (0, 0) node[circle, fill, inner sep=0, minimum size=4pt] (v) {};
	\draw (2, 0) node[circle, fill, inner sep=0, minimum size=4pt] (w) {};
	\draw[->] (v) edge[in=135, out=225, loop] node[left] {$e$} ();
	\draw[->] (w) edge[in=45, out=315, loop] node[right] {$f$} ();
	\draw[->] (v) edge[in=135, out=45] node[above] {$g$} (w);
	\draw[->] (w) edge[in=315, out=225] node[below] {$h$} (v);
\end{tikzpicture}
\end{center}

\caption{\label{fig:alpha-connected-component}A connected component of $\mathcal{F}_{\alpha}$
for $\Lambda$ defined in \exaref{complicated}}
\end{figure}
\end{example}
\pagebreak{}

\bibliographystyle{plain}
\bibliography{references/refs}

\begin{thebibliography}{10}

\bibitem{Encyclopaedia}
B.~Blackadar.
\newblock {\em Operator algebras}, volume 122 of {\em Encyclopaedia of
  Mathematical Sciences}.
\newblock Springer-Verlag, Berlin, 2006.
\newblock Theory of $C^*$-algebras and von Neumann algebras, Operator Algebras
  and Non-commutative Geometry, III.

\bibitem{CONDUCHEFIBRATIONS}
Jonathan~H. Brown and David~N. Yetter.
\newblock {Discrete Conduch\'e fibrations and $C^*$-algebras}.
\newblock {\em Rocky Mountain Journal of Mathematics}, 47(3):711 -- 756, 2017.

\bibitem{LYDIADEWOLF}
Lydia de~Wolf.
\newblock {\em Development of the theory of Kumjian-Pask fibrations, their path
  groupoids, and their C*-algebras}.
\newblock 2023.
\newblock PhD dissertation, Kansas State University.

\bibitem{CKU4CPA}
Menev\c{s}e Ery\"{u}zl\"{u} and Mark Tomforde.
\newblock A {C}untz-{K}rieger uniqueness theorem for {C}untz-{P}imsner
  algebras.
\newblock 2024.
\newblock (arXiv:2212.00248).

\bibitem{TOPALG}
Neal~J. Fowler and Iain Raeburn.
\newblock The {T}oeplitz algebra of a {H}ilbert bimodule.
\newblock {\em Indiana Univ. Math. J.}, 48(1):155--181, 1999.

\bibitem{KATIDEAL}
Takeshi Katsura.
\newblock A construction of {$C^*$}-algebras from {$C^*$}-correspondences.
\newblock In {\em Advances in quantum dynamics ({S}outh {H}adley, {MA}, 2002)},
  volume 335 of {\em Contemp. Math.}, pages 173--182. Amer. Math. Soc.,
  Providence, RI, 2003.

\bibitem{KATSURA1}
Takeshi Katsura.
\newblock A class of {$C^\ast$}-algebras generalizing both graph algebras and
  homeomorphism {$C^\ast$}-algebras. {I}. {F}undamental results.
\newblock {\em Trans. Amer. Math. Soc.}, 356(11):4287--4322, 2004.

\bibitem{KGRAPH}
Alex Kumjian and David Pask.
\newblock Higher rank graph {$C^\ast$}-algebras.
\newblock {\em New York J. Math.}, 6:1--20, 2000.

\bibitem{GRAPHALGS}
Iain Raeburn.
\newblock {\em Graph algebras}, volume 103 of {\em CBMS Regional Conference
  Series in Mathematics}.
\newblock Conference Board of the Mathematical Sciences, Washington, DC; by the
  American Mathematical Society, Providence, RI, 2005.

\bibitem{CARDINALORDINAL}
Wac{\l}aw Sierpi\'nski.
\newblock {\em Cardinal and ordinal numbers}, volume Vol. 34 of {\em Monografie
  Matematyczne [Mathematical Monographs]}.
\newblock Pa\'nstwowe Wydawnictwo Naukowe (PWN), Warsaw, revised edition, 1965.

\bibitem{LCSC}
Jack Spielberg.
\newblock Groupoids and {$C^*$}-algebras for left cancellative small
  categories.
\newblock {\em Indiana Univ. Math. J.}, 69(5):1579--1626, 2020.

\end{thebibliography}

\end{document}